\tikzstyle{vertex}=[circle, draw, inner sep=0pt, minimum size=4pt]
\newtheorem{theorem}{Theorem}[section]
\newtheorem{proposition}[theorem]{Proposition}
\newtheorem{lemma}[theorem]{Lemma}
\theoremstyle{definition}
\newtheorem{definition}[theorem]{Definition}
\newtheorem{problem}[theorem]{Problem}
\newtheorem{conjecture}[theorem]{Conjecture}
\theoremstyle{remark}
\newtheorem{remark}[theorem]{Remark}
\title[Grothendieck-positive specializations]{Positive specializations of symmetric Grothendieck polynomials} 
\author[Damir Yeliussizov]{Damir Yeliussizov}
\address{KBTU, Almaty, Kazakhstan}
\email{\href{mailto:yeldamir@gmail.com}{yeldamir@gmail.com}}
\begin{document}

\begin{abstract}
It is a classical fundamental result that Schur-positive specializations of the ring of symmetric functions are characterized via totally positive functions whose parametrization describes the Edrei--Thoma theorem.   
In this paper, we study positive specializations of symmetric Grothendieck polynomials,  $K$-theoretic deformations of Schur polynomials.  
\end{abstract}

\maketitle


\section{Introduction}
Characterization of {\it Schur-positive} specializations of the ring of symmetric functions relies on a fundamental result in the theory of {total positivity}, the famous Edrei--Thoma theorem \cite{edrei, thoma}. It became foundational to representation theory of the infinite symmetric group \cite{thoma, vk81, ker, bool}. In combinatorics, positive specializations of Schur functions provide a powerful tool in enumeration 
of plane partitions \cite{sta}. In probability, besides connections with the Plancherel measure on partitions \cite{vkp}, they are important in Schur processes \cite{okr}. 

\vspace{0.5em}

In Schubert calculus, Schur polynomials 
relate to the cohomology of Grassmannians (e.g. \cite{fult}). 
Similarly, $K$-theoretic Schubert calculus gives rise to certain deformations of Schur polynomials. 
These objects are known as {\it Grothendieck polynomials} \cite{las, fk}. They bear many similarities with the Schur basis but have significant differences. For instance, they are {inhomogeneous} as oppose to Schur functions. In {combinatorial $K$-theory}, underlying structures behind such deformations usually become more complicated, see e.g. \cite{buch, buchk, lp, pp, thomasyong2, dy, dy2}. 


\subsection{Overview of main results} In this paper, we study positivity of $K$-theoretic deformations of Schur polynomials. 
Namely, we study {\it positive specializations} for three classes of {\it symmetric Grothendieck polynomials}  (see definitions later):
\begin{itemize}
\item[(F1)] the `positive' basis $\{\tilde G_{\lambda} \}$
\item[(F2)] the usual basis $\{G_{\lambda} \}$ with alternating signs
\item[(F3)] the dual basis $\{g_{\lambda} \}$
\end{itemize}

A main feature of the family (F1) is that its structure constants and monomial expansions are all positive. It turns out that characterization of positive specializations for $\{\tilde G_{\lambda} \}$ is more difficult than for (F2).  
We are able to solve this problem for the families (F1), (F2) via transition to Schur-positive specializations. This might seem surprising, especially for (F1), as Schur functions expand non-positively in $\{\tilde G_{\lambda} \}$. We also show that positive specializations for (F1) characterize {\it boundary} of a certain {\it filtered Young graph}. 
As for the polynomials (F3), we describe a class of positive specializations and conjecture that it is complete. We also discuss how these specializations produce two analogues of the Plancherel measure 
on partitions: one is naturally related to  the {\it corner growth model} in probability (e.g. \cite{joh1, romik}), another is the {\it Plancherel-Hecke measure} \cite{thomasyong}.  

Let us now summarize the background and main results in more detail.

\subsection{The Edrei--Thoma theorem} A fundamental result in the theory of {\it total positivity} is the theorem proved by Edrei \cite{edrei} and Thoma \cite{thoma}, which characterizes totally nonnegative Toeplitz matrices. It was originally conjectured by Schoenberg \cite{scho}.

A matrix is called {\it totally nonnegative} if all its minors are nonnegative reals. 
A formal power series $A(z) = 1 + \sum_{n = 1}^{\infty} a_n z^n \in \mathbb{R}[[z]]$ is a {\it totally positive function} if the associated (infinite, upper triangular) {\it Toeplitz matrix} $[a_{i - j}]_{i,j \ge 0}$ is totally nonnegative (where $a_0 = 1$ and $a_n = 0$ for $n < 0$). The sequence $\{ a_n\}$ is then called {\it P\'olya frequency sequence} and every polynomial $A_N(z) = 1 + \sum_{n = 1}^{N} a_n z^n$ (for $N = 1, 2, \ldots$) has only negative 
real roots, which is the Aissen--Schoenberg--Whitney theorem \cite{asw}.

\begin{theorem}[Edrei--Thoma]\label{ets0}
$A(z) = 1 + \sum_{n = 1}^{\infty} a_n z^n$ 
is a totally positive function if and only if
$$
A(z) = e^{\gamma z} \prod_{n = 1}^{\infty} \frac{1 + \beta_n z}{1 - \alpha_n z}
$$
for nonnegative real parameters
$\{\alpha_n\}$,  $\{\beta_n\}$, 
and $\gamma$ such that $\sum_{n} (\alpha_n + \beta_n) < \infty$. 
\end{theorem}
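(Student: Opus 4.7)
The approach I would take is to reformulate the statement in terms of multiplicative Schur-nonnegative specializations of the ring $\Lambda$ of symmetric functions. Let $\phi : \Lambda \to \mathbb{R}$ be the unique algebra homomorphism with $\phi(h_n) = a_n$ for all $n \geq 0$. By the Jacobi--Trudi identity, every $k \times k$ minor of the Toeplitz matrix $[a_{i-j}]$ equals $\phi(s_{\lambda/\mu})$ for a skew shape $\lambda/\mu$ determined by the chosen rows and columns. Because every skew Schur function is Schur-positive, total positivity of $A(z)$ is equivalent to $\phi$ being Schur-nonnegative, i.e., $\phi(s_\lambda) \geq 0$ for every partition $\lambda$. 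Theorem~\ref{ets0} thus reduces to classifying the multiplicative Schur-nonnegative specializations $\phi$.

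For sufficiency, I would identify each factor type with a familiar Schur-nonnegative specialization and verify closure of the class under products and limits. The factor $(1-\alpha z)^{-1}$ matches single-variable evaluation $h_n \mapsto \alpha^n$, under which $s_\lambda \mapsto s_\lambda(\alpha) \geq 0$. The factor $1 + \beta z$ matches the $\omega$-conjugate evaluation $h_n \mapsto e_n(\beta)$, under which $s_\lambda \mapsto s_{\lambda'}(\beta) \geq 0$. And $e^{\gamma z}$ is the Plancherel specialization, $p_1 \mapsto \gamma$ and $p_k \mapsto 0$ for $k \geq 2$, sending $s_\lambda$ to $\gamma^{|\lambda|} f^\lambda / |\lambda|!$. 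Multiplication of generating functions corresponds to superposition of specializations and preserves Schur-nonnegativity by monomial positivity of Schur polynomials; Schur-nonnegativity is also closed under pointwise limits on each graded component of $\Lambda$, so the summability $\sum(\alpha_n + \beta_n) < \infty$ makes the infinite product a legitimate Schur-nonnegative specialization.

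The substantive direction is necessity, for which two classical routes are available. The analytic route, due essentially to Edrei, applies the Aissen--Schoenberg--Whitney theorem both to $A(z)$ and to the companion series $\sum_n \phi(e_n) z^n = 1/A(-z)$: AWS forces the approximating totally positive polynomials to factor as $\prod_k (1 + c_k z)$ with $c_k \geq 0$, yielding in the limit factorizations accounting for the $\beta_n$'s and the $\alpha_n$'s respectively, while a residual entire factor of minimal growth contributes $e^{\gamma z}$ through limits of the form $(1 + \gamma z/N)^N \to e^{\gamma z}$; the summability $\sum(\alpha_n + \beta_n) < \infty$ is then forced by the convergence of $\{a_n\}$. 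The representation-theoretic route, due to Vershik and Kerov, identifies multiplicative Schur-nonnegative specializations with extremal normalized characters of the infinite symmetric group $S_\infty$, then classifies these via an ergodic argument on the Young graph, producing Thoma's parametrization directly. The main obstacle in either approach is controlling the limit: in the analytic route, reconciling the two factorizations and pinning down the exponential factor; in the ergodic route, establishing extremality of multiplicative characters and then classifying the extreme points.
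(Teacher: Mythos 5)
The theorem you are asked about is not proved in the paper at all: it is the classical Edrei--Thoma theorem, which the paper cites (Edrei, Thoma, with the conjecture due to Schoenberg and the polynomial case due to Aissen--Schoenberg--Whitney) and then uses as a black box; the paper itself remarks that the original proofs require deep complex analysis or, alternatively, Vershik--Kerov's asymptotic representation theory. So there is no ``paper proof'' to match, and the relevant question is whether your proposal actually constitutes a proof. It does not. Your reduction of total positivity to Schur-nonnegativity of the specialization $\phi(h_n)=a_n$ via Jacobi--Trudi is correct and standard (every minor of the Toeplitz matrix is $\phi(s_{\lambda/\mu})$, and skew Schur functions are Schur-positive), and your sufficiency argument --- identifying $(1-\alpha z)^{-1}$, $1+\beta z$, $e^{\gamma z}$ with the specializations $\phi_\alpha$, $\varepsilon_\beta$, $\pi_\gamma$ and using closure under unions and limits --- is sound and is exactly the easy direction (cf.\ the paper's Theorem on the factorization form).

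The genuine gap is the necessity direction, which is the entire content of the theorem. You do not prove it; you only name the two known strategies and explicitly defer their ``main obstacles.'' Moreover, the analytic sketch as written would fail: the Aissen--Schoenberg--Whitney theorem applies to \emph{finite} P\'olya frequency sequences (totally positive polynomials), but the truncations $1+\sum_{n\le N}a_nz^n$ of a totally positive function are in general \emph{not} totally positive polynomials, so there is no canonical family of ASW-factorable approximants to pass to the limit with; producing such approximants, reconciling the zero/pole factorizations of $A(z)$ and $1/A(-z)$, and isolating the entire factor as exactly $e^{\gamma z}$ (genus-one Hadamard-type control, which Edrei obtained from Nevanlinna-theoretic arguments) is precisely the hard analytic work you leave untouched. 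Likewise, the Vershik--Kerov route requires establishing the correspondence between extreme harmonic functions on the Young graph and multiplicative specializations (a ``ring theorem''), and then a nontrivial asymptotic classification of the extreme points giving Thoma's parameters as limits of scaled row and column lengths; asserting that this route ``produces Thoma's parametrization directly'' is a citation, not an argument. As it stands, the proposal is a correct reformulation plus a proof of the easy implication, with the substantive implication outsourced to the literature.
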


Let $\Lambda$ be the ring of symmetric functions. One of the most important bases of $\Lambda$ is given by {\it Schur functions} $\{ s_{\lambda}\}$. 
A homomorphism (specialization) $\rho : \Lambda \to \mathbb{R}$ is called {\it Schur-positive} if $\rho(s_{\lambda}) \ge 0$ for all partitions $\lambda$.  
The ring $\Lambda$ is a polynomial ring with generators $\{ h_n \}$ of complete homogeneous symmetric functions. Thus any homomorphism of $\Lambda$ can be specified via values of $h_n$. Using the {\it Jacobi-Trudi identity} $s_{\lambda} = \det[h_{\lambda_i - i +j}],$ 
the Edrei-Thoma theorem equivalently characterizes Schur-positive specializations as follows.

\begin{theorem}\label{ets}
A homomorphism $\rho : \Lambda \to \mathbb{R}$ is Schur-positive if and only if 
\begin{equation*}\label{hz}
\rho(H(z)) := 
1 + \sum_{n = 1}^{\infty} \rho(h_n)\, z^n  = e^{\gamma z} \prod_{n = 1}^{\infty} \frac{1 + \beta_n z}{1 - \alpha_n z}
\end{equation*}
for nonnegative reals 
$\{\alpha_n\}$,  $\{\beta_n\}$, 
and $\gamma$
such that $\sum_{n} (\alpha_n + \beta_n) < \infty$. 
Equivalently, $\rho(H(z))$ is a totally positive function, where $H(z)$ is defined in \eqref{hz}.
\end{theorem}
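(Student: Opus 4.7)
The plan is to reduce Theorem \ref{ets} to Theorem \ref{ets0} by showing that Schur-positivity of $\rho$ is equivalent to total positivity of the Toeplitz matrix $[\rho(h_{i-j})]_{i,j \ge 0}$ attached to the series $\rho(H(z))$. Once this equivalence is established, the parametrization of $\rho(H(z))$ is just a direct invocation of the Edrei--Thoma theorem for the sequence $a_n = \rho(h_n)$ (with $a_0 = 1$, enforced by $\rho$ being a unital homomorphism).

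For the easy direction, I would assume $\rho(H(z))$ is totally positive and simply apply the Jacobi--Trudi identity
\[
s_{\lambda} = \det\bigl[h_{\lambda_i - i + j}\bigr]_{i,j = 1}^{\ell(\lambda)}.
\]
Applying $\rho$ expresses $\rho(s_\lambda)$ as a specific minor of the Toeplitz matrix of $\rho(H(z))$ (corresponding to rows $\lambda_i - i$ and columns $-j$, after a standard shift), hence it is nonnegative. So $\rho$ is Schur-positive.

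For the converse, suppose $\rho$ is Schur-positive, and consider an arbitrary minor of the Toeplitz matrix, i.e. a determinant of the form $\det[\rho(h_{i_k - j_l})]_{k,l=1}^r$ with $i_1 < \cdots < i_r$ and $j_1 < \cdots < j_r$. After relabeling by $\lambda_k = i_{r+1-k} + (k - 1)$ and $\mu_k = j_{r+1-k} + (k-1)$ (and checking that the result is a partition pair with $\mu \subseteq \lambda$, otherwise the determinant vanishes or is nonnegative for trivial reasons), the skew Jacobi--Trudi identity $s_{\lambda/\mu} = \det[h_{\lambda_i - \mu_j - i + j}]$ identifies this minor with $\rho(s_{\lambda/\mu})$. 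Since skew Schur functions expand as nonnegative integer combinations of straight Schur functions (e.g.\ via the Littlewood--Richardson rule, or equivalently because $s_{\lambda/\mu}$ is a character of a polynomial representation), Schur-positivity of $\rho$ forces $\rho(s_{\lambda/\mu}) \ge 0$. Thus every minor is nonnegative, and $\rho(H(z))$ is a totally positive function.

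The main subtlety is bookkeeping in the converse: one must carefully turn the index sets $\{i_k\}$ and $\{j_l\}$ of a generic Toeplitz minor into a legitimate pair $\mu \subseteq \lambda$ so that the skew Jacobi--Trudi identity applies; the rest is formal. With the equivalence in hand, the explicit product formula is immediate from Theorem \ref{ets0} applied to $A(z) = \rho(H(z))$.
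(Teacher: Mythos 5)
Your proposal is correct and follows essentially the same route as the paper, which treats Theorem \ref{ets} as the Edrei--Thoma theorem (Theorem \ref{ets0}) restated via the Jacobi--Trudi identity: Schur-positivity of $\rho$ is identified with total nonnegativity of the Toeplitz matrix $[\rho(h_{i-j})]$, the minors being $\rho(s_{\lambda/\mu})$ by the skew Jacobi--Trudi formula and nonnegative by Littlewood--Richardson positivity. Your added bookkeeping for converting a general minor into a skew shape (and noting the minor vanishes when the index sets do not give $\mu \subseteq \lambda$) is exactly the standard detail the paper leaves implicit.
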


An important specialization of $\Lambda$ is the {\it Plancherel specialization} $\pi$ which corresponds to the parameters $\gamma = 1,$ $\alpha_k = \beta_k = 0,$ and for which $\pi(s_{\lambda}) = f^{\lambda}/{n!}$, where $f^{\lambda}$ is the number of standard Young tableaux (SYT) of shape $\lambda \vdash n$, or the dimension of an irreducible representation (indexed by $\lambda$) of the symmetric group $S_n$.

A relationship between totally positive functions and characters of the infinite symmetric group $S_{\infty}$ was made by Thoma \cite{thoma}. Vershik and Kerov \cite{vk81}  
interpreted the parameters $\alpha_n, \beta_n$ asymptotically as normalized row and column lengths in growing partitions, describing characters of $S_\infty$. 
Original proofs of Edrei and Thoma used deep results from complex analysis, whereas Vershik and Kerov's approach relied on asymptotic representation theory. 
Note also that the difficult part of Theorem~\ref{ets}(or \ref{ets0}) is the {\it only if} direction, i.e. if a homomorphism is positive then it satisfies a given parametrization. 

Similar positivity results are known for other classes of symmetric functions and generalizations of Schur polynomials, such as 
Jack symmetric functions \cite{koo}, Schur-$P,Q$ functions \cite{naz}, some general settings in \cite{bo00}, and a recent solution of Kerov's conjecture on positive specializations of Macdonald polynomials \cite{matveev}. We refer to \cite{bool} for more on the Edrei-Thoma theorem and representation theory of the infinite symmetric group, including background and many references therein.



\subsection{Symmetric Grothendieck polynomials} 
{\it Symmetric} (or {\it stable}) {\it Grothendieck polynomials} 
are considered as a $K$-theoretic deformation of Schur polynomials. They were first studied by Fomin and Kirillov \cite{fk}. We begin by defining {\it positive} symmetric Grothendieck polynomials $\{\tilde G^{}_{\lambda} \}$ by the following combinatorial formula due to Buch~\cite{buch}:
$$\tilde G^{}_{\lambda} = \tilde G^{}_{\lambda}(x_1, x_2, \ldots) := \sum_{T \in SVT(\lambda)} 
\prod_{i \ge 1} x_i^{\# i \text{ in } T},$$
where the sum runs over shape $\lambda$ {\it set-valued tableaux} (SVT), a generalization of semistandard Young tableaux (SSYT) so that boxes may contain sets of integers (for precise definitions see Sec.~\ref{gback}).
One can see that 
$\tilde G^{}_{\lambda} = s_{\lambda} + \{\text{higher degree terms}\} \in \hat\Lambda,$
where $\hat\Lambda$ is the completion of 
$\Lambda$, that includes 
{\it infinite} linear combinations of basis elements.
For example,
$$
\tilde G^{}_{(1)} = e_1 + e_2 + e_3 + \ldots \quad\text{ or }\quad 1 + \tilde G^{}_{(1)} = \prod_{n = 1}^{\infty} (1 + x_n) 
$$
where $e_k$ is the $k$th elementary symmetric function.

Crucially, for all partitions $\mu, \nu$, the product
\begin{equation}\label{finitexp}
\tilde G^{}_{\mu} \cdot \tilde G^{}_{\nu} = \sum_{\lambda} 
c^{\lambda}_{\mu \nu}\, \tilde G^{}_{\lambda}, \qquad |\lambda| \ge |\mu| + |\nu|, \quad c^{\lambda}_{\mu \nu} \in \mathbb{Z}_{\ge 0}
\end{equation}
expands as a {\it finite} sum. The nonnegative integers $c^{\lambda}_{\mu \nu}$ are {\it generalized Littlewood-Richardson} (LR) coefficients as in the lowest degree case $|\lambda| = |\mu| + |\nu|$ they become just the usual LR coefficients corresponding to product of Schur functions. This was proved by Buch \cite{buch} via an explicit combinatorial LR rule for $c^{\lambda}_{\mu \nu}$. This finite expansion property can also be seen without any combinatorial interpretation of $c^{\lambda}_{\mu \nu}$ but conceptually on symmetric functions level \cite{dy2}. 

In light of the multiplication rule \eqref{finitexp} one defines the commutative ring 
$$\Gamma := \bigoplus_{\lambda} \mathbb{R} \cdot \tilde G^{}_{\lambda}$$ 
with a formal basis $\{ \tilde G^{}_{\lambda}\}$. 
The ring $\Gamma$ is related to $K$-theory of Grassmannians and we refer to \cite{buch} for background. 
As is also mentioned there, the ring $\Gamma$ has somewhat unclear structure. For instance, it is {not} isomorphic to $\Lambda$ (but as completions $\hat\Lambda \cong \hat\Gamma$); we also do not know if it is a polynomial ring. Buch  conjectured that any $\tilde G^{}_{\lambda}$ is a polynomial in the elements $\tilde G^{}_{R}$ for rectangular partitions $R \subset \lambda$; this would imply that the localization ring generated by $\Gamma$ and $q = (1 +  \tilde G^{}_{(1)})^{-1}$ is generated by the elements $\{\tilde G^{}_{(n)}\},$ $\{\tilde G^{}_{(1^{n})}\},$ and $q$ \cite{buch}.

\subsection{Grothendieck-positive specializations}
A homomorphism $\varphi : \Gamma \to \mathbb{R}$ is called {\it $G$-positive} (or {\it Grothendieck-positive}) 
if 
$$\varphi(\tilde G^{}_{\lambda}) \ge 0 \text{ for all partitions } \lambda.$$ 
Say that $\varphi$ is {\it normalized} if $\varphi(\tilde G_{(1)}) = 1$. We shall usually use the notation $\tilde G_{\lambda}(\varphi)$  for $\varphi(\tilde G_{\lambda})$.

\

The main problem that we address and solve in this paper is the following.
\begin{problem}
Describe $G$-positive specializations of the ring $\Gamma$.
\end{problem}

One of the key results that we prove is the 
following transition theorem from Grothendieck to Schur positive specializations.
\begin{theorem}\label{t1}
Let $\varphi : \Gamma \to \mathbb{R}$ be a $G$-positive homomorphism of $\Gamma$. Then the map $\rho$ given by  
$$
\rho: h_n \longmapsto \frac{\tilde G^{}_{(n)}(\varphi) + \tilde G^{}_{(n+1)}(\varphi)}{1 + \tilde G^{}_{(1)}(\varphi) } \qquad n = 1,2, \ldots
$$
defines a Schur-positive specialization of $\Lambda$.
\end{theorem}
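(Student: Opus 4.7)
The plan is first to derive a rational identity between the complete homogeneous symmetric functions $h_n$ and the row Grothendieck generators $\tilde G_{(k)}$ in the completion $\hat\Lambda$, which both legitimizes the formula for $\rho$ and sets up the reduction; then to reduce Schur-positivity of $\rho$ to a determinantal positivity statement, which I would finally lift to an identity in $\hat\Lambda$ whose right-hand side is manifestly $G$-positive under $\varphi$.

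First I would establish the key identity
\[
h_n\bigl(1 + \tilde G_{(1)}\bigr) \;=\; \tilde G_{(n)} + \tilde G_{(n+1)}, \qquad n \geq 0,
\]
in $\hat\Lambda$. A direct combinatorial calculation from the set-valued tableau definition of $\tilde G_{(n)}$ gives
\[
\sum_{n \geq 0} \tilde G_{(n)}\, z^n \;=\; \frac{1 + z\, H(z)\, E(1)}{1+z},
\]
where $H(z) = \sum h_n z^n$ and $E(1) = 1 + \tilde G_{(1)} = \prod_i(1+x_i)$; multiplying through by $1+z$ and comparing the coefficient of $z^n$ on each side yields the displayed relation. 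Because the $h_n$'s are algebraically independent generators of $\Lambda$, and $1 + \tilde G_{(1)}(\varphi)$ is strictly positive away from the trivial case $\varphi \equiv 0$, the formula in the theorem defines a well-defined ring homomorphism $\rho : \Lambda \to \mathbb{R}$.

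By Theorem~\ref{ets}, Schur-positivity of $\rho$ amounts to $\rho(s_\lambda) \geq 0$ for every partition $\lambda$. Plugging the identity above into the Jacobi--Trudi formula $s_\lambda = \det[h_{\lambda_i - i + j}]$ and pulling a factor $1 + \tilde G_{(1)}$ out of each row gives
\[
\bigl(1 + \tilde G_{(1)}(\varphi)\bigr)^{\ell(\lambda)}\rho(s_\lambda) \;=\; \det\Bigl[\tilde G_{(\lambda_i - i + j)}(\varphi) + \tilde G_{(\lambda_i - i + j + 1)}(\varphi)\Bigr]_{i,j=1}^{\ell(\lambda)},
\]
so the problem reduces to showing this determinant is nonnegative for every $\lambda$.

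The hard part will be lifting this to an identity of the form
\[
s_\lambda \,\bigl(1 + \tilde G_{(1)}\bigr)^{\ell(\lambda)} \;=\; \sum_\mu a_{\lambda \mu}\, \tilde G_\mu, \qquad a_{\lambda \mu} \in \mathbb{Z}_{\geq 0},
\]
in $\hat\Lambda$: once this is proved, applying $\varphi$ and using Buch's nonnegativity of Grothendieck structure constants immediately gives the required inequality. To prove the identity I would expand the determinant by multilinearity in columns; most of the $2^{\ell(\lambda)}$ resulting summands vanish because of repeated columns, leaving exactly $\ell(\lambda)+1$ ``shifted Jacobi--Trudi'' determinants, one for each omitted column of the extended matrix $[\tilde G_{(\lambda_i - i + j)}]_{j=1,\ldots,\ell(\lambda)+1}$. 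I would then look for a Lindstr\"om--Gessel--Viennot interpretation of these determinants in terms of set-valued lattice paths, with a sign-reversing involution on intersecting configurations packaging the total as a genuine weighted count of set-valued tableaux of shapes $\mu \supseteq \lambda$, yielding the nonnegative $\tilde G_\mu$-expansion. As a sanity check, for any finite-variable substitution $\varphi(\tilde G_\lambda) := \tilde G_\lambda(x_1, \ldots, x_k)$ with $x_i \geq 0$ the formula in the theorem collapses to $\rho(h_n) = h_n(x_1, \ldots, x_k)$, so $\rho$ is literally the standard Schur-positive evaluation at $(x_1,\ldots,x_k)$; a density/limiting argument leveraging the paper's broader parameterization of $G$-positive specializations could in principle substitute for the combinatorial LGV step.
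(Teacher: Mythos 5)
Your setup is sound and coincides with the paper's: the identity $h_n\,(1+\tilde G_{(1)})=\tilde G_{(n)}+\tilde G_{(n+1)}$ is exactly the paper's Lemma~\ref{lhb} (proved there from $\tilde G_{(n)}=\sum_i s_{(n-1|i)}$ and the Pieri rule), and your reduction via Jacobi--Trudi to the nonnegativity of $\det[\tilde G_{(\lambda_i-i+j)}(\varphi)+\tilde G_{(\lambda_i-i+j+1)}(\varphi)]$ is precisely the statement that the paper's $F_\lambda=\det[H_{\lambda_i-i+j}]=(1+\tilde G_{(1)})^{\ell(\lambda)}s_\lambda$ lies in $\Gamma_+$ (Theorem~\ref{fgplus}); your column-multilinearity observation that only the $\ell(\lambda)+1$ ``weakly increasing'' choices survive is also correct.

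The gap is that this last positivity statement is the entire content of the theorem, and you do not prove it. Your plan is to ``look for'' an LGV model with a sign-reversing involution turning the alternating sum of the $\ell+1$ shifted determinants $\det[\tilde G_{(\lambda_i-i+j+\epsilon_j)}]$ into a nonnegative $\tilde G_\mu$-expansion, but no such involution is exhibited, and there is no off-the-shelf LGV lemma for these determinants: naive Jacobi--Trudi-type determinants in the one-row functions $\tilde G_{(m)}$ do not equal $\tilde G_\lambda$, and the transition from Schur to Grothendieck functions carries alternating signs (the $(-1)^{|\mu/\lambda|}\mathrm{f}_{\mu/\lambda}$ in \eqref{gschur}), so positivity is a genuine cancellation phenomenon rather than a term-by-term count. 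The paper handles exactly this point by a different computation: it writes $F_\mu=(1+\tilde G_{(1)})^{\ell}s_\mu$, expands via $s_\mu(\mathbf{x})\prod_i(1+x_i)^{\ell}=\sum_{\lambda\supset\mu}s_\lambda(\mathbf{x})\,s_{\lambda'/\mu'}(1^{\ell})$, converts Schur to Grothendieck with signs, and then shows through Cauchy--Binet plus a binomial identity that the signed coefficient collapses to $\det\bigl[\binom{\ell-i+1}{\nu_i-i-\mu_j+j}\bigr]=\mathrm{d}_{\nu/\mu}\ge 0$, the delegant-tableau count; some argument of this depth (or a worked-out involution) is required, not just the hope of one. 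Your proposed fallback is also not viable: a density/limiting argument over the parametrized family of $G$-positive specializations is circular, since the claim that every $G$-positive homomorphism belongs to (or is approximated within) that family is exactly what Theorems~\ref{t1}--\ref{t2} are needed to establish, and evaluating the universal identity $s_\lambda(1+\tilde G_{(1)})^{\ell(\lambda)}=\sum_\mu a_{\lambda\mu}\tilde G_\mu$ at nonnegative specializations can confirm consistency but cannot certify that the coefficients $a_{\lambda\mu}$ themselves are nonnegative, which is what an arbitrary $G$-positive $\varphi$ requires.
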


Alternatively, the theorem states that we have the generating function for the elements $\{\tilde G_{(n)}(\varphi) \}$: 
\begin{equation*}\label{eq2}
1 + (z + 1) \sum_{n = 1}^{\infty} \tilde G^{}_{(n)}(\varphi)\, z^{n - 1} = (1 + \delta)\, e^{\gamma z} \prod_{n = 1}^{\infty} \frac{1 + \beta_n z}{1 - \alpha_n z}
\end{equation*}
for nonnegative reals $\{\alpha_n\}$,  $\{\beta_n\}$, $\gamma$, and $\delta$ such that $\sum_{n} (\alpha_n + \beta_n) < \infty$.
Here $\tilde G^{}_{(1)}(\varphi) = \delta$ and for $z = -1$ (given it converges) we additionally get 
$$
\delta = -1 + e^{\gamma} \prod_{n = 1}^{\infty} \frac{1 + \alpha_n}{1 - \beta_n}
$$
To prove this theorem we define certain auxiliary functions via Jacobi-Trudi-type formula (Sec.~\ref{gps}) and show that they are Grothendieck-positive, i.e. expand positively in the Grothendieck basis. 
  
More precisely, we characterize normalized $G$-positive specializations as follows.
\begin{theorem}\label{t2}
Let $\varphi : \Gamma \to \mathbb{R}$ be a normalized $G$-positive homomorphism. Then the map
$$
\rho: h_n \longmapsto \frac{1}{2}\left(\tilde G^{}_{(n)}(\varphi) + \tilde G^{}_{(n+1)}(\varphi)\right) \qquad n = 1,2, \ldots
$$
defines a Schur-positive specialization of $\Lambda$ parametrized by
nonnegative reals 
$\{\alpha_n\}$,  $\{\beta_n < 1\}$, $\gamma$ 
such that 
\begin{equation*}
\gamma = \log 2 - \sum_{n} \log (1 + \alpha_n) + \sum_{n} \log (1 - \beta_n). 
\end{equation*}
Conversely, given a Schur-positive specialization parametrized by $\{\alpha_n\}$,  $\{\beta_n\}$, $\gamma$ as above, it extends to a normalized $G$-positive homomorphism of $\Gamma$. 
\end{theorem}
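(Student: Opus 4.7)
My plan for the forward direction is to apply Theorem~\ref{t1} in the normalized case $\delta = \tilde G_{(1)}(\varphi) = 1$. Theorem~\ref{t1} then yields that $\rho$ is Schur-positive, and its generating-function reformulation specializes to
\begin{equation*}
1 + (z+1) \sum_{n \ge 1} \tilde G_{(n)}(\varphi)\, z^{n-1} \;=\; 2\, e^{\gamma z} \prod_{n \ge 1} \frac{1 + \beta_n z}{1 - \alpha_n z},
\end{equation*}
where $\alpha_n, \beta_n, \gamma$ are the Edrei--Thoma parameters of $\rho$. Substituting $z = -1$ collapses the left-hand side to $1$, giving $e^{\gamma} = 2 \prod_n (1 - \beta_n)/(1 + \alpha_n)$; taking logarithms yields the claimed formula for $\gamma$, provided $\beta_n < 1$ for every $n$.

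The condition $\beta_n < 1$ is the subtle point. The $z = -1$ substitution by itself only rules out $\beta_n = 1$ and, via a sign count in the product, an odd number of indices with $\beta_n > 1$, but not an even number. To close this gap I would exploit $G$-positive data beyond the single rows. One clean route uses the identity $\tilde G_{(1)} = \sum_{n \ge 1} e_n$ in $\hat\Lambda$: under the ring-theoretic bridge between $\hat\Gamma$ and $\hat\Lambda$ that implicitly underlies Theorem~\ref{t1}, the normalization $\varphi(\tilde G_{(1)}) = 1$ pushes to $\sum_n \rho(e_n) = 1$, a finite sum. On the other hand, $\rho(E(z)) = e^{\gamma z}\prod(1+\alpha_n z)/(1-\beta_n z)$, whose absolute convergence at $z = 1$ is equivalent to $\beta_n < 1$ for every $n$. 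This forces $\beta_n < 1$ and completes the forward direction.

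For the converse, given parameters $\{\alpha_n\}, \{\beta_n < 1\}, \gamma$ satisfying the stated relation, I would let $\rho$ be the Schur-positive specialization of $\Lambda$ supplied by Theorem~\ref{ets}. Since $\beta_n < 1$ and $\sum(\alpha_n+\beta_n) < \infty$, the right-hand side of the displayed identity is analytic at $z = -1$ and vanishes there alongside the factor $(z+1)$, so solving for $S(z) := \sum_{n \ge 1} \tilde G_{(n)}(\varphi)\, z^{n-1}$ produces a power series with nonnegative coefficients that determines $\tilde G_{(n)}(\varphi)$ for every $n$. The Jacobi-Trudi-type machinery of Section~\ref{gps} then identifies $h_n$ with $\tfrac{1}{2}(\tilde G_{(n)} + \tilde G_{(n+1)})$ inside $\hat{\Gamma}$ as a ring-theoretic change of basis, and via this bridge $\rho$ lifts to a homomorphism $\varphi : \Gamma \to \mathbb{R}$ with $\tilde G_{\lambda}(\varphi) \ge 0$ for every $\lambda$. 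The main obstacle --- in both directions --- is justifying this bridge cleanly enough to (a) deduce $\beta_n < 1$ from row-shape data in the forward direction, and (b) verify genuine multiplicativity of $\varphi$ against the generalized Littlewood--Richardson coefficients $c^{\lambda}_{\mu\nu}$ in the converse.
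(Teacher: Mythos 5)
Your skeleton matches the paper's strategy (invoke Theorem~\ref{t1}, evaluate at $z=-1$ to get the relation for $\gamma$, treat $\beta_n<1$ as the crux), but the two steps you flag as ``the bridge'' are exactly where the proof has to happen, and your proposal does not supply them. In the forward direction, the assertion that the normalization $\varphi(\tilde G_{(1)})=1$ ``pushes to $\sum_n\rho(e_n)=1$'' is not a consequence of any established statement: $\rho$ is defined only through its values on $h_n$, and the finiteness of $\sum_n\rho(e_n)$ (equivalently, convergence of $\rho(E(z))$ at $z=1$) is essentially the very claim $\beta_n<1$ you are trying to prove, so as written the argument is circular. Moreover, the direct substitution $z=-1$ into the series identity already needs a convergence justification, and it genuinely fails when $\tilde G_{(2)}(\varphi)=1$: there all $\tilde G_{(n)}(\varphi)=1$, $\rho(H(z))=1/(1-z)$ has radius of convergence exactly $1$, and the series at $z=-1$ diverges. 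The paper handles this by splitting into cases: the degenerate case $\tilde G_{(2)}(\varphi)=1$ is disposed of via Lemma~\ref{zero} (giving $\gamma=0$, $\alpha_1=1$), and in the case $\tilde G_{(2)}(\varphi)=\delta<1$ it uses log-concavity of $\left(\rho(h_n)\right)$ and of $\left(\rho(e_n)\right)$ (nonnegativity of the $2\times2$ Toeplitz minors, available from Theorem~\ref{toep}/Schur positivity) together with $\rho(h_1)=\rho(e_1)=\tfrac{1+\delta}{2}<1$ to conclude that both $\rho(H(z))$ and $\rho(E(z))$ have radius of convergence strictly greater than $1$; this simultaneously legitimizes the evaluation $\rho(H(-1))=\tfrac12$ and yields $\alpha_n,\beta_n<1$. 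Some Grothendieck-side input beyond Theorem~\ref{t1} (here the Pieri inequality $\tilde G_{(n+1)}(\varphi)\le\tilde G_{(n)}(\varphi)\le 1$ and the value of $\tilde G_{(2)}(\varphi)$) is unavoidable; your proposal never isolates such an input.

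The converse is likewise not carried out. Producing the single-row values $\tilde G_{(n)}(\varphi)$ from the generating function is not enough: one must define $\tilde G_{\lambda}(\hat\rho)$ for all $\lambda$, prove these quantities are finite and nonnegative, and verify that the resulting map is a homomorphism --- and you explicitly leave the last two points as ``obstacles.'' The paper's route is concrete: set $\tilde G_{\lambda}(\hat\rho):=\sum_{\mu\supset\lambda}\mathrm{r}_{\mu/\lambda}\,\rho(s_{\mu})$ using \eqref{gschur}; check the normalization $\tilde G_{(1)}(\hat\rho)=-1+\rho(H(-1))^{-1}=1$ from the constraint on $\gamma$; and obtain convergence by a domination argument: $(\tilde G_{(1)})^{n}-\tilde G_{\lambda}\in\Gamma_+$ is Grothendieck-positive and hence Schur-positive, so the Schur coefficients $b_{\mu}$ of $(\tilde G_{(1)})^{n}$ satisfy $b_{\mu}\ge \mathrm{r}_{\mu/\lambda}$, while $\sum_{\mu}b_{\mu}\,\rho(s_{\mu})=1$ converges; hence $0\le\sum_{\mu}\mathrm{r}_{\mu/\lambda}\,\rho(s_{\mu})\le 1$. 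Multiplicativity then comes with the notion of extension (compatibility of the $\Gamma$-structure constants with the Schur expansions), rather than being an unresolved issue about the coefficients $c^{\lambda}_{\mu\nu}$. Without an argument of this type, your converse establishes only the values on one-row shapes, not a $G$-positive homomorphism.
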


\subsection{Harmonic functions and boundary of a filtered Young's graph}  
Consider the (infinite) {\it filtered Young graph} $\widetilde{\mathbb{Y}^{}}$ defined as follows:
\begin{itemize}
\item[(i)] its vertices are labeled by partitions $\lambda$;
\item[(ii)] there is an arc $\lambda \to \mu$ iff $\mu/\lambda$ is a {\it rook strip} (i.e. no two boxes lie in the same row or column)
\end{itemize}
As we will see, this graph 
encodes Pieri rules for Grothendieck polynomials.

Let $\mathcal{P}$ be the set of partitions. 
A function $\varphi : \mathcal{P} \to \mathbb{R}_{\ge 0}$ is called {\it harmonic}\footnote{Harmonic functions in Vershik--Kerov sense \cite{vk81}, usually defined for graded graphs \cite{ker, bo00}.} on the graph $\widetilde{\mathbb{Y}}$ if 
$$
\varphi(\varnothing) = 1\quad \text{ and }\quad \varphi(\lambda) = \sum_{\mu\, :\, \lambda \to \mu} 
\varphi(\mu).
$$

Let $H^{}(\widetilde{\mathbb{Y}})$ be the {convex} set of harmonic functions on $\widetilde{\mathbb{Y}^{}}$ 
and let $\partial\widetilde{\mathbb{Y}} \subset H^{}(\widetilde{\mathbb{Y}})$ be the set of {\it extreme points} of $H^{}(\widetilde{\mathbb{Y}})$, or the {\it boundary} of $\widetilde{\mathbb{Y}}$ (i.e. the set of harmonic functions that are not expressible as nontrivial convex combinations of other harmonic functions). 

For every function $\varphi \in H^{}(\widetilde{\mathbb{Y}})$ define the {linear functional} $\hat\varphi : \Gamma \to \mathbb{R}$ such that $\hat\varphi(\tilde G_{\lambda}) := \varphi(\lambda)$.
Then $G$-positive homomorphisms characterize the corresponding boundary. Namely, the set of linear functionals of the  
boundary $\partial \widetilde{\mathbb{Y}}$ coincides with the set of normalized $G$-positive homomorphisms. We have the following version of Vershik--Kerov ``ring theorem" \cite{vk, bool} for $\widetilde{\mathbb{Y}}$.

\begin{theorem}
We have: $\varphi \in \partial \widetilde{\mathbb{Y}}$ i.e. $\varphi$ is extreme if and only if 
$\hat\varphi$ is a normalized $G$-positive homomorphism of $\Gamma$.
\end{theorem}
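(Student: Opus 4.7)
The plan is to translate harmonicity into a Pieri-type multiplicativity constraint and then to establish, through a Vershik--Kerov-style ergodic argument bridged by Theorems~\ref{t1}--\ref{t2}, that the extreme points of $H(\widetilde{\mathbb{Y}})$ are precisely the full homomorphisms. The preliminary step is to verify the Pieri expansion
\[
\tilde G_{(1)} \cdot \tilde G_\lambda \;=\; \sum_{\mu\,:\,\lambda\to\mu} \tilde G_\mu,
\]
where the sum runs over nonempty rook strips $\mu/\lambda$. For a normalized linear functional $\hat\varphi$ on $\Gamma$, applying $\hat\varphi$ to this identity shows that $\hat\varphi(\tilde G_{(1)}\tilde G_\lambda) = \hat\varphi(\tilde G_{(1)})\,\hat\varphi(\tilde G_\lambda)$ is exactly the harmonicity relation for $\varphi$ at $\lambda$. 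Hence $H(\widetilde{\mathbb{Y}})$ is identified with the convex set of normalized $G$-positive linear functionals that are multiplicative against the single element $\tilde G_{(1)}$, and the problem reduces to showing that its extreme points coincide with the full homomorphisms.

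The direction $(\Leftarrow)$ is the easier one. If $\hat\varphi$ is a normalized $G$-positive homomorphism, then nonnegativity of $\varphi$, the condition $\varphi(\varnothing) = 1$, and harmonicity all follow by applying multiplicativity to the Pieri identity. Extremality is obtained from the rigidity of the Thoma parametrization in Theorem~\ref{t2}: a decomposition $\varphi = t\varphi_1 + (1-t)\varphi_2$ with $\varphi_i \in H(\widetilde{\mathbb{Y}})$ would, via the $(\Rightarrow)$ direction, integrate against the parametrized boundary; since $\varphi$ corresponds to a single point of that parameter set, uniqueness of barycentric decomposition in the resulting Choquet simplex forces $\varphi_1 = \varphi_2 = \varphi$.

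For the harder direction $(\Rightarrow)$, assume $\varphi$ is extreme. I would follow the Vershik--Kerov strategy: regard $\varphi$ as driving a Markov chain on $\mathcal{P}$ with transition probabilities $p(\lambda \to \mu) = \varphi(\mu)/\varphi(\lambda)$ supported on arcs of $\widetilde{\mathbb{Y}}$, and use that extremality of $\varphi$ is equivalent to triviality of the tail $\sigma$-algebra. Along $\varphi$-almost every trajectory, the normalized row and column lengths of the growing partition converge to deterministic limits, which via Theorem~\ref{t1} identify with the Thoma parameters $(\alpha_n, \beta_n, \gamma)$ of an associated Schur-positive specialization. Theorem~\ref{t2} then pins down $\hat\varphi(\tilde G_{(n)})$ for every $n$, and iterating the Pieri rule together with the Jacobi--Trudi-type determinantal formulas from the proof of Theorem~\ref{t1} reconstructs $\hat\varphi$ on all of $\Gamma$ as the homomorphism associated to those parameters. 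The principal obstacle lies in this extension step: because $\Gamma$ is inhomogeneous and Buch's polynomiality conjecture is open, multiplicativity cannot be built up intrinsically within $\Gamma$ and must be routed through Theorem~\ref{t1}, where the classical Edrei--Thoma rigidity supplies the needed control.
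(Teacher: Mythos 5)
Your reduction of harmonicity to multiplicativity against $\tilde G_{(1)}$ matches the paper's setup, but both halves of your argument have genuine gaps. For the hard direction (extreme $\Rightarrow$ homomorphism) your route is circular: Theorems~\ref{t1} and \ref{t2} take as hypothesis that $\hat\varphi$ is a $G$-positive \emph{homomorphism} of $\Gamma$, which is precisely what must be produced; an extreme harmonic function only gives a normalized positive linear functional with $\hat\varphi(\tilde G_{(1)}\tilde G_{\lambda})=\hat\varphi(\tilde G_{\lambda})$, so you cannot invoke those theorems to attach Thoma parameters to it. The law-of-large-numbers statement you appeal to (almost-sure convergence of normalized row and column lengths along the Markov chain) is itself the hard core of the Vershik--Kerov proof of Edrei--Thoma and would have to be proved from scratch for the filtered graph $\widetilde{\mathbb{Y}}$. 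And even granting the parameters, knowing $\hat\varphi(\tilde G_{(n)})$ for all $n$ does not determine $\hat\varphi$ on all of $\Gamma$: as you yourself observe, $\Gamma$ is not known to be generated by the one-row elements (Buch's conjecture is open), and ``routing through Theorem~\ref{t1}'' does not remove that obstruction. The paper avoids all of this with a short algebraic ring-theorem argument: for $f\in\Gamma_+$ with $\hat\varphi(f)>0$ set $\varphi_f(g):=\hat\varphi(fg)/\hat\varphi(f)$ and check it again satisfies the three defining conditions (using only that $\Gamma_+$ is closed under multiplication); then for $\mu\vdash m$ with $\hat\varphi(\tilde G_{\mu})>0$ write $\hat\varphi=c_1\varphi_{g_1}+c_2\varphi_{g_2}$ with $g_1=\tfrac12\tilde G_{\mu}$ and $g_2=(\tilde G_{(1)})^{m}-g_1\in\Gamma_+$, so extremality forces $\hat\varphi=\varphi_{g_1}$, i.e.\ multiplicativity against $\tilde G_{\mu}$; the degenerate case is settled by $0\le\hat\varphi(\tilde G_{\mu}\tilde G_{\lambda})\le\hat\varphi\bigl(\tilde G_{\mu}(\tilde G_{(1)})^{|\lambda|}\bigr)=\hat\varphi(\tilde G_{\mu})=0$, using $(\tilde G_{(1)})^{n}-\tilde G_{\lambda}\in\Gamma_+$. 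No asymptotics, no Edrei--Thoma input, and no use of Theorems~\ref{t1}--\ref{t2} is needed.

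The converse direction (homomorphism $\Rightarrow$ extreme) is also not closed as written: saying that $\varphi$ ``corresponds to a single point of the parameter set'' and invoking uniqueness of barycentric decomposition presupposes that the representing measure of $\varphi$ is a delta, which is exactly what extremality means; uniqueness of the representing measure does not by itself exclude a spread-out measure for a functional that happens to be multiplicative, and the parametrization of Theorem~\ref{t2} describes a set that a priori only \emph{contains} the boundary. The missing step is the variance argument used in the paper: by Choquet write $\hat\varphi(G)=\int_{f\in\partial\widetilde{\mathbb{Y}}}\hat f(G)\,\mu(df)$, use multiplicativity to get
\[
\hat\varphi(G)^2=\hat\varphi(G^2)=\int_{f\in\partial\widetilde{\mathbb{Y}}}\hat f(G)^2\,\mu(df),
\]
so the random variable $f\mapsto\hat f(G)$ has zero variance, hence $\hat f(G)=\hat\varphi(G)$ $\mu$-almost surely for every $G\in\Gamma$, and $\mu$ is a delta measure. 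Incorporating this step (and replacing the circular use of Theorems~\ref{t1}--\ref{t2} by the intrinsic convex-decomposition argument above) is what is needed to make the proof complete.
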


The graph $\widetilde{\mathbb{Y}}$ is viewed as a {\it filtered} deformation of graded Young's lattice $\mathbb{Y}$ as its vertices $\lambda \in \mathcal{P}$ form a {\it graded} set ranked by $|\lambda|$ and arcs join vertices from lower to higher ranks. Note also that this graph is part of the {\it M\"obius deformation} of Young's graph \cite{pp} (cf. \cite{dy2}), which is one of major examples of {\it dual filtered graphs} studied by Patrias and Pylyavskyy \cite{pp} as $K$-theoretic analogues of Fomin's dual graded graphs \cite{fomin} and Stanley's differential posets \cite{stadiff}. 

\subsection{Grothendieck polynomials with alternating signs} The functions $\{G_{\lambda}\}$ are usually defined with alternating signs in monomials \cite{lenart, buch, lp}, namely as the functions 
$$
{G}_{\lambda}(x_1, x_2, \ldots) := (-1)^{|\lambda|}\, \tilde G_{\lambda}(-x_1, -x_2, \ldots).
$$
We also define {\it $\overline{G}$-positive homomorphisms} $\varphi$ 
of $\Gamma$ satisfying $\varphi({G}_{\lambda}) \ge 0$.
In Sec.~\ref{galt} we study and characterize 
such {$\overline{G}$-positive specializations} similarly as for $G$-positivity.

\subsection{Dual Grothendieck polynomials} 
There is a basis $\{ g^{}_{\lambda}\}$ of $\Lambda$ that is {\it dual} to $\{ {G}^{}_{\lambda}\}$ via the Hall inner product for which Schur functions form an orthonormal basis. It was explicitly described via plane partitions by Lam and Pylyavskyy in \cite{lp}. 
In Sec.~\ref{dgs} we also define and describe a class of {\it $g$-positive specializations} of the ring $\Lambda$ (Proposition~\ref{ggg}). We conjecture that this class is in fact complete, i.e. it describes all $g$-positive specializations. 

\section{Schur-positive specializations}
\subsection{Partitions and Young diagrams} 
A {\it partition} is a sequence $\lambda = (\lambda_1 \ge \ldots \ge \lambda_{\ell} > 0)$, where $\ell = \ell(\lambda)$ is the length of $\lambda$. Any partition can be represented as a {\it Young diagram} with $\lambda_i$ boxes in row $i$; equivalently, as the set $\{(i,j) : 1 \le i \le \ell, 1 \le j \le \lambda_i \}$. The partition $\lambda'$ is the {\it conjugate} of $\lambda$ obtained by transposing its diagram.  
We use English notation for drawing Young diagrams, index columns from left to right and rows from top to bottom. Let $\mathcal{P}$ be the set of partitions.

\subsection{Specializations of the ring of symmetric functions}
The ring $\Lambda$ of symmetric functions in the variables $\mathbf{x} = (x_1, x_2, \ldots)$ can be viewed as 
$$\Lambda \cong \mathbb{R}[h_1, h_2, \ldots] \cong \mathbb{R}[e_1, e_2, \ldots] \cong \mathbb{R}[p_1, p_2, \ldots]$$ i.e. 
a polynomial ring with one of the following sets of generators:
$$h_n := \sum_{1 \le i_1 \le \ldots \le i_n } x_{i_1} \cdots x_{i_n}, \qquad e_n := \sum_{1 \le i_1 < \ldots < i_n } x_{i_1} \cdots x_{i_n}, \qquad p_n := \sum_{1 \le i} x_i^{n}$$ of {complete homogeneous}, 
{elementary}, 
or {power sum symmetric functions}. 

A {\it specialization} is any homomorphism $\Lambda \to \mathbb{R}$ and it can be defined by specifying generators. For any specializations $\rho_1, \rho_2 : \Lambda \to \mathbb{R}$ we can define their {\it union} 
$$\rho = (\rho_1, \rho_2) = \rho_1 \cup \rho_2$$ 
via the power sum $\{ p_n\}$ generators as follows:
$$
\rho(p_{n}) := \rho_1(p_n) + \rho_2(p_n)  \text{ for all } n = 1, 2, \ldots
$$
Note that we have
\begin{align}\label{hz}
H(z) 	:= 1 + \sum_{n = 1}^{\infty} h_n\, z^n 
	= \prod_{n = 1}^{\infty} \frac{1}{1 - z x_n}
	= \exp\left(\sum_{n = 1}^{\infty} \frac{p_n}{n} z^n \right)
\end{align}
and hence 
$$
\rho(H(z)) = \rho_1(H(z)) \cdot \rho_2(H(z)).
$$
From these generating function identities we have the following.
\begin{lemma} \label{fact}
Let $\rho, \rho_1, \rho_2 : \Lambda \to \mathbb{R}$ be specializations of $\Lambda$.
We have: $\rho = (\rho_1, \rho_2)$ 
if and only if 
$\rho(H(z)) = \rho_1(H(z)) \cdot \rho_2(H(z)).$
\end{lemma}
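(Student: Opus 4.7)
The plan is to reduce the statement to an identity among power sum generators using the exponential form of $H(z)$ from equation~\eqref{hz}, namely
\[
H(z) = \exp\!\left(\sum_{n=1}^{\infty} \frac{p_n}{n}\, z^n\right).
\]
Since $\Lambda \cong \mathbb{R}[p_1, p_2, \ldots]$, any specialization is determined by its values on the $p_n$, so $\rho = (\rho_1, \rho_2)$ is, by definition, equivalent to $\rho(p_n) = \rho_1(p_n) + \rho_2(p_n)$ for every $n \ge 1$. The whole lemma is therefore just a restatement of this coefficient identity at the level of the generating function $H(z)$.

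For the forward direction, I would assume $\rho = (\rho_1, \rho_2)$ and apply $\rho$ termwise to the exponential formula above. The additivity $\rho(p_n) = \rho_1(p_n) + \rho_2(p_n)$ splits the exponent into a sum, and the exponential of a sum of formal power series (without constant term) is the product of the exponentials, giving
\[
\rho(H(z)) = \exp\!\left(\sum_{n} \frac{\rho_1(p_n)}{n} z^n\right) \exp\!\left(\sum_{n} \frac{\rho_2(p_n)}{n} z^n\right) = \rho_1(H(z))\,\rho_2(H(z)).
\]

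For the converse, assume $\rho(H(z)) = \rho_1(H(z))\,\rho_2(H(z))$ in $\mathbb{R}[[z]]$. Since each of $\rho(H(z))$, $\rho_1(H(z))$, $\rho_2(H(z))$ has constant term $1$, the formal logarithm is well-defined and turns the product into a sum. Comparing coefficients of $z^n$ in
\[
\sum_{n \ge 1} \frac{\rho(p_n)}{n}\, z^n = \sum_{n \ge 1} \frac{\rho_1(p_n) + \rho_2(p_n)}{n}\, z^n
\]
gives $\rho(p_n) = \rho_1(p_n) + \rho_2(p_n)$ for all $n$, which by the polynomial generation $\Lambda = \mathbb{R}[p_1, p_2, \ldots]$ forces $\rho = (\rho_1, \rho_2)$.

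There is no real obstacle here; the only mild point to be careful about is that the formal logarithm used in the converse requires series with constant term $1$, which is automatic because $H(z)$ itself begins with $1$ and ring homomorphisms fix $1$. Everything else is a direct translation between $\{p_n\}$-data and $\{h_n\}$-data via the classical exponential formula.
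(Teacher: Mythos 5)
Your proof is correct and follows the same route the paper intends: the lemma is exactly the translation, via $H(z)=\exp\bigl(\sum_{n\ge 1} \tfrac{p_n}{n}z^n\bigr)$, between additivity on the power sum generators and multiplicativity of the specialized series, with the formal logarithm handling the converse. The paper treats this as immediate from the displayed generating function identity, and your write-up simply makes both directions of that equivalence explicit.
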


\subsection{Schur-positive specializations}\label{sgen}
The ring $\Lambda$ has a linear basis $\{s_{\lambda} \}$ of Schur functions that can be defined as follows
$$
s_{\lambda}(x_1, x_2, \ldots) := \sum_{T \in SSYT(\lambda)} x^{T},
$$
where $SSYT(\lambda)$ is the set of {\it semistandard Young tableaux} (SSYT) of shape $\lambda$, i.e. filings of the boxes of the Young diagram of $\lambda$ with positive integers weakly increasing in rows (from left to right) and strictly increasing in columns (from top to bottom); and $x^{T} = \prod_{i \ge 1} x_i^{a_i}$, where $a_i$  is the number of $i$'s in tableau $T \in SSYT(\lambda)$.

There is a {\it standard involutive automorphism} $\omega : \Lambda \to \Lambda$ given on generators by $\omega : h_n \mapsto e_n$ for all $n \ge 1$ and for which $\omega(s_{\lambda}) = s_{\lambda'}$.

\begin{definition}
A homomorphism $\rho : \Lambda \to \mathbb{R}$ is called {\it Schur-positive} if $\rho(s_{\lambda}) \ge 0$ for all $\lambda$.
\end{definition}

\begin{definition}
Let $\alpha, \beta, \gamma \ge 0$ be scalars. Define the following {{\textit{Schur-positive  generators}}} $\phi, \varepsilon, \pi$ given by:
\begin{itemize}
\item The specialization $\phi_{\alpha}$:  
$$\phi_\alpha(H(z)) = (1 - \alpha z)^{-1}$$ for which $\phi_\alpha (s_{\lambda}) = \alpha^{|\lambda|}$ if $\ell(\lambda) = 1$ and $0$ otherwise. Equivalently, it is just single variable substitution $x_1 \mapsto \alpha$, $x_k \mapsto 0$ for $k \ge 2$.
\item The specialization $\varepsilon_{\beta}$: 
$$\varepsilon_\beta (H(z)) = 1 + \beta z$$ for which $\varepsilon_\beta (s_{\lambda}) = \beta^{|\lambda|}$ if $\lambda_1 = 1$ and $0$ otherwise. Note that $\varepsilon_\beta = \phi_\beta \circ \omega$, i.e. a composition of $\phi$ with the involution $\omega$.
\item The {\it Plancherel specialization} $$\pi_\gamma (H(z)) = e^{\gamma z}$$ for which $\pi_\gamma (s_{\lambda}) = \gamma^{n} f^{\lambda}/n!$ for $\lambda \vdash n$, where $f^{\lambda}$ is the number of SYT of shape $\lambda$. Equivalently, it is given by $p_1 \mapsto \gamma$ and $p_k \mapsto 0$ for $k \ge 2$.  
\end{itemize}
\end{definition}

Note that union of Schur-positive specializations is also Schur-positive. This can be seen from the branching formulas 
$$
\rho(s_{\lambda}) = \sum_{\mu} \rho_1(s_{\lambda/\mu})\, \rho_2(s_{\mu}) = \sum_{\mu, \nu} c^{\lambda}_{\mu \nu}\, \rho_1(s_{\nu})\, \rho_2(s_{\mu}) \ge 0, \quad 
$$
if $\rho =(\rho_1, \rho_2) = \rho_1 \cup \rho_2$ is a union of Schur-positive specializations. 

Combining these facts with an observation made in Lemma \ref{fact}, we obtain the following equivalent formulation for characterization (Theorem~\ref{ets}) of Schur-positive specializations.
\begin{theorem}[Edrei-Thoma factorization form] \label{etfact}
A specialization $\rho : \Lambda \to \mathbb{R}$ is Schur-positive if and only if 
$$\rho = \pi_\gamma \cup \phi_{\alpha_1} \cup \phi_{\alpha_2} \cup \dots \cup \varepsilon_{\beta_1} \cup \varepsilon_{\beta_2} \cup \dots$$
for nonnegative reals 
$\gamma$, $\{\alpha_n\}$, $\{\beta_n \}$ such that $\sum_{n} (\alpha_n + \beta_n) < \infty$.
\end{theorem}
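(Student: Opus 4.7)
The plan is to derive Theorem~\ref{etfact} as a direct repackaging of Theorem~\ref{ets} via the generating-function dictionary of Lemma~\ref{fact}, together with the elementary observation (made in the paragraph preceding the theorem) that unions of Schur-positive specializations remain Schur-positive.

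For the ``if'' direction, I would first verify that each of the three basic specializations $\phi_\alpha$, $\varepsilon_\beta$, $\pi_\gamma$ is Schur-positive: this is immediate from their defining values on Schur functions, since $\alpha^{|\lambda|}, \beta^{|\lambda|}, \gamma^n f^\lambda/n! \ge 0$ for $\alpha,\beta,\gamma \ge 0$. Then I would invoke the branching formula $\rho(s_\lambda) = \sum_{\mu,\nu} c^\lambda_{\mu\nu}\rho_1(s_\nu)\rho_2(s_\mu)$ already displayed in the text, along with nonnegativity of the Littlewood--Richardson coefficients, to conclude that finite unions of Schur-positive specializations are Schur-positive. To handle the infinite union, I would observe that the condition $\sum_n(\alpha_n+\beta_n) < \infty$ guarantees the infinite product $\prod_n \phi_{\alpha_n}(H(z))\, \varepsilon_{\beta_n}(H(z))$ converges coefficient-wise as a formal power series in $z$, so that $\rho(h_n)$ is a well-defined nonnegative real for every $n$; positivity on $s_\lambda$ then follows by monotone passage to the limit from finite unions, or equivalently by applying the original Theorem~\ref{ets} in the reverse direction.

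For the ``only if'' direction, I would start from $\rho$ Schur-positive, apply Theorem~\ref{ets} to write
\[
\rho(H(z)) = e^{\gamma z}\prod_{n=1}^\infty \frac{1+\beta_n z}{1-\alpha_n z}
\]
with $\gamma,\alpha_n,\beta_n \ge 0$ and $\sum_n(\alpha_n+\beta_n)<\infty$. The key observation is then that the three factors on the right-hand side match exactly the three kinds of Schur-positive generators:
\[
\pi_\gamma(H(z)) = e^{\gamma z}, \qquad \phi_{\alpha}(H(z)) = (1-\alpha z)^{-1}, \qquad \varepsilon_\beta(H(z)) = 1+\beta z.
\]
Hence
\[
\rho(H(z)) = \pi_\gamma(H(z)) \cdot \prod_{n} \phi_{\alpha_n}(H(z)) \cdot \prod_{n} \varepsilon_{\beta_n}(H(z)),
\]
and Lemma~\ref{fact} (extended to infinite unions via the same summability condition that controls the product) translates this factorization into the union decomposition $\rho = \pi_\gamma \cup \bigcup_n \phi_{\alpha_n} \cup \bigcup_n \varepsilon_{\beta_n}$.

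The main obstacle is the rigorous handling of the countably infinite union: Lemma~\ref{fact} is stated for two specializations, so one must check that extending ``union'' to infinitely many terms is consistent, i.e., that $\rho(p_n) = \gamma\cdot[n=1] + \sum_k \alpha_k^n + (-1)^{n-1}\sum_k \beta_k^n$ is finite for each $n$ (this is precisely ensured by $\sum_n(\alpha_n+\beta_n)<\infty$) and that the corresponding $H(z)$ factors multiply to the displayed product. Once this convergence is in hand, the theorem is a formal consequence of Theorem~\ref{ets} and Lemma~\ref{fact}.
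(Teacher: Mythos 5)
Your proposal is correct and follows essentially the same route as the paper, which obtains Theorem~\ref{etfact} as a reformulation of Theorem~\ref{ets} by combining Lemma~\ref{fact} (union of specializations corresponds to multiplying the $\rho(H(z))$ series), the explicit values $\pi_\gamma(H(z))=e^{\gamma z}$, $\phi_\alpha(H(z))=(1-\alpha z)^{-1}$, $\varepsilon_\beta(H(z))=1+\beta z$, and the branching-formula argument that unions of Schur-positive specializations are Schur-positive. Your extra care about the countably infinite union matches the paper's remark that the condition $\sum_n(\alpha_n+\beta_n)<\infty$ is exactly what makes all $\rho(p_k)$ converge, so nothing essential differs.
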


\begin{remark}
In terms of the power sum generators, Schur-positive specializations $\rho$ are given by 
$$
\rho(p_k) = \pi_\gamma(p_k) + \sum_{n} \phi_{\alpha_n}(p_k) + \sum_{n} \varepsilon_{\beta_n}(p_k) \text{ for } k \ge 1,
$$
or more precisely we have
\begin{align*}
\rho &: p_1 \longmapsto \gamma + \sum_{n} (\alpha_n + \beta_n)\quad \text{ and }\quad \rho : p_k \longmapsto \sum_{n} \left((\alpha_n)^k + (-1)^{k-1}(\beta_n)^k\right) \text{ for } k \ge 2.
\end{align*}
The condition $\sum_{n} (\alpha_n + \beta_n) < \infty$ is necessary and sufficient for all $p_k$ to converge.
\end{remark}

\begin{remark}
Product of generating functions that specialize $H(z)$ corresponds to product of associated Toeplitz matrices so that $(1 - \alpha z)^{-1}$, $(1 + \beta z)$, $e^{\gamma z}$ generate totally positive functions $\rho(H(z))$.  
\end{remark}

\section{Symmetric Grothendieck polynomials}\label{gback}
Let us give some notation for Young diagrams. We use definitions introduced in \cite{dy2}. 

 \ytableausetup{smalltableaux}
Denote by $I(\lambda)$ the set of {\it inner corner} boxes of $\lambda$ and $i(\lambda) = \# I(\lambda)$. For partitions $\lambda \supset \mu$ define the following extension of skew shapes: $$\lambda/\!\!/\mu := \lambda/\mu \cup I(\mu).$$ E.g. $(5331)/\!\!/(432)$  consists of the skew shape $(5331)/(432) = \{\scriptsize\ydiagram[*(lightgray)]{1}\}$  and $I(432) = \{\scriptsize\ydiagram[*(lightgray)\bullet]{1} \}$, see the picture.

\begin{center}
 \ytableausetup{smalltableaux}
 \begin{ytableau}
~ & ~ & ~ & *(lightgray)\bullet & *(lightgray)\\
~ & ~ & *(lightgray)\bullet \\
~ & *(lightgray){\bullet} & *(lightgray) \\
*(lightgray)
\end{ytableau}
\end{center}

Denote by $a(\lambda/\!\!/\mu)$ the number of {\it open boxes} of $I(\mu)$ that do not lie in the same column with any box of $\lambda/\mu$. Equivalently, it is the number of columns of $\lambda/\!\!/\mu$ that are not columns of $\lambda/\mu$. For example, $a((5331/\!\!/(432))) = 2$, the boxes $\ydiagram[*(lightgray)\text{o}]{1}$ in the picture.

\begin{center}
\begin{ytableau}
~ & ~ & ~ & *(lightgray)\text{o} & *(lightgray)\\
~ & ~ & *(lightgray)\bullet \\
~ & *(lightgray)\text{o} & *(lightgray) \\
*(lightgray)
\end{ytableau}
\end{center}

Denote by $c(\lambda/\mu)$ and $r(\lambda/\mu)$ the number of (nonempty) columns and rows of $\lambda/\mu$, respectively. 

We say that $\lambda/\mu$ is a {\it horizontal} (resp. {\it vertical}) {\it strip} if $\lambda/\mu$ has no two boxes in the same column (resp. row). Say that $\lambda/\mu$ is a {\it rook strip} if no two boxes  of $\lambda/\mu$ lie in the same row and column, equivalently when $|\lambda/\mu| = c(\lambda/\mu) = r(\lambda/\mu)$, e.g.  $(5331)/(432)$ is a rook strip, see Fig.~\ref{svtfig} gray boxes.

\begin{definition}
A {\it set-valued tableau} (SVT) of shape $\lambda/\!\!/\mu = \lambda/\mu \cup I(\mu)$ is a filling of the boxes of $\lambda/\!\!/\mu$ by {\it sets} of positive integers such that 
\begin{itemize}
\item numbering is {\it semistandard}, i.e. if we replace each set by any of its elements, then the numbers increase from left to right and from top to bottom;
\item each box of $\lambda/\mu$ contains a non-empty set;
\item each box of $I(\mu)$ contains a set (maybe empty).
\end{itemize}
Let $SVT(\lambda/\!\!/\mu)$ be the set of SVT of shape $\lambda/\!\!/\mu$. For $T \in SVT(\lambda/\!\!/\mu)$, define the corresponding monomial $x^{T} = \prod_{i \ge 1} x_i^{a_i}$, where $a_i$ is the number of $i$'s in $T$. 
See the example in Fig.~\ref{svtfig}. 
\begin{figure}
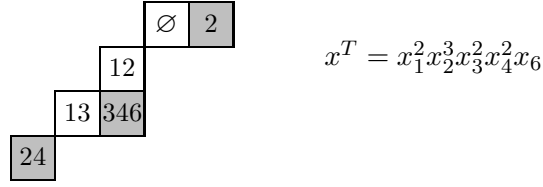

\begin{center}
{
 \ytableausetup{boxsize=normal}
 \begin{ytableau}
\none & \none & \none & \varnothing & *(lightgray) $\small 2$\\
\none & \none & $\small 12$ \\
\none & $\small 13$ & *(lightgray) ${\small 346}$ \\
*(lightgray) $\small 24$
\end{ytableau}
}
\qquad 
\begin{tabular}{c}
\\
$x^T = x_1^2 x_2^3 x_3^2 x_4^2 x_6$ \\ 
\end{tabular}
\end{center}
\caption{\small A set-valued tableau of shape $(5331)/\!\!/(432) = (5331)/(432) \cup I(432)$. The gray boxes is the skew shape $(5331)/(432)$; the white boxes is $I(432)$, corners of $(432)$. 
It is allowed to put $\varnothing$ in $I(\mu)$ but boxes of $\lambda/\mu$ must be nonempty.}
\label{svtfig}
\end{figure}
\end{definition}

We use the notation $\mathbf{x} = (x_1, x_2, \ldots)$ and $\mathbf{y} = (y_1, y_2, \ldots)$ for (infinite) sets of variables. 

\begin{definition} {\it Symmetric Grothendieck polynomials} $\tilde G^{}_{\lambda/\!\!/\mu}$ are defined via the following series: 
$$
\tilde G^{}_{\lambda/\!\!/\mu}(\mathbf{x}) := \sum_{T \in SVT(\lambda/\!\!/\mu)} 
x^T
$$
\end{definition}
In particular, $\tilde G^{}_{\lambda/\!\!/\varnothing} = \tilde G_{\lambda}$ for straight shapes. 
It is easy to see that 
$$\tilde G_{(1)} = -1 + \prod_{n}(1 + x_n) = e_1 + e_2 + \ldots$$
Note also that $$\tilde G_{\lambda/\!\!/\lambda} = \prod_{n}(1 + x_n)^{i(\lambda)} \ne \tilde G_{\varnothing} = 1.$$
\begin{proposition}[\cite{dy2}] \label{prop1}
The following branching formula holds:
\begin{equation}\label{br}
\tilde G^{}_{\lambda/\!\!/\mu}(\mathbf{x}, \mathbf{y}) = \sum_{\nu} \tilde G^{}_{\lambda/\!\!/\nu}(\mathbf{x}) \tilde G^{}_{\nu/\!\!/\mu}(\mathbf{y})
\end{equation}
For a single variable $x$ we have
\begin{equation}\label{single}
\tilde G^{}_{\lambda/\!\!/\mu}(x) = 
	\begin{cases}
		(1 + x)^{a(\lambda/\!\!/\mu)} x^{|\lambda/\mu|}, & \text{ if $\lambda/\mu$ is a horizontal strip;}\\
		0, & \text{ otherwise.} 
	\end{cases}
\end{equation}
\end{proposition}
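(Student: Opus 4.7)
The plan is to prove \eqref{br} by an explicit weight-preserving bijection on set-valued tableaux, and \eqref{single} by direct inspection of what SVTs look like when only one variable is available.

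For the branching formula \eqref{br}, fix the total order in which every $\mathbf{x}$-variable is strictly less than every $\mathbf{y}$-variable. Given $T \in SVT(\lambda/\!\!/\mu)$ filled from $\mathbf{x} \cup \mathbf{y}$, define the intermediate partition
\begin{equation*}
\nu := \mu \cup \{(i,j) \in \lambda/\mu : T(i,j) \text{ contains an entry from } \mathbf{x}\}.
\end{equation*}
Semistandardness of $T$ forces $\nu$ to be a partition: if $(i,j) \in \lambda/\mu$ contains an $\mathbf{x}$-entry, then any cell of $\lambda/\mu$ immediately above or to the left has all entries bounded by that $\mathbf{x}$-entry, hence itself contains only $\mathbf{x}$-entries and lies in $\nu$. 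Let $T_1$ be the restriction of $T\cap \mathbf{x}$ to $\nu/\!\!/\mu$ and $T_2$ the restriction of $T\cap \mathbf{y}$ to $\lambda/\!\!/\nu$. The key step is verifying that any $\mathbf{y}$-entry in a cell of $\nu$ (as opposed to $\lambda/\nu$) actually lands in a cell of $I(\nu)$: a cell $(i,j)$ with a $\mathbf{y}$-entry forces its right- and down-neighbours inside $\lambda$ to contain only $\mathbf{y}$-entries, so those neighbours lie outside $\nu$, which means $(i,j) \in I(\nu)$ whenever $(i,j) \in \nu$. The inverse reassembles $T$ by cellwise union, and semistandardness survives because $\mathbf{x}$-entries are globally below $\mathbf{y}$-entries. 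Multiplicativity of the monomial, $x^T = x^{T_1}y^{T_2}$, gives \eqref{br}.

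For the single-variable formula \eqref{single}, I argue directly. With only the variable $x$, every entry of an SVT is $1$, and the strict column inequality forbids two non-empty entries in a column. If $\lambda/\mu$ is not a horizontal strip, two boxes of $\lambda/\mu$ share a column, both must be non-empty (hence both equal $\{1\}$), contradiction; so $\tilde G_{\lambda/\!\!/\mu}(x)=0$. Assume now that $\lambda/\mu$ is a horizontal strip. Every cell of $\lambda/\mu$ is forced to equal $\{1\}$, contributing $x^{|\lambda/\mu|}$. For a cell $(i,j) \in I(\mu)$: if $(i+1,j) \in \lambda/\mu$ (which, under the horizontal-strip hypothesis, is the only way column $j$ can contain a box of $\lambda/\mu$ below $(i,j)$, since intermediate cells would have to lie in $\mu$, violating $(i,j) \in I(\mu)$), then the strict column rule forces $(i,j)=\varnothing$; otherwise $(i,j)$ is freely either $\varnothing$ or $\{1\}$. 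The row inequality is automatic because all non-empty entries equal $1$. The number of free $I(\mu)$-boxes is exactly the count $a(\lambda/\!\!/\mu)$ of inner corners in columns disjoint from $\lambda/\mu$, producing the factor $(1+x)^{a(\lambda/\!\!/\mu)}$.

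The main obstacle is the inner-corner bookkeeping in \eqref{br}: both $\nu/\!\!/\mu$ and $\lambda/\!\!/\nu$ carry their own extensions by inner corners, and a single cell of $\lambda/\!\!/\mu$ holding both $\mathbf{x}$- and $\mathbf{y}$-entries must be simultaneously recorded on both sides of the split. Verifying that precisely the right cells become members of $I(\nu)$ under the splitting is the crux of the bijection, and it is exactly where the extension of skew shapes by inner corners in the definition of $\lambda/\!\!/\mu$ is needed. Once this identification is set up correctly, the semistandard conditions for $T_1$ and $T_2$ follow from those of $T$ by a straightforward comparison of minima and maxima across the $\mathbf{x}/\mathbf{y}$ cut.
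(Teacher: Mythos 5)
Your argument is correct in substance, and it is worth noting that the paper itself gives no proof of this proposition: it is imported from \cite{dy2}, so your direct weight-preserving bijection on set-valued tableaux is a legitimate self-contained route, and the single-variable computation for \eqref{single} is exactly the kind of direct inspection one would expect (your bookkeeping of which boxes of $I(\mu)$ are free, i.e.\ lie in columns disjoint from $\lambda/\mu$, matches the definition of $a(\lambda/\!\!/\mu)$). Two small points should be tightened. First, with your ordering ($\mathbf{x}$-letters below $\mathbf{y}$-letters) the bijection as you set it up yields $\tilde G_{\lambda/\!\!/\mu}(\mathbf{x},\mathbf{y})=\sum_{\nu}\tilde G_{\nu/\!\!/\mu}(\mathbf{x})\,\tilde G_{\lambda/\!\!/\nu}(\mathbf{y})$, which is \eqref{br} with the roles of $\mathbf{x}$ and $\mathbf{y}$ interchanged; this is harmless since $\tilde G_{\lambda/\!\!/\mu}$ is symmetric in the full alphabet (or simply declare the $\mathbf{y}$-letters smaller), but you should say so. Second, in the inverse direction the justification ``semistandardness survives because $\mathbf{x}$-entries are globally below $\mathbf{y}$-entries'' is not quite the whole story: one must also exclude the configuration of a cell containing only $\mathbf{y}$-entries sitting immediately to the left of (or above) a cell containing an $\mathbf{x}$-entry, and this uses that $T_2$'s support inside $\nu$ is confined to $I(\nu)$ together with the partition property of $\nu$ (if the right/lower cell carries an $\mathbf{x}$-entry it lies in $\nu$, forcing the left/upper cell into $\nu$, hence into $I(\nu)$, contradicting that its outer neighbour is in $\nu$); likewise one should record the containment $I(\nu)\subset \nu/\mu\cup I(\mu)$, so that the reassembled filling really lives on $\lambda/\!\!/\mu$. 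Both verifications are routine and symmetric to the forward-direction crux you did carry out, so these are completions rather than gaps.
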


We also have the following {\it Pieri rules} first proved by Lenart \cite{lenart}
\begin{equation}\label{gpieri0}
\tilde G_{(k)} \cdot \tilde G_{\lambda} = \sum_{\mu/\lambda \text{ hor. strip}} \binom{r(\mu/\lambda) - 1}{|\mu/\lambda| - k} \tilde G_{\mu} 
\end{equation}
and in particular the simple Pieri rule
$$
\tilde G_{(1)} \cdot \tilde G_{\lambda} = \sum_{\mu/\lambda \text{ rook strip}} \tilde G_{\mu}
$$

We have $\tilde G_{\lambda} = s_{\lambda} + \{\text{higher degree terms}\} \in \hat\Lambda$ are elements of the completion $\hat\Lambda$ of the ring $\Lambda$ consisting of unbounded degree elements. Since the lowest degree component is the Schur basis $\{s_{\lambda}\}$, the functions $\{\tilde G_{\lambda}\}$ are linearly independent. To describe transition coefficients with the Schur basis let us introduce two types of tableaux.  
\begin{definition}[\cite{lenart, lp}]\label{eleg}
Define the following tableaux:
\begin{itemize}
\item 
A {\it strict elegant tableau} of shape $\mu/\lambda$ is an SYT (i.e. strictly increasing in rows and columns)  
whose entries in row $i$ lie in $[1, i-1]$ for all $i$. Let $\mathrm{r}_{\mu/\lambda}$ be the number of strict elegant tableaux of shape $\mu/\lambda$. In particular,  if $\mathrm{r}_{\mu/\lambda} > 0$ then $\mu_1 = \lambda_1$.
\item An {\it elegant tableau}  of shape $\mu/\lambda$ is an SSYT 
whose entries in row $i$ lie in $[1, i - 1]$ for all $i$. 
Let $\mathrm{f}_{\mu/\lambda}$ be the number of elegant tableaux of shape $\mu/\lambda$. In particular, if $\mathrm{f}_{\mu/\lambda} > 0$ then $\mu_1 = \lambda_1$.
\end{itemize}
\end{definition}
Then we have the following infinite expansions \cite{lenart}
\begin{equation}\label{gschur}
\tilde G_{\lambda} = \sum_{\mu \supset \lambda} \mathrm{r}_{\mu/\lambda}\, s_{\mu}, \qquad\qquad s_{\lambda} = \sum_{\mu \supset \lambda} (-1)^{|\mu/\lambda|} \mathrm{f}_{\mu/\lambda}\, \tilde G^{}_{\mu}.
\end{equation}

As discussed in the introduction, for all $\mu, \nu$ the product 
\begin{equation}\label{prodg}
\tilde G_{\mu} \cdot \tilde G_{\nu} = \sum_{\lambda} c^{\lambda}_{\mu \nu}\, \tilde G_{\lambda}, \quad |\lambda| \ge |\mu| + |\nu|, \quad c^{\lambda}_{\mu \nu} \in \mathbb{Z}_{+}
\end{equation}
expands as a finite sum; here $c^{\lambda}_{\mu \nu}$ are generalized LR coefficients \cite{buch}. Hence we can define the commutative ring 
$$
\Gamma := \bigoplus_{\lambda} \mathbb{R} \cdot \tilde G_{\lambda}
$$
with a formal basis $\{ \tilde G_{\lambda}\}$ and the product given by \eqref{prodg}. There is an involutive automorphism $\tau: \Gamma \to \Gamma$ given by $\tau : \tilde G^{}_{\lambda} \mapsto \tilde G^{}_{\lambda'}$ (see \cite{buch, dy}).
We have $\tau (\tilde G_{\lambda/\!\!/\mu}) = \tilde G_{\lambda'/\!\!/\mu'}$ (see \cite{dy2}).

We also have finite expansions (see \cite{buch}) 
\begin{equation}\label{gbr}
\tilde G_{\lambda}(\mathbf{x}, \mathbf{y}) = \sum_{\mu, \nu} d^{\lambda}_{\mu \nu}\, \tilde G_{\mu}(\mathbf{x})\, \tilde G_{\nu}(\mathbf{y}), \quad d^{\lambda}_{\mu \nu} \in \mathbb{Z}_{+}
\end{equation} 
\begin{equation}\label{pprod}
\tilde G_{\lambda/\!\!/\mu} = \sum_{\nu} d^{\lambda}_{\mu \nu}\, \tilde G_{\nu} \in \Gamma. 
\end{equation}
Note that the structure coefficients are the same in both expansions which follows from branching formulas for $\tilde G_{\lambda}$ (or Hopf-algebraic properties). 

\subsection{Specializations of the ring $\Gamma$}
A {\it specialization} of $\Gamma$ is any homomorphism $\Gamma \to \mathbb{R}$.
\begin{definition}\label{uu}
Let $\varphi_1, \varphi_2$ be homomorphisms of $\Gamma$. Define their {\it union} $\varphi = (\varphi_1, \varphi_2) = \varphi_1 \cup \varphi_2$ such that for all $\lambda$ we have
$$
\tilde G_{\lambda}(\varphi) = \tilde G_{\lambda}(\varphi_1, \varphi_2) = \sum_{\nu} \tilde G^{}_{\lambda/\!\!/\nu}(\varphi_1)\, \tilde G^{}_{\nu}(\varphi_2).
$$
\end{definition}
By the branching formula \eqref{br}, it is well-defined and is compatible with union of specializations of $\Lambda$ as we discuss below. 
\begin{definition}
Let $\rho$ be a specialization of $\Lambda$. Say that $\rho$ {\it extends} to a specialization of $\Gamma$ if $\tilde G_{\lambda}$ are well-defined as an image under $\rho$. Formally, if the homomorphism $\hat\rho : \Gamma \to \mathbb{R}$ given by 
\begin{equation}
\tilde G_{\lambda}(\hat\rho) := \sum_{\mu} \mathrm{r}_{\mu/\lambda}\, \rho(s_{\mu}) 
\end{equation}
is well-defined. In particular, all such infinite sums converge. 
\end{definition}
\begin{lemma}
Let $\rho_1, \rho_2$ be specializations of $\Lambda$ that {\it extend} to specializations $\hat\rho_1, \hat\rho_2$ of $\Gamma$. Then the union $\rho = (\rho_1, \rho_2)$ of $\Lambda$ extends to the specialization $\hat\rho = (\hat\rho_1, \hat\rho_2)$ of $\Gamma$. 
\end{lemma}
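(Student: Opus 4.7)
The plan is to apply $\rho_1$ to the $\mathbf{x}$-variables and $\rho_2$ to the $\mathbf{y}$-variables of the branching formula \eqref{br} at $\mu=\varnothing$,
\[
\tilde G_\lambda(\mathbf{x},\mathbf{y}) = \sum_{\nu\subset\lambda} \tilde G_{\lambda/\!\!/\nu}(\mathbf{x})\, \tilde G_\nu(\mathbf{y}),
\]
which is a \emph{finite} sum. Under this specialization, the right-hand side becomes $\sum_{\nu\subset\lambda}\hat\rho_1(\tilde G_{\lambda/\!\!/\nu})\,\hat\rho_2(\tilde G_\nu)$, a finite combination of well-defined real numbers: by \eqref{pprod}, $\tilde G_{\lambda/\!\!/\nu}$ is a finite $\Gamma$-linear combination of $\tilde G_\sigma$'s, on each of which $\hat\rho_1$ is defined. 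By Definition~\ref{uu}, this finite sum equals $\tilde G_\lambda(\hat\rho_1\cup\hat\rho_2)$.

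The task is then to show that, after the same specialization, the left-hand side yields the series $\sum_{\mu\supset\lambda} r_{\mu/\lambda}\,\rho(s_\mu)$ (with $\rho=\rho_1\cup\rho_2$), which must then both converge and equal the finite sum above. To this end, I would expand both sides of the branching formula in the Schur basis of $\hat\Lambda(\mathbf{x})\otimes\hat\Lambda(\mathbf{y})$: the LHS via \eqref{gschur} together with the classical Schur branching $s_\mu(\mathbf{x},\mathbf{y})=\sum_\kappa s_{\mu/\kappa}(\mathbf{x})\,s_\kappa(\mathbf{y})$, and the RHS by first expanding each $\tilde G_{\lambda/\!\!/\nu}$ in the Schur basis (with nonnegative integer coefficients obtained from \eqref{pprod} and \eqref{gschur}). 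Equating coefficients of $s_\tau(\mathbf{x})\,s_\kappa(\mathbf{y})$ produces a combinatorial identity with nonnegative integers on both sides. Pairing this identity with $\rho_1(s_\tau)\rho_2(s_\kappa)$ and summing over $\tau,\kappa$, the RHS collapses back to $\sum_{\nu\subset\lambda}\hat\rho_1(\tilde G_{\lambda/\!\!/\nu})\hat\rho_2(\tilde G_\nu)$, while the LHS collapses, via the finite identity $\rho(s_\mu)=\sum_\kappa\rho_1(s_{\mu/\kappa})\rho_2(s_\kappa)$, to $\sum_{\mu\supset\lambda} r_{\mu/\lambda}\,\rho(s_\mu)$. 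This simultaneously proves convergence of the latter series and the equality $\hat\rho(\tilde G_\lambda)=\tilde G_\lambda(\hat\rho_1\cup\hat\rho_2)$, and shows that $\hat\rho_1\cup\hat\rho_2$, \emph{a priori} only a linear functional, is in fact a homomorphism (since the extension $\hat\rho$ of $\rho$ is).

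The main obstacle is justifying the interchange of summations in the previous step. Since the combinatorial coefficients $r_{\mu/\lambda}$, $c^\mu_{\kappa\tau}$, etc., are all nonnegative, what is needed is absolute convergence of the Schur-indexed series defining $\hat\rho_i(\tilde G_\sigma)$; this absolute convergence is naturally implicit in the hypothesis that the $\rho_i$ \emph{extend} to $\hat\rho_i$ (and is automatic in the Schur-positive settings of main interest in the paper, where all the $\rho_i(s_\mu)$ are nonnegative). Granting this, the convergence of $\sum_\mu r_{\mu/\lambda}\,\rho(s_\mu)$ and the identification $\hat\rho=\hat\rho_1\cup\hat\rho_2$ both follow, completing the proof.
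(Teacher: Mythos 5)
Your proposal is correct and follows essentially the same route as the paper: the paper likewise equates the two expansions of $\tilde G_{\lambda}(\mathbf{x},\mathbf{y})$ --- via the Grothendieck branching/coproduct \eqref{br},\eqref{gbr},\eqref{pprod} with Schur expansions \eqref{gschur} of each factor on one side, and via \eqref{gschur} followed by the classical Schur branching $s_{\theta}(\mathbf{x},\mathbf{y})=\sum_{\eta,\kappa}c^{\theta}_{\eta\kappa}s_{\eta}(\mathbf{x})s_{\kappa}(\mathbf{y})$ on the other --- and then applies $\rho_1\otimes\rho_2$ to identify $\sum_{\nu}\tilde G_{\lambda/\!\!/\nu}(\hat\rho_1)\,\tilde G_{\nu}(\hat\rho_2)$ with $\sum_{\theta}\mathrm{r}_{\theta/\lambda}\,\rho(s_{\theta})$. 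Your explicit remark about the interchange of summations (absolute/unconditional convergence being built into the well-definedness of the extensions $\hat\rho_i$) is a point the paper passes over silently, but it does not change the substance of the argument.
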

\begin{proof}
Follows from the branching formulas \eqref{br}, \eqref{gbr} and Schur expansions \eqref{gschur}. Namely, we have the following identities
\begin{align}\label{gxy1}
	\tilde G_{\lambda}(\mathbf{x}, \mathbf{y}) 
		= \sum_{\mu, \nu} d^{\lambda}_{\mu \nu}\, \tilde G_{\mu}(\mathbf{x})\, \tilde G_{\nu}(\mathbf{y})
		= \sum_{\mu, \nu} d^{\lambda}_{\mu \nu}\, \sum_{\eta} \mathrm{r}_{\eta/\mu}\, s_{\eta}(\mathbf{x}) \sum_{\kappa} \mathrm{r}_{\kappa/\nu}\, s_{\kappa}(\mathbf{y}).
\end{align}
On the other hand,
\begin{align}\label{gxy2}
	\tilde G_{\lambda}(\mathbf{x}, \mathbf{y}) 
		= \sum_{\theta} \mathrm{r}_{\theta/\lambda}\, s_{\theta}(\mathbf{x}, \mathbf{y})
		= \sum_{\theta}  \mathrm{r}_{\theta/\lambda}\, \sum_{\eta, \kappa} c^{\theta}_{\eta \kappa}\, s_{\eta}(\mathbf{x})\, s_{\kappa}(\mathbf{y}).
\end{align}
Combining these identities by applying $\rho$ with the definition of unions we obtain
\begin{align*}
	\tilde G_{\lambda}(\hat\rho) 
		&= \sum_{\mu, \nu} d^{\lambda}_{\mu \nu}\, \tilde G_{\mu}(\hat\rho_1)\, \tilde G_{\nu}(\hat\rho_2)\\
		(\text{by \eqref{gxy1}})\quad &= \sum_{\mu, \nu} d^{\lambda}_{\mu \nu}\, \sum_{\eta} \mathrm{r}_{\eta/\mu}\, \rho_1(s_{\eta}) \sum_{\kappa} \mathrm{r}_{\kappa/\nu}\, \rho_2(s_{\kappa})\\
		(\text{by \eqref{gxy2}})\quad &= \sum_{\theta}  \mathrm{r}_{\theta/\lambda}\, \sum_{\eta, \kappa} c^{\theta}_{\eta \kappa}\, \rho_1(s_{\eta})\, \rho_2(s_{\kappa})\\
		&= \sum_{\theta} \mathrm{r}_{\theta/\lambda}\, \rho(s_{\theta})
\end{align*}
as needed.
\end{proof}

\section{Grothendieck-positive specializations}\label{gps}
\begin{definition}
A homomorphism 
$\varphi : \Gamma \to \mathbb{R}$ is called {\it $G$-positive} if $\tilde G_{\lambda}(\varphi) \ge 0$ for all $\lambda$.
\end{definition}
Define the following positive cone in $\Gamma$ of nonnegative linear combinations of $\{\tilde G^{}_{\lambda} \}$: 
$$\Gamma_+ := \bigoplus_{\lambda} \mathbb{R}_{\ge 0} \cdot \tilde G^{}_{\lambda}\ \subset\ \Gamma$$ 
Note that it is 
closed under multiplication since products of symmetric Grothendieck polynomials expand positively 
in $\{\tilde G^{}_{\lambda} \}$. 
\begin{lemma}
Let $\varphi$ be a $G$-positive homomorphism of $\Gamma$. Then $\tilde G_{\lambda/\!\!/\mu}(\varphi) \ge 0$.
\end{lemma}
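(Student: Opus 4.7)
The plan is to apply the homomorphism $\varphi$ directly to the finite expansion \eqref{pprod}. Recall that
$$\tilde G_{\lambda/\!\!/\mu} = \sum_{\nu} d^{\lambda}_{\mu\nu}\, \tilde G_{\nu} \in \Gamma,$$
where, by \eqref{gbr}, the coefficients $d^{\lambda}_{\mu\nu}$ are nonnegative integers and the sum is finite (so the expression genuinely lies in $\Gamma$, not merely in the completion $\hat\Gamma$). In other words, $\tilde G_{\lambda/\!\!/\mu}$ belongs to the positive cone $\Gamma_+ := \bigoplus_{\lambda} \mathbb{R}_{\ge 0}\cdot \tilde G_{\lambda}$.

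Given this, the proof is immediate: since $\varphi$ is a homomorphism, we evaluate
$$\tilde G_{\lambda/\!\!/\mu}(\varphi) \;=\; \varphi\!\left(\sum_{\nu} d^{\lambda}_{\mu\nu}\, \tilde G_{\nu}\right) \;=\; \sum_{\nu} d^{\lambda}_{\mu\nu}\, \tilde G_{\nu}(\varphi),$$
and each summand is a product of the nonnegative integer $d^{\lambda}_{\mu\nu}$ with the nonnegative real $\tilde G_{\nu}(\varphi)$, the latter by the $G$-positivity hypothesis. Hence the total sum is nonnegative.

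More conceptually, the statement expresses that any $G$-positive homomorphism is nonnegative on all of $\Gamma_+$, and the content of \eqref{pprod} is precisely that $\tilde G_{\lambda/\!\!/\mu} \in \Gamma_+$. There is no real obstacle here; the only thing worth emphasizing is that one needs to know the sum in \eqref{pprod} is finite (so that applying $\varphi$ term-by-term is legitimate without any convergence discussion), which is exactly Buch's finiteness property recalled in \eqref{gbr}.
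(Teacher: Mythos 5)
Your proposal is correct and follows exactly the paper's argument: the paper also deduces the lemma from the fact that $\tilde G_{\lambda/\!\!/\mu} \in \Gamma_+$, via the finite nonnegative expansion \eqref{pprod}. You simply spell out the term-by-term application of $\varphi$, which the paper leaves implicit.
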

\begin{proof}
Follows from the fact $\tilde G_{\lambda/\!\!/\mu} \in \Gamma_+$ is Grothendieck-positive, see \eqref{pprod}.
\end{proof}

\begin{lemma}
Let $\varphi_1, \varphi_2$ be $G$-positive homomorphisms of $\Gamma$. Then the union $\varphi = (\varphi_1, \varphi_2)$ is also $G$-positive.
\end{lemma}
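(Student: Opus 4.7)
The plan is to reduce this directly to the two facts already in hand: the definition of the union from Definition~\ref{uu}, and the preceding lemma showing that $\tilde G_{\lambda/\!\!/\nu}$ evaluates nonnegatively under any $G$-positive homomorphism.

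First I would unpack the definition: by Definition~\ref{uu},
\[
\tilde G_{\lambda}(\varphi) = \sum_{\nu} \tilde G^{}_{\lambda/\!\!/\nu}(\varphi_1)\, \tilde G^{}_{\nu}(\varphi_2).
\]
The sum on the right is actually a finite sum. This is because of the expansion \eqref{pprod}, $\tilde G_{\lambda/\!\!/\nu} = \sum_{\eta} d^{\lambda}_{\nu \eta}\, \tilde G_{\eta}$, which is a finite expansion in $\Gamma$, so only finitely many $\nu$ can contribute a nonzero $\tilde G_{\lambda/\!\!/\nu}$ supported inside $\lambda$; no convergence issue arises.

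Next I would argue term-by-term nonnegativity. The preceding lemma gives $\tilde G_{\lambda/\!\!/\nu}(\varphi_1) \ge 0$ for every $\nu$, since $\varphi_1$ is $G$-positive and $\tilde G_{\lambda/\!\!/\nu} \in \Gamma_+$ by \eqref{pprod} (the coefficients $d^{\lambda}_{\nu \eta}$ are nonnegative integers). Simultaneously $\tilde G_{\nu}(\varphi_2) \ge 0$ by $G$-positivity of $\varphi_2$. Hence each summand is a product of two nonnegative reals, and therefore $\tilde G_{\lambda}(\varphi) \ge 0$, as required.

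There is no real obstacle here: the multiplicativity of $\varphi$ (i.e.\ that the union is a bona fide homomorphism) is already asserted in the paragraph following Definition~\ref{uu} as a consequence of the branching formula \eqref{br}, so for this lemma the only content is the positivity, which the above two-line argument delivers. The only point to be careful about is making explicit that the sum is genuinely finite, so that "sum of nonnegative terms is nonnegative" is applied without subtlety.
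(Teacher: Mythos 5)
Your argument is correct and is essentially the paper's own proof, which likewise deduces the lemma immediately from Definition~\ref{uu} together with the preceding lemma that $\tilde G_{\lambda/\!\!/\nu}(\varphi_1)\ge 0$. Your extra remark that the sum over $\nu$ is finite (only $\nu\subset\lambda$ contribute, via \eqref{pprod}) is a harmless and valid bit of added care that the paper leaves implicit.
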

\begin{proof}
Immediate from the definition~\ref{uu} of union and previous lemma. 
\end{proof}

Consider the elements $H^{}_{n} \in \Gamma_+$ defined as follows
$$H_0^{} := 1 + \tilde G^{}_{(1)}, \quad H_n^{} := \tilde G^{}_{(n)} + \tilde G^{}_{(n + 1)}\ 
\text{ for } n > 0, \text{ and } H^{}_{n} = 0 \text{ for } n < 0.$$

For any partition $\mu$, define the function
\begin{equation}\label{flam}
F_{\mu} := \det\left[H^{}_{\mu_i - i + j}\right]_{1 \le i, j \le \ell(\mu)} \in \Gamma
\end{equation}
To describe expansion of $F_{\mu}$ in the basis $\{\tilde G_{\lambda} \}$, we need to define the following types of tableaux.
\begin{definition}
A {\it delegant tableau} of shape $\lambda/\mu$ is a filling of the boxes of $\lambda/\mu$ with positive integers such that 
\begin{itemize}
\item the entries weakly decrease in rows from left to right and strictly decrease from top to bottom
\item  the elements in row $i$ lie in $[\lambda_i,\ \ell + 1 + \mu_i - i]$, where $\ell = \ell(\mu)$.
\end{itemize}
Let $\mathrm{d}_{\lambda/\mu}$ be the number of delegant tableaux of shape $\lambda/\mu$. Note that from the second property we must have $\lambda_1 \le \ell + \mu_1$ and $\ell(\lambda) =\ell= \ell(\mu)$. Note also that for any fixed $\mu$ there is only finitely many $\lambda$ such that $\mathrm{d}_{\lambda/\mu} > 0$.
\end{definition}

The core result that allows us to describe $G$-positive specializations is the following theorem showing that the function $F_{\mu}$ is Grothendieck-positive.

\begin{theorem}\label{fgplus}
We have the following expansion
$$
F_{\mu} = \sum_{\nu} 
\mathrm{d}_{\nu/\mu}\, \tilde G^{}_{\nu},
$$
where $\mathrm{d}_{\nu/\mu}$ is the number of delegant tableaux of shape $\nu/\mu$. In particular, $F_{\mu} \in \Gamma_+$. 
\end{theorem}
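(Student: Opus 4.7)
\emph{Strategy.} The plan is to establish a clean uniform Pieri rule for the elements $H_n$, use it to expand $F_\mu$ as a signed sum over chains of horizontal-strip additions, and then apply a Lindstr\"om--Gessel--Viennot-style sign-reversing argument that identifies the surviving terms with delegant tableaux.

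\emph{Step 1 (Uniform Pieri rule for $H_n$).} The first step is to apply the Pieri formula \eqref{gpieri0} to each of $\tilde G_{(n)}$ and $\tilde G_{(n+1)}$ and combine via Pascal's identity $\binom{r-1}{k-n} + \binom{r-1}{k-n-1} = \binom{r}{k-n}$ to obtain, for every $n \ge 0$,
\begin{equation*}
H_n \cdot \tilde G_\lambda \;=\; \sum_{\substack{\mu \supset \lambda \\ \mu/\lambda \text{ hor. strip}}} \binom{r(\mu/\lambda)}{|\mu/\lambda| - n}\, \tilde G_\mu.
\end{equation*}
The case $n = 0$ is covered because $H_0 = 1 + \tilde G_{(1)}$, with the trivial summand $\mu = \lambda$ picked up by $\binom{0}{0} = 1$ and the remaining summands being precisely the rook strip additions coming from $\tilde G_{(1)} \cdot \tilde G_\lambda$.

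\emph{Step 2 (Signed chain expansion).} Next, I expand the determinant $F_\mu = \sum_{\sigma \in S_\ell} \sgn(\sigma) \prod_{i=1}^{\ell} H_{\mu_i - i + \sigma(i)}$ and iterate the Pieri rule from Step~1 along each factor. The coefficient of $\tilde G_\nu$ then equals
\begin{equation*}
[\tilde G_\nu]\, F_\mu \;=\; \sum_{\vec\lambda}\; \det\!\left[\binom{r_t(\vec\lambda)}{k_t(\vec\lambda) + t - \mu_t - j}\right]_{1 \le t, j \le \ell},
\end{equation*}
where $\vec\lambda$ ranges over chains $\varnothing = \lambda^{(0)} \subset \lambda^{(1)} \subset \cdots \subset \lambda^{(\ell)} = \nu$ with each step $\lambda^{(t)}/\lambda^{(t-1)}$ a horizontal strip having $r_t$ nonempty rows and $k_t$ boxes.

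\emph{Step 3 (LGV cancellation and bijection).} The $(t,j)$ binomial in the matrix above counts ways to select $k_t + t - \mu_t - j$ rows out of the $r_t$ nonempty rows of the $t$-th horizontal strip. Summing over $\sigma$ and $\vec\lambda$, every term in the expansion corresponds to a triple (chain, per-step row-selection, permutation $\sigma$). A Lindstr\"om--Gessel--Viennot-style sign-reversing involution, which swaps two columns of $\sigma$ at the first position where the chosen row-indices violate the column-strictness condition, cancels non-fixed triples in signed pairs. The surviving fixed points, when decoded as fillings of $\nu/\mu$ whose box $(i,c)$ records the row-index chosen at the corresponding chain step, are weakly decreasing along rows, strictly decreasing down columns, and have entries in row $i$ constrained to the interval $[\nu_i,\, \ell+1+\mu_i-i]$ forced by the nonvanishing range of the binomial. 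These are precisely delegant tableaux of shape $\nu/\mu$, yielding $[\tilde G_\nu] F_\mu = \mathrm{d}_{\nu/\mu}$ and hence Grothendieck-positivity $F_\mu \in \Gamma_+$.

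\emph{Main obstacle.} The delicate point is the bookkeeping in Step~3: the involution must simultaneously swap the binomial row-selections and the permutation signs while respecting the chain structure, and one must verify that the fixed points are in bijection with delegant tableaux with the precise row ranges. The definition of delegant tableau is in fact engineered so that the range $[\nu_i,\, \ell+1+\mu_i-i]$ matches the condition $0 \le k_t + t - \mu_t - \sigma(t) \le r_t$ needed for a nonvanishing binomial; establishing this alignment carefully is the heart of the argument, while Steps~1 and~2 are routine manipulations.
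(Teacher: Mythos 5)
Your Steps 1 and 2 are correct: the uniform rule $H_n \cdot \tilde G_\lambda = \sum_{\mu/\lambda \ \text{hor.\ strip}} \binom{r(\mu/\lambda)}{|\mu/\lambda|-n}\,\tilde G_\mu$ does follow from \eqref{gpieri0} and Pascal's identity (including $n=0$, since $|\mu/\lambda|\ge r(\mu/\lambda)$ forces rook strips there), and interchanging the sum over $\sigma$ with the sum over chains is legitimate because $r_t,k_t$ depend only on the chain. The genuine gap is Step 3, which is not a proof but an assertion that an LGV-type involution exists. Nothing is actually constructed: swapping two columns of $\sigma$ changes the sizes $k_t+t-\mu_t-\sigma(t)$ of the selected row-subsets, so you must specify how selections are matched before and after the swap; you also do not say whether the involution preserves the chain $\vec\lambda$, and note that an individual chain determinant $\det\bigl[\binom{r_t}{k_t+t-\mu_t-j}\bigr]$ can be negative, so the cancellation must either mix chains or you must separately prove each chain determinant is a nonnegative count, and then that these counts sum to $\mathrm{d}_{\nu/\mu}$. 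Moreover, the proposed decoding of fixed points cannot be right as stated: a ``row-index chosen at a chain step'' is at most $\ell(\nu)\le\ell$, whereas a delegant tableau of shape $\nu/\mu$ has entries in row $i$ bounded below by $\nu_i$, which can far exceed $\ell$; so the claimed bijection requires a nontrivial re-encoding that is not supplied. Since you yourself identify this as the heart of the argument, the proposal does not yet prove the theorem.

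For comparison, the paper avoids chain-level cancellation entirely: the identity $H_n=H_0\cdot h_n$ (Lemma \ref{lhb}, proved from the hook Schur expansion of $\tilde G_{(n)}$) collapses the determinant at once to $F_\mu=(H_0)^{\ell}\,s_\mu$; the dual Cauchy identity specialized at $\mathbf{y}=(1^{\ell})$ gives $(H_0)^{\ell}s_\mu=\sum_{\lambda\supset\mu}s_\lambda\,s_{\lambda'/\mu'}(1^{\ell})$; the inverse expansion in \eqref{gschur} converts Schur to Grothendieck; and the resulting alternating coefficient is evaluated in closed form by combining the determinantal formulas of Lemmas \ref{fdet} and \ref{ddet} with Cauchy--Binet and the binomial identity coming from $(1+x)^{\ell-i+1}=(1+x)^{\ell}(1+x)^{-(i-1)}$. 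If you wish to keep your route, you must actually build the sign-reversing involution on (chain, selection, permutation) triples and exhibit the bijection from its fixed points to delegant tableaux with the stated row ranges; otherwise, the single identity $H_n=H_0h_n$ reduces the whole computation to symmetric-function algebra and lets determinant identities do the cancellation for you.
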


An application of this theorem is the following.
\begin{theorem}\label{toep}
Let $\varphi$ be a $G$-positive homomorphism of $\Gamma$. Then the Toeplitz matrix 
$$H(\varphi) := \left[H^{}_{i - j}(\varphi)\right]_{i,j \ge 0}$$ 
is totally nonnegative.
\end{theorem}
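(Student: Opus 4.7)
The plan is to promote the $G$-positive homomorphism $\varphi$ to a Schur-positive specialization of $\Lambda$ and then invoke the Edrei--Thoma characterization of totally nonnegative Toeplitz matrices (Theorem~\ref{ets0}). Because $\Lambda \cong \mathbb{R}[h_1, h_2, \ldots]$ is a polynomial ring, I define a ring homomorphism $\rho : \Lambda \to \mathbb{R}$ on generators by
$$
\rho(h_n) := \frac{H_n(\varphi)}{H_0(\varphi)} \qquad (n \ge 1),
$$
which is well-defined since $H_0(\varphi) = 1 + \tilde G_{(1)}(\varphi) \ge 1$ by $G$-positivity; the same formula also consistently extends to $n = 0$, where $\rho(h_0) = 1 = H_0(\varphi)/H_0(\varphi)$.

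The key input is Theorem~\ref{fgplus}. Since $F_\mu = \sum_\nu \mathrm{d}_{\nu/\mu}\,\tilde G_\nu \in \Gamma_+$ and $\varphi$ is $G$-positive, we have $F_\mu(\varphi) \ge 0$ for every partition $\mu$. Combining the Jacobi--Trudi identity $s_\mu = \det[h_{\mu_i - i + j}]$ with the fact that $\rho$ is a ring homomorphism yields
$$
\rho(s_\mu) = \det\bigl[\rho(h_{\mu_i - i + j})\bigr] = \frac{F_\mu(\varphi)}{H_0(\varphi)^{\ell(\mu)}} \;\ge\; 0,
$$
so $\rho$ is Schur-positive. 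By Edrei--Thoma (Theorem~\ref{ets0}), the associated Toeplitz matrix $[\rho(h_{i-j})]_{i,j \ge 0}$ is totally nonnegative.

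To transfer this back to $H(\varphi)$, I observe the entrywise identity $H(\varphi) = H_0(\varphi) \cdot [\rho(h_{i-j})]_{i,j \ge 0}$: on the diagonal $H_0(\varphi) \cdot 1 = H_0(\varphi)$; strictly below it, $H_0(\varphi)\cdot H_{i-j}(\varphi)/H_0(\varphi) = H_{i-j}(\varphi)$; strictly above, both sides vanish. Multiplying every entry of a totally nonnegative matrix by the positive scalar $H_0(\varphi)$ scales each $k \times k$ minor by $H_0(\varphi)^k > 0$ and therefore preserves total nonnegativity. Hence $H(\varphi)$ is totally nonnegative.

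The substantive work sits entirely in Theorem~\ref{fgplus}; once Grothendieck-positivity of the Jacobi--Trudi functions $F_\mu$ is granted, Theorem~\ref{toep} is a packaging step via the classical Edrei--Thoma theorem and presents no real obstacle of its own. The genuine difficulty in the overall project is the delegant-tableaux expansion underlying Theorem~\ref{fgplus} — everything here is just a translation of that statement into the language of totally nonnegative Toeplitz matrices.
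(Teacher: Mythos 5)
Your argument is correct, but it takes a genuinely different route from the paper's. The paper works with the matrix $H(\varphi)$ itself: by Theorem~\ref{fgplus} the minors $\det[H_{\lambda_i-i+j}(\varphi)]=F_\lambda(\varphi)\ge 0$, which are exactly the minors occupying consecutive columns, the principal minors are positive since the matrix is triangular with diagonal $H_0(\varphi)>0$, and a finite linear-algebra total-nonnegativity test (Gasca--Pe\~na \cite{gp}) then gives the conclusion directly, with no symmetric-function detour and no use of Edrei--Thoma. You instead normalize by $H_0(\varphi)$, identify the straight-shape minors with $\rho(s_\mu)=F_\mu(\varphi)/H_0(\varphi)^{\ell(\mu)}$ via Jacobi--Trudi (your entry check, including the indices $n\le 0$, is fine), conclude that $\rho$ is Schur-positive, invoke the equivalence ``Schur-positive iff the Toeplitz matrix is totally nonnegative,'' and rescale each $k\times k$ minor by $H_0(\varphi)^k>0$. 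This is sound and not circular, since that equivalence is an external classical statement; note, however, that it is Theorem~\ref{ets} (the specialization form) you need, not Theorem~\ref{ets0}, and that the direction you use is the elementary one: any minor of $[\rho(h_{i-j})]$ equals $\rho(s_{\lambda/\mu})\ge 0$ by the skew Jacobi--Trudi identity and Littlewood--Richardson positivity, so you could bypass Edrei--Thoma altogether. In effect your proof establishes the content of Theorem~\ref{t1} first and deduces Theorem~\ref{toep} from it, reversing the paper's order (the paper proves Theorem~\ref{toep} first and gets Theorem~\ref{t1} as a corollary). What the paper's route buys is a purely matrix-theoretic argument needing only consecutive-column minors; what yours buys is a uniform treatment of all minors through Schur positivity. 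As you say, in either approach the real content is Theorem~\ref{fgplus}.
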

\begin{proof}
First note that the diagonal elements $H_0(\varphi) = 1 + \tilde G_{1}(\varphi) > 0$ are positive and since the matrix $H(\varphi)$ is upper triangular, the principal minors $[H^{}_{i - j}(\varphi)]_{0 \le i,j \le n}$ are positive for all $n \ge 0$. For all partitions $\lambda,$ the minors $\det\left[H^{}_{\lambda_i - i + j}(\varphi)\right]  = F_{\lambda}(\varphi) \ge 0$ are nonnegative by Theorem~\ref{fgplus}. In particular, all minors occupying several consecutive columns are nonnegative. Nonnegativity of all these minors and positivity of the principal minors is sufficient for the matrix $H$ to be totally nonnegative (see the nonnegativity criterion \cite[Thm.~3.2]{gp}; see also \cite{fz} for total positivity tests). 
\end{proof}

To prove Theorem \ref{fgplus} we need some preparatory lemmas.

\begin{lemma}\label{lhb}
We have 
$H_n^{} = H_{0}^{} \cdot h_n$ for all $n \ge 0$. Equivalently, we have the generating series 
\begin{equation}
 \sum_{n = 0}^{\infty} H_n\, z^n = 1 + (z + 1)\sum_{n = 1}^{\infty} \tilde G_{(n)}\, z^{n-1} =  \prod_{n = 1}^{\infty} \frac{1 + x_n}{1 - z x_n}
\end{equation}
\end{lemma}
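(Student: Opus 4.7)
I would begin by noting that the first displayed equality is immediate from the definition of $H_n$: collecting terms in $\sum_{n\ge 0} H_n z^n$ gives $(1+\tilde G_{(1)})+\sum_{n\ge 1}(\tilde G_{(n)}+\tilde G_{(n+1)})z^n$, which regroups to $1+(1+z)\sum_{n\ge 1}\tilde G_{(n)}z^{n-1}$. For the nontrivial second equality, the key observation is that $H_0 = 1+\tilde G_{(1)} = \prod_k(1+x_k)$, so
\[
    H_0\cdot H(z) \;=\; \prod_k\frac{1+x_k}{1-zx_k}
\]
(using the standard generating function $H(z)=\prod_k(1-zx_k)^{-1}$). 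By the Cauchy product, the product formula thus reduces to the identity $H_n = H_0\cdot h_n$ for all $n\ge 0$; for $n\ge 1$ this is
\[
    \tilde G_{(n)} + \tilde G_{(n+1)} \;=\; \Big(\prod_k(1+x_k)\Big)\cdot h_n.
\]

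The plan is to establish this last identity by comparing coefficients of an arbitrary monomial $x^e$, where $e=(e_1\le\dots\le e_N)$ is a weakly increasing sequence. Write $t$ for the number of distinct values in $e$ and set $d := N-t$, the number of indices $i<N$ with $e_i=e_{i+1}$. On the left, I would parameterize an SVT of shape $(m)$ with monomial $x^e$ as a partition of the sorted sequence $e$ into $m$ consecutive non-empty blocks $S_1,\ldots,S_m$: the semistandard constraint $\max S_i\le\min S_{i+1}$ is automatic from the weak increase of $e$, but each $S_i$ must be a genuine \emph{set}, so every adjacent coincidence $e_i=e_{i+1}$ forces a block boundary between positions $i$ and $i+1$. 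Thus placing the $m-1$ dividers amounts to choosing the $d$ forced dividers together with $m-1-d$ additional ones from the $N-1-d$ remaining ``strict'' positions, giving $[x^e]\tilde G_{(m)}=\binom{N-1-d}{m-1-d}$, and Pascal's identity yields
\[
    [x^e]\bigl(\tilde G_{(n)}+\tilde G_{(n+1)}\bigr) \;=\; \binom{N-d}{n-d} \;=\; \binom{t}{n-d}.
\]

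For the right-hand side, a term of $\prod_k(1+x_k)\cdot h_n$ contributing $x^e$ corresponds to splitting the multiset of $e$ into a subset $S$ of the $t$ distinct values of $e$ (from the product) and a weakly increasing sequence of length $n$ (from $h_n$); the constraint $|S|=N-n$ gives coefficient $\binom{t}{N-n}=\binom{t}{n-d}$, matching the left side. The only point requiring real care is the SVT parameterization by forced dividers; once this is in place, the two sides collapse to the same binomial coefficient by symmetry, so I expect no serious obstacle.
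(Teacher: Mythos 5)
Your proof is correct, but it takes a genuinely different route from the paper. The paper deduces $H_n = H_0\cdot h_n$ algebraically: it invokes Lenart's hook-shape Schur expansion $\tilde G_{(n)} = \sum_{i\ge 0} s_{(n-1\,|\,i)}$, writes $H_0 = 1+\tilde G_{(1)} = \sum_{i\ge 0} e_i$, and applies the Pieri rule $e_i\, h_n = s_{(n-1\,|\,i)} + s_{(n\,|\,i)}$, so the identity drops out by telescoping over hooks. You instead work directly from the set-valued tableau definition of $\tilde G_{(m)}$ and compare coefficients of an arbitrary monomial $x^e$: your parameterization of single-row SVT by divider placements (with a forced divider at each adjacent coincidence of the sorted exponent sequence) is the only delicate point, and it is sound --- consecutive blocks of the sorted sequence with all forced gaps cut are exactly the valid sequences of sets satisfying $\max S_i \le \min S_{i+1}$, and the block sizes recover the placement uniquely --- after which Pascal's identity and the elementary count $\binom{t}{N-n}$ on the right finish the job. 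What your argument buys is self-containedness: it needs neither Lenart's expansion nor the Schur Pieri rule, only the combinatorial definition of $\tilde G_{(n)}$ and the product formula $1+\tilde G_{(1)}=\prod_k(1+x_k)$ already recorded in the paper. What the paper's argument buys is brevity and structural context: it stays entirely inside known Schur-expansion machinery that is reused elsewhere (e.g.\ in the proof of Theorem~\ref{fgplus}), and both proofs yield the generating-series form equally directly once $H_n = H_0 h_n$ is known.
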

\begin{proof}
It is known that (see \cite{lenart})
$$
\tilde G_{(n)} = \sum_{i = 0}^{\infty} s_{(n-1 | i)},
$$
where $s_{(a | b)} = s_{(a + 1, 1^b)}$ is the Schur function of hook shape. On the other hand, using the Pieri rule for Schur functions we get 
$$
H_{0}^{} \cdot  h_n = (1 + \tilde G_{(1)}) \cdot h_n = \sum_{i = 0}^{\infty} e_i\, h_n = \sum_{i = 0}^{\infty} s_{(n-1 | i)} + s_{(n | i)} = \tilde G_{(n)} + \tilde G_{(n+1)} = H_n.
$$
The generating function identity follows as well.
\end{proof}

\begin{lemma}\label{ddet} 
The following determinantal formula holds
$$
\mathrm{d}_{\nu/\mu} = \det\left[ \binom{\ell - i + 1}{\nu_i - i - \mu_j + j}\right]_{1 \le i,j \le \ell}, 
$$
where $\ell = \ell(\mu) = \ell(\nu)$.
\end{lemma}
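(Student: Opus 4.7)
My plan is to prove this identity via the Lindström--Gessel--Viennot (LGV) lemma, realizing delegant tableaux as non-intersecting tuples of lattice paths.

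First, I would set up the paths. For each $i = 1, \dots, \ell$ put $S_i := (\nu_i - i,\, \nu_i)$ and $T_i := (\mu_i - i,\, \ell + 1 + \mu_i - i)$, and consider paths in $\mathbb{Z}^2$ using unit steps west $(-1, 0)$ and north $(0, +1)$. A path from $S_i$ to $T_j$ consists of $\nu_i - i - \mu_j + j$ west steps and $(\ell + 1 + \mu_j - j) - \nu_i$ north steps, for a total of $\ell - i + 1$ steps, so the number of such paths is exactly $\binom{\ell - i + 1}{\nu_i - i - \mu_j + j}$---the $(i,j)$-entry of the matrix.

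Next I would apply LGV. Since $\nu_i - i$ and $\mu_j - j$ are both strictly decreasing in their indices (using $\nu_i \ge \nu_{i+1}$ and $\mu_j \ge \mu_{j+1}$), for any permutation $\sigma \neq \mathrm{id}$ one finds $i < i'$ with $\sigma(i) > \sigma(i')$; then $S_i$ lies strictly east of $S_{i'}$ while $T_{\sigma(i)}$ lies strictly west of $T_{\sigma(i')}$, and since the paths move only west or north, any two such paths must share a lattice point. The standard sign-reversing involution (swap tails at the first intersection) cancels all intersecting tuples, so the determinant equals the number of non-intersecting $\ell$-tuples with identity permutation.

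The bijection between such tuples and delegant tableaux of shape $\nu/\mu$ goes as follows. Each $P_i$ has $\nu_i - \mu_i$ west steps whose $y$-coordinates, read in reverse order, yield a weakly decreasing sequence in $[\nu_i,\, \ell + 1 + \mu_i - i]$---which I take as row $i$ of the candidate tableau. With the boundary conventions $a_{i, \mu_i} := \ell + 1 + \mu_i - i$ and $a_{i, \nu_i + 1} := \nu_i$, a short computation shows that $P_i$ meets the vertical line $x = c - i - 1$ (for $c \in \{\mu_i + 1, \dots, \nu_i + 1\}$) along exactly the $y$-interval $[a_{i, c},\, a_{i, c-1}]$; at the same $x$-coordinate $P_{i+1}$ meets the line along $[a_{i+1, c+1},\, a_{i+1, c}]$. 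Using these interval descriptions one verifies that non-intersection of $P_i$ and $P_{i+1}$ at every common $x$ is equivalent to column strictness $a_{i, c} > a_{i+1, c}$ at every column $c \in \{\mu_i + 1, \dots, \nu_{i+1}\}$ shared by the two rows. This pairs non-intersecting tuples bijectively with delegant tableaux, completing the proof.

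The main obstacle is the column-condition equivalence in the bijection: the forward direction is easy (strict decrease forces disjoint intervals), but the converse requires taking the largest failure column $c^*$ and combining the row-weakly-decreasing property with column strictness for columns $>c^*$ to show the two intervals at $x = c^* - i - 1$ overlap. Once the boundary conventions are fixed this is a clean case analysis using the partition inequalities on $\mu$ and $\nu$; everything else is routine LGV machinery.
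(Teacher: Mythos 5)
Your proposal is correct and follows exactly the route the paper takes: the paper's proof is just the citation ``a standard exercise on the Lindstr\"om--Gessel--Viennot Lemma'' (to Gessel--Viennot, Thm.~15), and your path endpoints $S_i=(\nu_i-i,\nu_i)$, $T_j=(\mu_j-j,\ell+1+\mu_j-j)$ reproduce the binomial entries, the crossing/cancellation argument, and the encoding of delegant tableaux by west-step heights that this standard exercise intends. The only spots you gloss over (the endpoint $y$-coordinates needed to force crossings for non-identity permutations, and reducing non-intersection to consecutive paths) are routine and consistent with the level of detail the paper itself omits.
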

\begin{proof}
A standard exercise on the Lindstr\"om-Gessel-Viennot Lemma, see e.g. \cite[Thm.~15]{gv}. 
\end{proof}

\begin{lemma}\label{fdet}
The following determinantal formula holds
$$
\mathrm{f}_{\nu/\lambda} = \det\left[ \binom{\nu_i - \lambda_j + j - 2}{\nu_i - i - \lambda_j + j}\right]_{1\le i,j \le \ell(\nu)},
$$
where $\mathrm{f}_{\nu/\lambda}$ is the number of elegant tableaux of shape $\nu/\lambda$ (see Def.~\ref{eleg}).
\end{lemma}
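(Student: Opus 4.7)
The plan is to apply the Lindstr\"om--Gessel--Viennot (LGV) lemma, recognizing $\mathrm{f}_{\nu/\lambda}$ as the number of flagged semistandard Young tableaux of shape $\nu/\lambda$ with row-$i$ flag $[1, i-1]$. Equivalently, one can invoke the flagged Jacobi--Trudi formula $s^{[1,i-1]}_{\nu/\lambda}(\mathbf{x}) = \det\bigl[h_{\nu_i - \lambda_j - i + j}(x_1,\ldots,x_{i-1})\bigr]$ (Wachs; Gessel--Viennot) and specialize $x_k \mapsto 1$, using $h_m(1,\ldots,1) = \binom{(i-1)+m-1}{m}$ to read off the claimed binomial entries.

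For a direct LGV proof, I would associate to each row $i \in \{1,\ldots,\ell(\nu)\}$ a lattice path $P_i$ using unit east and north steps, from source $A_i := (\lambda_i - i + 1,\,0)$ to sink $B_i := (\nu_i - i + 1,\,i-1)$. An east step at height $h$ in a path targeting $B_i$ encodes the row-$i$ entry $h+1$, so the flag condition ``entry $\le i-1$'' forbids east steps at height $i-1$, forcing any path ending at $B_i$ to finish with a north step. A direct count then gives
\[
\#\{A_j \to B_i\} = \binom{(\nu_i - i - \lambda_j + j)+(i-2)}{\nu_i - i - \lambda_j + j} = \binom{\nu_i - \lambda_j + j - 2}{\nu_i - i - \lambda_j + j},
\]
matching the proposed matrix entry. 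The standard $-i$ shift in the $x$-coordinate translates column-strictness of the tableau into non-intersection of the paths, so identity-matched non-intersecting path systems biject with elegant tableaux of shape $\nu/\lambda$.

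The main technical point will be verifying that non-identity permutations in the LGV signed sum contribute zero, despite the unusual feature that the sinks $B_i$ lie at varying heights $y = i-1$. If some $j > 1$ had $\sigma(j) = 1$, the flag would force the path $A_j \to B_1$ (at height $0$) to carry no east steps at all, requiring $\lambda_j - j + 1 = \nu_1$, which contradicts $\lambda_j \le \lambda_1 \le \nu_1$. Iterating this on the remaining indices (using the partition inequalities $\lambda_j \le \nu_i$ whenever $j \ge i$) forces $\sigma = \mathrm{id}$, so the LGV lemma yields $\mathrm{f}_{\nu/\lambda} = \det[\#\{A_j \to B_i\}]$ as claimed.
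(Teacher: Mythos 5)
Your first route---recognizing $\mathrm{f}_{\nu/\lambda}$ as a flagged skew Schur function with flag $b_i = i-1$, citing the flagged Jacobi--Trudi determinant of Wachs/Gessel--Viennot, and specializing $x_k \mapsto 1$ via $h_m(1^{i-1}) = \binom{i+m-2}{m}$---is sound and is essentially the paper's own proof, which likewise just invokes Lenart's lattice-path interpretation together with the Lindstr\"om--Gessel--Viennot lemma. The problem is the ``direct LGV proof'': the argument you give for eliminating non-identity permutations does not work. Your observation only shows that no admissible path joins $A_j$ to $B_1$ for $j>1$; it does not iterate. For $i \ge 2$ and $j > i$ the restricted path counts $\binom{\nu_i - \lambda_j + j - 2}{\nu_i - i - \lambda_j + j}$ are typically nonzero---the inequality $\lambda_j \le \nu_i$ for $j \ge i$ makes the lower index nonnegative, so it works \emph{against} the vanishing you want. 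Concretely, for $\lambda = (2,1,1)$, $\nu = (2,2,2)$ both $\#\{A_3 \to B_2\}$ and $\#\{A_2 \to B_3\}$ equal $1$, so the transposition $(2\,3)$ contributes a nonzero term to the determinant expansion; it is cancelled by the LGV involution, not because paths fail to exist. What you actually need is (a) nonpermutability: any \emph{non-intersecting} family forces $\sigma = \mathrm{id}$, proved by the usual crossing argument using that the sources move strictly left and the sinks move strictly left and strictly up as the index grows; and (b) a sign-reversing tail-swap involution valid for your restricted paths.

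Point (b) is a genuine issue in your setup: if two paths meet at height $i-1$, the prefix of the path headed to the higher sink $B_{i'}$ may legally contain east steps at height $i-1$, and after swapping tails the new path into $B_i$ violates your restriction, so the naive involution leaves your path set. The clean repair is to remove the restriction by lowering the sinks: a restricted path to $B_i = (\nu_i - i + 1,\, i-1)$ is the same as an unrestricted lattice path to $B_i' = (\nu_i - i + 1,\, i-2)$ followed by one forced north step, and the unrestricted count from $A_j$ to $B_i'$ is exactly $\binom{\nu_i - \lambda_j + j - 2}{\nu_i - i - \lambda_j + j}$ (the row $i = 1$ entries degenerate to a Kronecker delta and can be treated separately). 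With sinks at heights $i-2$ the standard LGV lemma and crossing argument apply verbatim and the determinant identity follows; alternatively, simply rely on your first paragraph, which already matches the paper's argument.
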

\begin{proof}
Follows from the lattice path interpretation given in \cite{lenart} (cf. \cite{dy}) and the Lindstr\"om-Gessel-Viennot Lemma \cite{gv}.
\end{proof}

\begin{proof}[Proof of Theorem \ref{fgplus}]
By definition of $F_{\mu}$ it is clear that $F_{\mu} \in \Gamma$. Moreover, the expansion of $F_{\mu}$ in the basis $\{\tilde G^{}_{\nu} \}$ contains only elements with $\ell(\nu) \le \ell(\mu)$. To see this notice that since the Pieri rule \eqref{gpieri0} adds horizontal strips, we have that products $\tilde G_{(n_1)} \cdots \tilde G_{(n_\ell)}$ contains only elements $\tilde G_{\nu}$ with $\ell(\nu) \le \ell$.
Let $\ell = \ell(\mu)$. Lemma \ref{lhb} gives 
$$
F_{\mu} = \det\left[H^{}_{0} \cdot h_{\mu_i - i + j}\right]_{1 \le i, j \le \ell}  = (H^{}_{0})^{\ell}\, \det\left[h_{\mu_i - i + j}\right]_{1 \le i, j \le \ell} = (H_{0}^{})^{\ell}\, s_{\mu}. 
$$
Using the following identity for Schur functions (which can be derived easily from Cauchy identities and LR expansions, or follows from skew Cauchy identities, see e.g. \cite[Ch.~1]{macdonald}) 
\begin{equation*}
s_{\mu}(\mathbf{x}) \prod_{i,j = 1}^{\infty} (1 + x_i y_j) = \sum_{\lambda \supset \mu} s_{\lambda}(\mathbf{x}) s_{\lambda'/\mu'}(\mathbf{y})
\end{equation*}
for the specialization $\mathbf{y} = (1^{\ell})$ and the Grothendieck expansion \eqref{gschur} 
we obtain
\begin{align*}
(H_{0}^{})^{\ell}\, s_{\mu}(\mathbf{x}) 
	&= s_{\mu}(\mathbf{x})  \prod_{i = 1}^{\infty} (1 + x_i)^\ell \\
	&= \sum_{\lambda \supset \mu} s_{\lambda}(\mathbf{x})\, s_{\lambda'/\mu'}(1^\ell) 
	\\
	&= \sum_{\nu} \tilde G^{}_{\nu}(\mathbf{x}) 
	\sum_{\mu \subset \lambda \subset \nu} (-1)^{|\nu/\lambda|} \mathrm{f}_{\nu/\lambda}\, s_{\lambda'/\mu'}(1^\ell)
\end{align*}
where the sum is finite and contains only elements $\tilde G_{\nu}$ with $\ell(\nu) \le \ell$. Let 
$$
d(\nu,\mu) := \sum_{\mu \subset \lambda \subset \nu} (-1)^{|\nu/\lambda|} \mathrm{f}_{\nu/\lambda}\, s_{\lambda'/\mu'}(1^\ell)
$$
and we need to show that $d(\nu,\mu) = \mathrm{d}_{\nu/\mu}$ is the number of delegant tableaux. 

From the Jacobi-Trudi identity we have (note that $\ell(\lambda) \le \ell$)
$$
s_{\lambda'/\mu'}(1^\ell) =\det\left[ e_{\lambda_i - i - \mu_j + j}(1^{\ell}) \right]_{1\le i,j \le \ell} = \det\left[ \binom{\ell}{\lambda_i - i - \mu_j + j}\right]_{1\le i,j \le \ell}
$$
Combining this determinantal formula, the formula in Lemma~\ref{fdet}, the Cauchy-Binet identity (in such form indexed by partitions can be found in \cite{dy}), and the formula in Lemma~\ref{ddet} we obtain
\begin{align*}
d(\nu, \mu) 
	&=  \sum_{\mu \subset \lambda \subset \nu} (-1)^{|\nu/\lambda|} \det\left[ \binom{\nu_i - \lambda_j + j - 2}{\nu_i - i - \lambda_j + j}\right]  \det\left[ \binom{\ell}{\lambda_i - i - \mu_j + j}\right] \\
	&=  \sum_{\mu \subset \lambda \subset \nu} \det\left[ (-1)^{\nu_i - i - \lambda_j + j}\binom{\nu_i - \lambda_j + j - 2}{\nu_i - i - \lambda_j + j}\right]  \det\left[ \binom{\ell}{\lambda_i - i - \mu_j + j}\right] \\
	&=  \det\left[\sum_{k} (-1)^{\nu_i - i - k}\binom{\nu_i - k - 2}{\nu_i - i - k} \binom{\ell}{k - \mu_j + j}\right] \\
	&=  \det\left[ \binom{\ell - i + 1}{\nu_i - i - \mu_j + j}\right]_{1 \le i,j \le \ell}\\ 
	&= \mathrm{d}_{\nu/\mu}
\end{align*}
Here we also used the identity 
$$
\sum_{k} (-1)^{\nu_i - i - k}\binom{\nu_i - k - 2}{\nu_i - i - k} \binom{\ell}{k - \mu_j + j} = \binom{\ell - i + 1}{\nu_i - i - \mu_j + j}
$$
which can be derived by comparing the coefficients at $x^{\nu_i - i - \mu_j + j}$ from both sides of the generating function identity 
$(1 + x)^{\ell-i+1}  = (1 + x)^\ell (1 + x)^{-(i-1)}.$
\end{proof}

\begin{proof}[Proof of Theorem \ref{t1}]
By Theorem \ref{toep} and the Edrei-Thoma theorem we have that the map
$$
h_n \longmapsto \frac{H_n(\varphi)}{H_{0}(\varphi)} = \frac{\tilde G_{(n)}(\varphi) + \tilde G_{(n+1)}(\varphi)}{1 + \tilde G_{(1)}(\varphi)}, \quad n = 1, 2, \ldots
$$
defines a Schur-positive specialization of $\Lambda$. 
\end{proof}

\begin{lemma}\label{zero}
Let $\varphi$ be a $G$-positive homomorphism of $\Gamma$. Suppose $\tilde G_{\mu}(\varphi) = 0$ for some $\mu$. Then $\tilde G_{\lambda}(\varphi) = 0$ for all $\lambda \supset \mu$.
\end{lemma}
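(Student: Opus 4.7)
The plan is to use the simplest Pieri rule in the form
\[
\tilde G_{(1)} \cdot \tilde G_{\mu} = \sum_{\nu \, : \, \nu/\mu \text{ nonempty rook strip}} \tilde G_{\nu},
\]
as a propagation device: since $\varphi$ is multiplicative and $G$-positive, the vanishing of $\tilde G_{\mu}(\varphi)$ forces the whole right-hand sum to vanish, and each summand being nonnegative then forces each summand to vanish. This gives $\tilde G_{\nu}(\varphi)=0$ for every $\nu$ obtained from $\mu$ by adding a nonempty rook strip; in particular for every $\nu$ obtained by adding a single box.

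Once the one-box propagation is in hand, I proceed by induction on $|\lambda/\mu|$. The base case $\lambda=\mu$ is given. For the inductive step, choose any saturated chain
\[
\mu = \mu^{(0)} \subsetneq \mu^{(1)} \subsetneq \cdots \subsetneq \mu^{(n)} = \lambda
\]
in Young's lattice, which exists because $\lambda\supset\mu$ and saturated chains between comparable partitions are produced by adding one box at a time (pick any removable corner of the current partition that lies in $\lambda\setminus\mu^{(i)}$). Applying the one-step propagation from $\mu^{(i)}$ to $\mu^{(i+1)}$ repeatedly gives $\tilde G_{\lambda}(\varphi)=0$.

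There is no substantive obstacle: the only thing to verify carefully is that the Pieri sum above contains only partitions strictly larger than $\mu$ (so that the vanishing is really being pushed upward), which is immediate because $\tilde G_{(1)}$ has no constant term and rook strips here are required to be nonempty. Everything else is bookkeeping, and no appeal to the heavier machinery of Section~\ref{gps} (the determinantal identity, Theorem~\ref{fgplus}, or the Edrei--Thoma parametrization) is required for this lemma.
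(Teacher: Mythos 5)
Your proof is correct and follows essentially the same route as the paper: apply $\varphi$ to the simple Pieri rule $\tilde G_{(1)}\cdot\tilde G_{\mu}=\sum_{\nu/\mu \text{ rook strip}}\tilde G_{\nu}$, use multiplicativity and positivity to kill each summand (in particular the single-box additions), and iterate up a chain of one-box additions to reach any $\lambda\supset\mu$. No issues.
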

\begin{proof}
From the Pieri rule we have
$$0 = \varphi(\tilde G_{(1)} \cdot \tilde G_{\mu}) = \sum_{\lambda/\mu \text{ rook strip}} \tilde G_{\lambda}(\varphi) \ge 0$$ 
Hence $\tilde G_{\lambda}(\varphi) = 0$ for all $\lambda$ such that $\lambda/\mu$ is a rook strip. In particular, $\tilde G_{\lambda}(\varphi) = 0$ for all $\lambda = \mu + \square$. Applying the same argument replacing $\mu$ with these partitions $\lambda$ further, we obtain that $\tilde G_{\lambda}(\varphi) = 0$ for all $\lambda \supset \mu$. 
\end{proof}

\begin{proof}[Proof of Theorem \ref{t2}]
Since $\tilde G_{(1)}(\varphi) = 1$, the map 
$$
\rho: h_n \longmapsto \frac{1}{2}\left(\tilde G^{}_{(n)}(\varphi) + \tilde G^{}_{(n+1)}(\varphi)\right)
$$
defines a Schur-positive specialization by Theorem \ref{t1}. Therefore, 
$$
\rho(H(z)) = 1 + \frac{1}{2}\sum_{n}\left(\tilde G^{}_{(n)}(\varphi) + \tilde G^{}_{(n+1)}(\varphi)\right) z^n  = 1 + \sum_{n} \rho(h_n) z^n = e^{\gamma z} \prod_{n} \frac{1 + \beta_n z}{1 - \alpha_n z}
$$
for some nonnegative reals $\{\alpha_{n} \}$, $\{ \beta_n\},$ $\gamma$ such that $\sum_n (\alpha_n + \beta_n) < \infty$. 

Note that from the Pieri rule 
\begin{equation}\label{gpieri}
\tilde G_{(n)} \cdot \tilde G_{(1)} = \tilde G_{(n+1)} + \tilde G_{(n,1)} + \tilde G_{(n+1,1)}
\end{equation}
for all $n$ we obtain
$$\tilde G_{(n+1)}(\varphi) \le \tilde G_{(1)}(\varphi) \cdot \tilde G_{(n)}(\varphi) = \tilde G_{(n)}(\varphi) \le \tilde G_{(1)}(\varphi) = 1.$$

Consider two cases:

{\it Case 1.} If $\tilde G_{(2)}(\varphi) = 1$. Then the identity 
$(\tilde G_{(1)})^2 = \tilde G_{(2)} + \tilde G_{(1^2)} + \tilde G_{(2,1)}$ implies that $\tilde G_{(1^2)}(\varphi) = 0$ and hence by Lemma~\ref{zero} we have $\tilde G_{\lambda}(\varphi) = 0$ for all $\lambda \supset (1^2)$ (i.e. when $\lambda$ has at least two rows). From the identity \eqref{gpieri} 
we obtain by induction that $\tilde G_{(n)}(\varphi) = 1$ for all $n \ge 1$. Therefore, 
$\rho(H(z)) = \frac{1}{1 - z}$ which corresponds to the parameters $\gamma = 0$, $\alpha_1 = 1$, $\alpha_{n+1} = 0$, $\beta_n = 0$ for all $n$. Conversely, we have the $G$-positive homomorphism $\tilde G_{\lambda}(\varphi) = \tilde G_{\lambda}(1)$ that meets these conditions.

{\it Case 2.} If $\tilde G_{(2)}(\varphi) = \delta < 1$. 
From nonnegativity of $2 \times 2$ minors of the matrix $[\rho(h_{i - j})]_{i,j \ge 0}$ (which is log-concavity) we have  
$$
\frac{\rho(h_{n})}{\rho(h_{n+1})} 
\ge \frac{\rho(h_{n-1})}{\rho(h_{n})} \ge \ldots \ge \frac{1}{\rho(h_1)} = \frac{2}{1 + \tilde G_{(2)}(\varphi)} = \frac{2}{1 + \delta} >  1.
$$
Therefore, the radius of convergence of $\rho(H(z))$ is greater than $1$. (In particular, $\alpha_n < 1$.) 
Let now 
$$
\rho(E(z)) := 1 + \sum_{n = 1}^{\infty} \rho(e_n)\, z^n = \rho(H(-z)^{-1})= e^{\gamma z} \prod_{n} \frac{1 + \alpha_n z}{1 - \beta_n z}.
$$
Similarly, we obtain that
$$
\frac{\rho(e_{n})}{\rho(e_{n+1})} 
\ge \frac{1}{\rho(e_1)} = \frac{1}{\rho(h_1)} = \frac{2}{1 + \tilde G_{(2)}(\varphi)} = \frac{2}{1 + \delta} >  1
$$
and hence the radius of convergence of $\rho(E(z))$ is greater than $1$ as well. In particular, $\beta_n < 1$ for all $n$.
Now we have 
$$
\rho(H(-1)) = \frac{1}{2} =  e^{-\gamma} \prod_{n} \frac{1 - \beta_n}{1 + \alpha_n} < \infty
$$
and hence 
$$
\gamma = \log 2 - \sum_{n} \log(1 + \alpha_n) + \sum_{n} \log(1 - \beta_n).
$$

Let us show the converse part. Suppose $\rho$ is a Schur-positive specialization of $\Lambda$ with the parameters as above. Let us show that it extends to a $G$-positive specialization $\hat\rho$ such that 
$$\tilde G_{\lambda}(\hat\rho) := \sum_{\mu \supset \lambda} \mathrm{r}_{\mu/\lambda}\, \rho(s_{\mu}) \ge 0,$$
relying on Schur expansion of $\tilde G_{\lambda}$, see \eqref{gschur}.
We are going to show that the infinite sums
$$
0 \le \sum_{\mu \supset \lambda} \mathrm{r}_{\mu/\lambda}\, \rho(s_{\mu}) < \infty
$$
converge. 
Let us check the normalization: 
$$\rho(H(-1)) = 1 - \rho(h_1) + \rho(h_2) - \ldots = e^{-\gamma} \prod_{n} \frac{1 - \beta_n}{1 + \alpha_n} = \frac{1}{2}$$
Hence,
$$
\tilde G_{(1)}(\hat\rho) = -1 + \frac{1}{\rho(H(-1))} = 1. 
$$
Let $\lambda \vdash n$ and consider the expansion
\begin{equation}\label{gek}
(\tilde G_{(1)})^n - \tilde G_{\lambda} = \sum_{\nu} a_{\nu}\, \tilde G_{\nu} \in \Gamma_+
\end{equation}
Let also take the Schur expansion (in $\hat\Lambda$)
$$
(\tilde G_{(1)})^n = \sum_{\nu} b_{\nu}\, s_{\nu}, \quad b_{\nu} \in \mathbb{Z}_{\ge 0}
$$
Applying the specialization $\rho$ to these series we have
$$
 \sum_{\nu} b_{\nu}\, \rho(s_{\nu}) = 1
$$
converges. 
Finally observe that from \eqref{gek}
$$
0 \le \sum_{\mu \supset \lambda} \mathrm{r}_{\mu/\lambda}\, \rho(s_{\mu}) \le \sum_{\nu} b_{\nu}\, \rho(s_{\nu}) = 1.
$$
\end{proof}

\subsection{Structure of $G$-positive specializations}
Let us show that Schur-positive generators $\phi, \varepsilon, \pi$ (see subsec.~\ref{sgen}) extend to $G$-positive specializations $\hat\phi, \hat\varepsilon, \hat\pi$.

\begin{lemma}\label{sext}
Let $\alpha, \beta, \gamma \in \mathbb{R}_{\ge 0}$ and $\beta < 1$. Then the Schur-positive generators $\phi_{\alpha}, \varepsilon_{\beta}, \pi_{\gamma}$ of $\Lambda$ extend to $G$-positive specializations $\hat\phi_{\alpha}, \hat\varepsilon_{\beta}, \hat\pi_{\gamma}$ of $\Gamma$.
\end{lemma}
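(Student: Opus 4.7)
The task reduces to showing the Schur-expansion sum
$\tilde G_\lambda(\hat\rho) = \sum_{\mu \supset \lambda} \mathrm{r}_{\mu/\lambda}\,\rho(s_\mu)$
converges for every partition $\lambda$ and each of $\rho \in \{\phi_\alpha, \varepsilon_\beta, \pi_\gamma\}$; nonnegativity is then automatic from $\mathrm{r}_{\mu/\lambda} \ge 0$ and the Schur-positivity of $\rho$.

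The cases $\phi_\alpha$ and $\varepsilon_\beta$ are handled by collapse of the sum. For $\phi_\alpha$, since $\phi_\alpha(s_\mu) = 0$ unless $\mu = (k)$ is a single row and $\mathrm{r}_{(k)/\lambda} \neq 0$ forces $\lambda_1 = k$ (hence $\lambda = (k)$), the sum reduces to a single term $\alpha^{|\lambda|}$ (or $0$), matching the single-variable formula $\tilde G_\lambda(\alpha)$ from Proposition~\ref{prop1}. For $\varepsilon_\beta$ with $\beta < 1$, the analogous collapse forces $\mu = (1^m)$ and $\lambda = (1^k)$; the substitution $b_j = a_{k+j} - j$ sets up a bijection between the strict sequences $a_{k+1} < \cdots < a_m$ with $a_i \le i-1$ and weakly increasing length-$(m-k)$ sequences in $\{0, \ldots, k-1\}$, yielding $\mathrm{r}_{(1^m)/(1^k)} = \binom{m-1}{k-1}$. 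Summing, $\hat\varepsilon_\beta(\tilde G_{(1^k)}) = (\beta/(1-\beta))^k$, which is finite precisely when $\beta < 1$.

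The technical case is $\pi_\gamma$, which I would approach by finite-variable approximation. For each $N$ the Schur-positive specialization $\phi_{\gamma/N}^{\cup N}$ extends trivially to $\Gamma$ via the polynomial evaluation $\tilde G_\lambda(\gamma/N, \ldots, \gamma/N)$---a finite, nonnegative SVT sum. Lemma~\ref{lhb} gives $\sum_n H_n(\hat\phi_{\gamma/N}^{\cup N})\,z^n = ((1+\gamma/N)/(1-\gamma z/N))^N \to e^{\gamma(1+z)}$ as $N \to \infty$, so $H_n(\hat\pi_\gamma) = e^\gamma \gamma^n/n!$ is finite for every $\gamma \ge 0$. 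Combined with the finite expansion $F_\mu = \sum_\nu \mathrm{d}_{\nu/\mu}\,\tilde G_\nu$ of Theorem~\ref{fgplus} and the upper-triangular structure of $\mathrm{d}_{\nu/\mu}$ (unit diagonal $\mathrm{d}_{\mu/\mu} = 1$, finite support per $\mu$), triangular inversion expresses each $\tilde G_\nu(\gamma/N, \ldots, \gamma/N)$ as a finite linear combination of $F_\mu(\hat\phi_{\gamma/N}^{\cup N})$ values, which converge term-by-term to $F_\mu(\hat\pi_\gamma) = e^{\gamma \ell(\mu)} \gamma^{|\mu|} f^\mu/|\mu|!$; this gives a finite nonnegative limit $\tilde G_\nu(\hat\pi_\gamma) = \lim_N \tilde G_\nu(\gamma/N, \ldots, \gamma/N)$. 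A Fatou-type argument applied to the finite Schur expansion $\tilde G_\lambda(\gamma/N\times N) = \sum_{\mu : \ell(\mu) \le N} \mathrm{r}_{\mu/\lambda}\,s_\mu(\gamma/N\times N)$ (with termwise Plancherel limit $s_\mu(\gamma/N\times N) \to \gamma^{|\mu|} f^\mu/|\mu|!$) then forces the formal Schur sum $\sum_\mu \mathrm{r}_{\mu/\lambda}\,\pi_\gamma(s_\mu)$ to converge, bounded by this finite limit, and consistency with the $F_\mu$-expansion identifies the two values. The principal obstacle is the $\pi_\gamma$ case itself: naive bounds such as $\mathrm{r}_{\mu/\lambda} \le f^{\mu/\lambda}$ actually fail (for instance $\mathrm{r}_{(1^4)/(1^2)} = 3 > 1 = f^{(1^4)/(1^2)}$), so convergence of the Schur sum cannot be obtained by an elementary estimate; one must first construct $\hat\pi_\gamma$ indirectly via the $F_\mu$-triangular-inversion of generating-function data, then invoke the finite-variable limit and a Fatou passage to deduce convergence of the formal sum.
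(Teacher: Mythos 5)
Your treatment of $\hat\phi_{\alpha}$ and $\hat\varepsilon_{\beta}$ is correct; for $\hat\varepsilon_\beta$ you count $\mathrm{r}_{(1^m)/(1^k)}=\binom{m-1}{k-1}$ directly instead of invoking the involution $\omega$ and the formula $\omega(\tilde G_\lambda(\mathbf{x}))=\tilde G_{\lambda'}(x_1/(1-x_1),\ldots)$ as the paper does, and both routes give $\tilde G_{(1^k)}(\hat\varepsilon_\beta)=(\beta/(1-\beta))^k$. The problem is the Plancherel case, which is exactly the crux of the lemma. Your argument hinges on the claim that "triangular inversion expresses each $\tilde G_\nu$ as a \emph{finite} linear combination of the $F_\mu$." This is false. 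The system $F_\mu=\tilde G_\mu+\sum_{\nu\supsetneq\mu}\mathrm{d}_{\nu/\mu}\tilde G_\nu$ is unitriangular with finitely many terms in each \emph{row}, but its inverse is an infinite series: already for one-row shapes, $F_{(n)}=H_n=\tilde G_{(n)}+\tilde G_{(n+1)}$ forces $\tilde G_{(n)}=H_n-H_{n+1}+H_{n+2}-\cdots$, and since $H_m=(1+\tilde G_{(1)})h_m$ with the $h_m$ linearly independent, no finite linear combination of the $F_\mu$ (or $H_m$) equals $\tilde G_{(n)}$. Because the inverse expansion is infinite and has alternating signs, you can neither pass to the $N\to\infty$ limit term by term nor extract from it the uniform bound on $\tilde G_\lambda(\gamma/N,\ldots,\gamma/N)$ that your subsequent Fatou step needs; as written, the finiteness of $\liminf_N\tilde G_\lambda(\gamma/N,\ldots,\gamma/N)$ is never established, so the convergence of $\sum_\mu\mathrm{r}_{\mu/\lambda}\,\pi_\gamma(s_\mu)$ is not proved.

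The missing idea is a domination argument, and it is elementary (contrary to your closing remark): the paper observes that $(\tilde G_{(1)})^{|\lambda|}-\tilde G_\lambda$ lies in $\Gamma_+$ and is therefore also Schur-positive, so if $(\tilde G_{(1)})^{|\lambda|}=\sum_\mu b_\mu s_\mu$ then $b_\mu\ge\mathrm{r}_{\mu/\lambda}$, whence $0\le\sum_\mu\mathrm{r}_{\mu/\lambda}\,\pi_\gamma(s_\mu)\le\sum_\mu b_\mu\,\pi_\gamma(s_\mu)=\bigl(\tilde G_{(1)}(\hat\pi_\gamma)\bigr)^{|\lambda|}=(e^\gamma-1)^{|\lambda|}$, where $\tilde G_{(1)}(\hat\pi_\gamma)=e^\gamma-1$ is computed exactly as you compute the $\lambda=(1)$ case. (Your observation that $\mathrm{r}_{\mu/\lambda}\le f^{\mu/\lambda}$ fails is true but beside the point; the correct comparison coefficient is $b_\mu$, not $f^{\mu/\lambda}$.) The same comparison, applied monomialwise at the finite-variable level, would also repair your approximation scheme, since it gives $\tilde G_\lambda(\gamma/N,\ldots,\gamma/N)\le\bigl((1+\gamma/N)^N-1\bigr)^{|\lambda|}$, after which your Fatou passage goes through; but without some such domination the $\pi_\gamma$ case of your proof does not close.
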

\begin{proof}
Let us check that 
$$\tilde G_{\lambda}(\hat\phi_{\alpha}) =  
\begin{cases}
	\alpha^{|\lambda|}, & \text{ if } \ell(\lambda) = 1,\\
	0, & \text{ otherwise}. 
\end{cases}
$$
Indeed, 
$$\sum_{\mu} \mathrm{r}_{\mu/(n)}\, \phi_{\alpha}(s_{\mu}) = \phi_{\alpha}(s_{(n)}) = \alpha^n$$ as $\mathrm{r}_{\mu/(n)} = 0$ if $\mu \supset (n)$ and $\mu \ne (n)$ and $\sum_{\mu \supset \lambda} \mathrm{r}_{\mu/\lambda}\, \phi_{\alpha}(s_{\mu}) = 0$ if $\ell(\lambda) \ge 2$ since  $\ell(\mu) \ge 2$ when $\mathrm{r}_{\mu/\lambda} > 0$ and then $\phi_{\alpha}(s_{\mu}) = 0$. 
Equivalently, $\hat\phi_{\alpha}$ just corresponds to the specialization $(x_1, x_2, \ldots) \mapsto (\alpha, 0, 0, \ldots)$.

To see the effect of $\hat\varepsilon_{\beta}$, note that the involution $\omega : \hat\Lambda \to \hat\Lambda$ (extended on the completion $\hat\Lambda$ via $\omega(s_{\lambda}) = s_{\lambda'}$) satisfies (see \cite{pp1, dy}) 
$$\omega(\tilde G_{\lambda}(x_1, x_2, \ldots)) = \tilde G_{\lambda'}\left(\frac{x_1}{1 - x_1}, \frac{x_2}{1 - x_2}, \ldots\right).$$ 
Since $\varepsilon_{\beta} = \phi_{\beta} \circ \omega$ and $\hat\varepsilon_{\beta} = \hat\phi_{\beta} \circ \omega$ we obtain
$$
\tilde G_{\lambda}(\hat\varepsilon_{\beta}) = 
\begin{cases}
	\left(\frac{\beta}{1 - \beta} \right)^{|\lambda|}, & \text{ if } \lambda_1 = 1,\\
	0, & \text{ otherwise}. 
\end{cases}
$$

As for the Plancherel specialization $\hat\pi_\gamma$ we have 
$$
\sum_{\mu} \mathrm{r}_{\mu/\lambda}\, \pi_{\gamma}(s_{\mu}) 
= \sum_{\mu} \mathrm{r}_{\mu/\lambda}\, \frac{\gamma^{|\mu|} f^{\mu}}{|\mu|!} \in \mathbb{R}[[\gamma]]
$$
Let us show that this series converges for all $\gamma \in \mathbb{R}_{\ge 0}$ (and hence for $\gamma \in \mathbb{R}$). Note that for $\lambda = (1)$ we have:
$$
\sum_{\mu \supset (1)} \mathrm{r}_{\mu/(1)}\, \pi_{\gamma}(s_{\mu}) 
= 
\sum_{n \ge 1} \mathrm{r}_{(1^n)/(1)}\, \pi_{\gamma}(s_{(1^n)})  = \sum_{n = 1}^{\infty} \frac{\gamma^n}{n!} = -1 + e^{\gamma},
$$
which gives the value of $\tilde G_{(1)}(\hat\pi_{\gamma})$.
Consider the expansion 
$$
(\tilde G_{(1)})^{|\lambda|} = \sum_{\nu} a_{\nu}\, \tilde G_{\nu} = \sum_{\mu} b_{\mu}\, s_{\mu}, \quad a_{\nu}, b_{\mu} \ge 0.
$$
Note that the function $(\tilde G_{(1)})^{|\lambda|} - \tilde G_{\lambda}$ is both Grothendieck and Schur positive and hence we have $b_{\mu} \ge \mathrm{r}_{\mu/\lambda}$ implying
$$
0 \le \sum_{\mu} \mathrm{r}_{\mu/\lambda}\, \pi_{\gamma}(s_{\mu})  \le \sum_{\mu} b_{\mu}\, \pi_{\gamma}(s_{\mu}) = (-1 + e^{\gamma})^{|\lambda|}.
$$
Therefore, the series 
$$
\tilde G_{\lambda}(\hat\pi_{\gamma}) = \sum_{\mu} \mathrm{r}_{\mu/\lambda}\, \pi_{\gamma}(s_{\mu})
$$
converges for all $\gamma \in \mathbb{R}_{\ge 0}$ and the homomorphism $\hat\pi_{\gamma}$ is a well-defined and $G$-positive.
\end{proof}

\begin{proposition}\label{phig}
Let $\varphi$ be the union of extended Schur-positive generators $(\hat\phi_{\alpha_n}),$ $(\hat\varepsilon_{\beta_n})$, $\hat\pi_{\gamma}$ 
for nonnegative real parameters
$\{\alpha_n\}$,  $\{\beta_n < 1\}$, $\gamma$, such that  $\sum_{n} (\alpha_n + \beta_n) < \infty$. 
Then $\varphi$ is $G$-positive. 
In particular, we have 
\begin{equation}\label{g1p}
\tilde G_{(1)}(\varphi) = -1 + e^{\gamma} \prod_{n = 1}^{\infty} \frac{1 + \alpha_n}{1 - \beta_n}
\end{equation}
and the following generating function for the elements $\{ \tilde G_{(n)}(\varphi)\}$
\begin{equation}\label{eq3}
1 + (z + 1) \sum_{n = 1}^{\infty} \tilde G^{}_{(n)}(\varphi) z^{n - 1} = e^{\gamma (z + 1)} \prod_{n = 1}^{\infty} \frac{1 + \alpha_n}{1 - \alpha_n z} \prod_{n = 1}^{\infty} \frac{1 + \beta_n z}{1 - \beta_n}
\end{equation}
\end{proposition}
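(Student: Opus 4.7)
The plan is to identify $\varphi$ with the extension $\hat\rho$ to $\Gamma$ of the Schur-positive specialization
$$\rho = \pi_\gamma \cup \bigcup_{n} \phi_{\alpha_n} \cup \bigcup_{n} \varepsilon_{\beta_n}$$
of $\Lambda$ from Theorem~\ref{etfact}, and then read off the generating function for $\{\tilde G_{(n)}(\varphi)\}$ by applying $\rho$ to the identity of Lemma~\ref{lhb}.

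First I would handle well-definedness and $G$-positivity together. By Lemma~\ref{sext}, each of the single generators $\hat\phi_{\alpha_n}$, $\hat\varepsilon_{\beta_n}$, $\hat\pi_\gamma$ is a $G$-positive specialization extending the corresponding Schur-positive generator of $\Lambda$, and the earlier lemma on unions of $G$-positive homomorphisms disposes of finite unions. The real issue is convergence of the infinite union, which I would handle by mimicking the dominance argument used in the second half of the proof of Theorem~\ref{t2}: for any $\lambda$, the element $(\tilde G_{(1)})^{|\lambda|} - \tilde G_\lambda$ is simultaneously Grothendieck- and Schur-positive, so writing $(\tilde G_{(1)})^{|\lambda|} = \sum_\mu b_\mu s_\mu$ in $\hat\Lambda$ forces $\mathrm{r}_{\mu/\lambda} \le b_\mu$, whence
$$
0 \le \tilde G_\lambda(\hat\rho) = \sum_{\mu \supset \lambda} \mathrm{r}_{\mu/\lambda}\,\rho(s_\mu) \le \sum_\mu b_\mu\,\rho(s_\mu) = \bigl(\rho(\tilde G_{(1)})\bigr)^{|\lambda|}.
$$
The value $\rho(\tilde G_{(1)}) = -1 + \rho(E(1)) = -1 + e^\gamma \prod_n (1+\alpha_n)/(1-\beta_n)$ is finite under the hypotheses $\beta_n < 1$ and $\sum_n(\alpha_n + \beta_n) < \infty$, computed via the $\omega$-dual form of Theorem~\ref{ets}. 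Thus $\varphi = \hat\rho$ is well-defined and, as a union of $G$-positive specializations, is itself $G$-positive.

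Next, to extract \eqref{eq3}, I would apply $\rho$ to the identity $H_n = H_0 \cdot h_n$ of Lemma~\ref{lhb}, which lives in $\hat\Lambda$. Ring-homomorphism gives $H_n(\varphi) = H_0(\varphi)\,\rho(h_n)$ for all $n \ge 0$, so
$$
1 + (z+1)\sum_{n \ge 1} \tilde G_{(n)}(\varphi)\,z^{n-1} = \sum_{n \ge 0} H_n(\varphi)\,z^n = H_0(\varphi)\cdot\rho(H(z)).
$$
The Edrei--Thoma factorization (Theorem~\ref{ets}) yields $\rho(H(z)) = e^{\gamma z}\prod_n (1+\beta_n z)/(1-\alpha_n z)$, and the previous paragraph computed $H_0(\varphi) = 1 + \tilde G_{(1)}(\varphi) = e^\gamma \prod_n (1+\alpha_n)/(1-\beta_n)$, which is exactly \eqref{g1p}. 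Multiplying, grouping $e^\gamma \cdot e^{\gamma z} = e^{\gamma(z+1)}$, and using the trivial rearrangement
$$
\frac{\prod_n(1+\alpha_n)}{\prod_n(1-\beta_n)} \cdot \prod_n \frac{1+\beta_n z}{1-\alpha_n z} = \prod_n \frac{1+\alpha_n}{1-\alpha_n z} \cdot \prod_n \frac{1+\beta_n z}{1-\beta_n},
$$
produces \eqref{eq3}; setting $z = 0$ recovers \eqref{g1p}.

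The only substantive obstacle is the infinite-union convergence, and that is controlled by the Grothendieck-vs-Schur dominance bound already developed for Theorem~\ref{t2}. Everything else reduces directly to Lemma~\ref{lhb} and the classical Edrei--Thoma parametrization of Schur-positive specializations of $\Lambda$.
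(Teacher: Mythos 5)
Your proposal is correct and follows essentially the same route as the paper: Lemma~\ref{sext} for the individual generators, closure of $G$-positivity under unions, and Lemma~\ref{lhb} together with the Edrei--Thoma parametrization of $\rho(H(z))$ to produce \eqref{g1p} and \eqref{eq3}. The only difference is that you spell out the convergence of the infinite union via the dominance bound $(\tilde G_{(1)})^{|\lambda|} - \tilde G_{\lambda} \in \Gamma_+$ (the same device used in Lemma~\ref{sext} and the converse part of Theorem~\ref{t2}), which the paper leaves implicit in the phrase ``given $\tilde G_{(1)}(\varphi)$ converges.''
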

\begin{proof}
By Lemma~\ref{sext}, the specializations $\hat\phi_{\alpha_n},$ $\hat\varepsilon_{\beta_n}$, $\hat\pi_{\gamma}$ are all $G$-positive and hence their union is also $G$-positive given $\tilde G_{(1)}(\varphi)$ converges. The formulas \eqref{g1p}, \eqref{eq3} then follow from Schur expansions of $\{\tilde G_{(n)}\}$, see Lemma~\ref{lhb}.
\end{proof}

\begin{remark}
Instead of the $G$-positive generators $\hat\varepsilon_{\beta'}$ for $\beta' < 1$ we can use another specializations $\psi_{\beta} := \hat\phi_{\beta} \circ \tau$, i.e. $\psi_{\beta} : \tilde G_{\lambda} \mapsto \tilde G_{\lambda'}(\hat\phi_{\beta})$ for $\beta \ge 0$. Here we have $\tilde G_{\lambda}(\psi_{\beta}) = \beta^{|\lambda|} \ge 0,$ if $\lambda_1 = 1$ and $0$ otherwise. One can see that $\psi_{\beta} = \hat\varepsilon_{\beta'}$, where $\beta' = \beta/(1 + \beta)$. Then we can alternatively redefine $G$-positive homomorphisms $\varphi$ in Theorem~\ref{phig} as the union of generators $\hat\phi_{\alpha_n},$ $\psi_{\beta_n},$ and $\hat\pi_{\gamma}$ for nonnegative reals $\{ \alpha_n\}$, $\{\beta_n \},$ $\gamma$ such that $\sum_{n}(\alpha_n + \beta_n) < \infty$. 
In particular, 
$$
\tilde G_{(1)}(\varphi) = -1 + e^{\gamma} \prod_{n = 1}^{\infty} {(1 + \alpha_n)(1 + \beta_n)}.
$$
\end{remark}

\section{Harmonic functions  and boundary of a filtered Young's graph}
Consider the (infinite) {\it filtered Young graph} $\widetilde{\mathbb{Y}}^{}$ defined as follows:
\begin{itemize}
\item[(i)] its vertices are labeled by partitions $\lambda$;
\item[(ii)] there is an arc $\lambda \to \mu$ iff $\mu/\lambda$ is a {\it rook strip} (i.e. no two boxes lie in the same row or column)
\end{itemize}

\begin{definition}
Say that a function $\varphi : \mathcal{P} \to \mathbb{R}_{\ge 0}$ is {\it harmonic} on $\widetilde{\mathbb{Y}}$ if 
$$
\varphi(\varnothing) = 1\quad \text{ and }\quad \varphi(\lambda) = \sum_{\lambda \to \mu} 
\varphi(\mu).
$$
\end{definition}
Let $H^{}(\widetilde{\mathbb{Y}})$ be the set of harmonic functions on $\widetilde{\mathbb{Y}}$. Then $H^{}(\widetilde{\mathbb{Y}})$ is a convex set and let $\partial \widetilde{\mathbb{Y}}$ be the 
set of {\it extreme points} of $H^{}(\widetilde{\mathbb{Y}})$, or the {\it boundary} of $\widetilde{\mathbb{Y}}$. 

For every function $\varphi \in H^{}(\widetilde{\mathbb{Y}})$ define the linear functional 
$\hat\varphi : \Gamma \to \mathbb{R}$ such that 
$$\hat\varphi(\tilde G_{\lambda}) := \varphi(\lambda).$$
Alternatively, consider linear functionals $\hat\varphi : \Gamma \to \mathbb{R}$ satisfying the following properties:
\begin{itemize}
\item[(i)] $\hat\varphi(1) = 1$ 

\item[(ii)] $\hat\varphi(\tilde G^{}_{\lambda}) \ge 0$ 

\item[(iii)] $\hat\varphi(\tilde G^{}_{(1)} \cdot \tilde G_{\lambda}) = \hat\varphi(\tilde G_{\lambda})$ 
\end{itemize}
Using the simple {Pieri rule} we obtain
$$
\hat\varphi(\tilde G^{}_{\lambda}) = \hat\varphi(\tilde G^{}_{(1)} \cdot \tilde G^{}_{\lambda}) = \sum_{\mu/\lambda \text{ rook strip}} 
\hat\varphi(\tilde G^{}_{\mu})
$$
i.e. the function $\varphi(\lambda) = \hat\varphi(\tilde G_{\lambda})$ is harmonic on $\widetilde{\mathbb{Y}}$.

The next theorem is a version of the Vershik--Kerov ``ring theorem" \cite{vk} for $\widetilde{\mathbb{Y}}$.
\begin{theorem}
We have: $\varphi \in \partial \widetilde{\mathbb{Y}}$ i.e. $\varphi$ is extreme if and only if the linear functional $\hat\varphi$ is a normalized $G$-positive homomorphism of $\Gamma$. 
\end{theorem}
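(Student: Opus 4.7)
For the forward direction ($\varphi$ extreme $\Rightarrow$ $\hat\varphi$ multiplicative), fix a partition $\nu$ and consider the auxiliary function $\varphi_\nu(\lambda) := \hat\varphi(\tilde G_\nu \cdot \tilde G_\lambda) = \sum_\mu c^\mu_{\nu\lambda}\,\varphi(\mu)$. It is nonnegative since $c^\mu_{\nu\lambda} \ge 0$, and a short computation using condition (iii) applied to $\tilde G_\nu \tilde G_\lambda$ together with the simple Pieri rule and commutativity shows it is harmonic in $\lambda$. The crucial step is the pointwise bound $\varphi_\nu(\lambda) \le \varphi(\lambda)$, which I would derive from the observation that $(\tilde G_{(1)})^{|\nu|} - \tilde G_\nu \in \Gamma_+$ (the coefficient of $\tilde G_\nu$ in $(\tilde G_{(1)})^{|\nu|}$ equals the top-degree Schur coefficient $f^\nu \ge 1$, and all other coefficients are nonnegative): multiplying by $\tilde G_\lambda \in \Gamma_+$, applying $\hat\varphi$, and using condition (iii) iterated $|\nu|$ times yields $\varphi(\lambda) - \varphi_\nu(\lambda) \ge 0$. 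Hence both $\varphi_\nu$ and $\varphi - \varphi_\nu$ are nonnegative harmonic, and a case analysis on $\varphi(\nu)$ closes the argument: if $\varphi(\nu) \in (0,1)$ the convex decomposition
\[
\varphi = \varphi(\nu) \cdot \frac{\varphi_\nu}{\varphi(\nu)} + (1-\varphi(\nu)) \cdot \frac{\varphi - \varphi_\nu}{1-\varphi(\nu)}
\]
into two normalized harmonic functions forces $\varphi_\nu = \varphi(\nu)\,\varphi$ by extremality; if $\varphi(\nu) \in \{0,1\}$ one of $\varphi_\nu$ or $\varphi - \varphi_\nu$ is a nonnegative harmonic function vanishing at $\varnothing$, and hence vanishes identically since every partition is reachable from $\varnothing$ by a chain of single-box arcs along which the harmonic identity propagates the vanishing. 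Multiplicativity on basis pairs then extends linearly to all of $\Gamma$.

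For the converse, suppose $\hat\varphi$ is a normalized $G$-positive homomorphism and $\varphi = t\varphi_1 + (1-t)\varphi_2$ with $t \in (0,1)$ and $\varphi_i$ normalized harmonic. Iterating the harmonic identity from $\varnothing$ gives $\varphi(\lambda) \le 1$ for all $\lambda$, so $H(\widetilde{\mathbb{Y}})$ is a compact metrizable convex subset of $[0,1]^{\mathcal{P}}$, and Choquet--Bishop--de Leeuw supplies an integral representation $\hat\varphi_1 = \int_{\partial\widetilde{\mathbb{Y}}} \chi\, d\mu_1(\chi)$. By the already-proved forward direction each extreme $\chi$ is a homomorphism, so Jensen's inequality applied to the convex function $x \mapsto x^n$ gives
\[
\varphi_1(\nu)^n = \left(\int \chi(\tilde G_\nu)\, d\mu_1\right)^{n} \le \int \chi(\tilde G_\nu)^n\, d\mu_1 = \hat\varphi_1(\tilde G_\nu^n).
\]
Combined with the domination $\hat\varphi_1 \le t^{-1}\hat\varphi$ on $\Gamma_+$ and the multiplicativity $\hat\varphi(\tilde G_\nu^n) = \varphi(\nu)^n$, this yields $\varphi_1(\nu) \le t^{-1/n}\varphi(\nu)$; letting $n \to \infty$ gives $\varphi_1(\nu) \le \varphi(\nu)$, and by symmetry $\varphi_2 \le \varphi$. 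Since $\varphi = t\varphi_1 + (1-t)\varphi_2$ with both summands bounded pointwise by $\varphi$, equality is forced and $\varphi_1 = \varphi_2 = \varphi$.

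The delicate point is the logical order in the converse: Choquet--Bishop--de Leeuw must be invoked only after the forward direction has identified the extreme points of $H(\widetilde{\mathbb{Y}})$ with homomorphisms, since Jensen is applied using multiplicativity of $\chi$ under the integral. A more self-contained route avoiding Choquet machinery would run a Cauchy--Schwarz iteration directly on the bilinear form $(f,g) \mapsto \hat\varphi_1(fg)$, but verifying its positive semi-definiteness on $\Gamma \times \Gamma$ does not follow from $\hat\varphi_1 \ge 0$ on the basis cone $\Gamma_+$ alone, so some additional input --- such as a probabilistic or Gelfand-type interpretation of $\hat\varphi_1$ --- would be required.
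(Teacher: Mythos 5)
Your argument is correct, and its overall architecture matches the paper's: it is the Vershik--Kerov ``ring theorem'' scheme, proving that extremality implies multiplicativity via a convex splitting built from positivity in $\Gamma_+$ (in particular $(\tilde G_{(1)})^{|\nu|}-\tilde G_{\nu}\in\Gamma_+$ and condition (iii)), and proving the converse via a Choquet integral representation combined with the already-established forward direction. The execution differs in both halves, though. In the forward direction the paper splits the \emph{functional} as $\hat\varphi=c_1\varphi_{g_1}+c_2\varphi_{g_2}$ with $g_1=\tfrac12\tilde G_{\mu}$, $g_2=(\tilde G_{(1)})^{m}-g_1$, the factor $\tfrac12$ ensuring $c_2>0$ so no endpoint case arises, and it handles $\hat\varphi(\tilde G_{\mu})=0$ by the sandwich $0\le\hat\varphi(\tilde G_{\mu}\tilde G_{\lambda})\le\hat\varphi(\tilde G_{\mu}(\tilde G_{(1)})^{n})=0$; you instead split the \emph{harmonic function} into the normalized pieces $\varphi_\nu/\varphi(\nu)$ and $(\varphi-\varphi_\nu)/(1-\varphi(\nu))$ and dispose of $\varphi(\nu)\in\{0,1\}$ by propagating vanishing along single-box arcs, which is legitimate since a single box is a rook strip and the values are nonnegative. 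In the converse the paper applies Choquet to $\hat\varphi$ itself and finishes with the variance-zero identity $\hat\varphi(G^{2})=\hat\varphi(G)^{2}$, forcing the representing measure to be a delta; you apply Choquet to a component $\varphi_1$ of a putative nontrivial decomposition and combine Jensen's inequality with the domination $\hat\varphi_1\le t^{-1}\hat\varphi$ on $\Gamma_+$ and multiplicativity of $\hat\varphi$, then take $n$th roots and let $n\to\infty$ to get $\varphi_1\le\varphi$, whence $\varphi_1=\varphi_2=\varphi$. The paper's variance trick is shorter and avoids the limiting step, while your route makes the pointwise domination of the components explicit; both rely on the same logical order you flag at the end (the forward direction must precede the Choquet step, since multiplicativity of the extreme points is used under the integral), and the paper's proof has exactly the same dependence.
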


\begin{proof}
The same proof as in the case of graded Young graph and the ring $\Lambda$ with the Schur basis (see \cite{bool}, also \cite{go} for a general statement) works in our case as $\tilde G_{\mu}\, \tilde G_{\nu} \in \Gamma_+$ and $(\tilde G_{(1)})^{|\lambda|} - \tilde G_{\lambda} \in \Gamma_+$ for all $\lambda, \mu, \nu$. We reproduce it here for completeness.

Suppose $f \in \Gamma_+$ such that $\hat\varphi(f) > 0$. Let us then check that the new function 
$$\varphi_f(g) := \frac{\hat\varphi(f g)}{\hat\varphi(f)}, \quad g \in \Gamma$$ 
also satisfies the properties (i)--(iii) of linear functionals $\hat\varphi$. Indeed, we have (i) $\varphi_f(1) = 1$, (ii) $\varphi_f(\tilde G_{\lambda}) = {\hat\varphi(f \cdot \tilde G_{\lambda})}/{\hat\varphi(f)} \ge 0$ since $f \cdot \tilde G_{\lambda} \in \Gamma_+$ expands as a nonnegative linear combination of $\{\tilde G_{\mu} \}$, and for (iii) we have 
$$\varphi_f(\tilde G_{(1)} \cdot \tilde G_{\lambda}) = {\hat\varphi(f \cdot \tilde G_{(1)} \cdot \tilde G_{\lambda})} / {\hat\varphi(f)}= {\hat\varphi(f \cdot \tilde G_{\lambda})} / {\hat\varphi(f)} = \varphi_f(\tilde G_{\lambda}).$$

Suppose that $\varphi$ is extreme. We need to show that 
$\hat\varphi(\tilde G^{}_{\mu} \cdot G) = \hat\varphi(\tilde G^{}_{\mu})\, \hat\varphi(G)$ for all $\mu \in \mathcal{P}$ and $G \in \Gamma$. 
Note that for $\lambda \vdash n$ we have $(\tilde G_{(1)})^n - \tilde G_{\lambda} \in \Gamma_+$ as a consequence of Pieri rule.

If $\hat\varphi(\tilde G^{}_{\mu}) = 0,$ then for $\lambda \vdash n$ we have
$$
0 \le \hat\varphi(\tilde G^{}_{\mu} \cdot \tilde G^{}_{\lambda}) \le \hat\varphi(\tilde G^{}_{\mu}\cdot  (\tilde G^{}_{(1)})^n) = \hat\varphi(\tilde G_{\mu}) = 0.
$$
Hence $\hat\varphi(\tilde G^{}_{\mu} \cdot \tilde G^{}_{\lambda}) = 0$ and the multiplicativity holds.

If $\hat\varphi(\tilde G^{}_{\mu}) > 0$ for $\mu \vdash m$, consider the functions
$$
g_1 := \frac{1}{2}\, \tilde G^{}_{\mu}, \qquad g_2 := (\tilde G^{}_{(1)})^m - g_1 \in \Gamma_+.
$$
Then for $G \in \Gamma$ we have
$$
\hat\varphi(G) = \hat\varphi((\tilde G^{}_{(1)})^{m} \cdot G) = \hat\varphi(g_1 \cdot G) + \hat\varphi(g_2 \cdot G).
$$
We clearly have $c_1 := \hat\varphi(g_1) > 0$, $c_2 := \hat\varphi(g_2) > 0$ such that  $c_1 + c_2 = 1$ 
and hence
$$
\hat\varphi = c_1\, \varphi_{g_1} + c_2\, \varphi_{g_2}.
$$
Since $\varphi$ is extreme we have $\hat\varphi = \varphi_{g_1}$ which implies that $\hat\varphi(G) = \hat\varphi(\tilde G_{\mu} \cdot G) / \hat\varphi(\tilde G_{\mu})$ as needed.

To show the converse part, suppose that $\hat\varphi$ is a homomorphism of $\Gamma$, and let us show that $\varphi$ is extreme. By Choquet's theorem we have the integral presentation
$$
\hat\varphi(G) = \int_{f \in \partial \widetilde{\mathbb{Y}}} \hat f(G)\, \mu(df), \quad G \in \Gamma
$$
where $\mu$ is a probability measure on $\partial \widetilde{\mathbb{Y}}$. View the function $f \to \hat f(G)$ as a random variable on the space $(\partial \widetilde{\mathbb{Y}}, \mu)$. By the integral representation its expectation is $\hat\varphi(G)$. We also have $\hat f$ is a homomorphism of $\Gamma$ by the argument above. 
Now observe that
$$
\left(\int_{f \in \partial \widetilde{\mathbb{Y}}} \hat f(G)\, \mu(df) \right)^2 = \hat\varphi(G)^2 = \hat\varphi(G^2) 
= \int_{f \in \partial \widetilde{\mathbb{Y}}} \hat f(G^2)\, \mu(df) = \int_{f \in \partial \widetilde{\mathbb{Y}}} \hat f(G)^2 \mu(df)
$$
implying that 
$\hat f(G)$  
has variance $0$. 
Therefore, $\hat f(G) = \hat\varphi(G)$ almost surely.
Hence $\mu$ is a delta-measure and $\varphi$ is extreme.
\end{proof}

\section{Positivity of Grothendieck polynomials with alternating signs}\label{galt} 
Let us define the functions 
$${G}_{\lambda/\!\!/\mu}(\mathbf{x}) := (-1)^{|\lambda/\mu|}\, \tilde G_{\lambda/\!\!/\mu}(-\mathbf{x})$$ which is a usual definition of symmetric Grothendieck polynomials whose monomial expansion has alternating signs. Namely, we have (see \cite{dy2})
$$
{G}_{\lambda/\!\!/\mu}(\mathbf{x}) = \sum_{T \in SSVT(\lambda/\!\!/\mu)} (-1)^{|T| - |\lambda/\mu|} x^{T}, 
$$
where $|T|$ is the number of entries in $T$.
Clearly, $\{{G}_{\lambda}\}$ is a basis of $\Gamma$ and we have 
$$
{G}_{\mu} \cdot {G}_{\nu} = \sum_{\lambda} (-1)^{|\lambda| - |\mu| - |\nu|} c^{\lambda}_{\mu \nu}\, {G}_{\lambda}.
$$ 
Note also that 
$$
{G}_{(1)/\!\!/(1)} = 1 - {G}_{(1)} = \prod_{n = 1}^{\infty} (1 - x_n).
$$ 
\begin{definition}
We say that a homomorphism $\varphi : \Gamma \to \mathbb{R}$ is {\it $\overline{G}$-positive} if $\varphi({G}_{\lambda/\!\!/\mu}) \ge 0$ for all $\lambda, \mu$.
\end{definition}

\begin{lemma}
Let $\varphi_1, \varphi_2$ be $\overline{G}$-positive homomorphisms of $\Gamma$. Then their union $\varphi = (\varphi_1, \varphi_2)$ is also $\overline{G}$-positive.
\end{lemma}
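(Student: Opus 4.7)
The plan is to derive a sign-free branching identity for $G_{\lambda/\!\!/\mu}$ under the splitting of variables into $\varphi_1, \varphi_2$, and then read off nonnegativity directly.

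First I would substitute $\mathbf{x} \mapsto -\mathbf{x}$, $\mathbf{y} \mapsto -\mathbf{y}$ in the branching formula \eqref{br} for $\tilde G_{\lambda/\!\!/\mu}$ and multiply both sides by $(-1)^{|\lambda/\mu|}$. Since the summation is over $\mu \subseteq \nu \subseteq \lambda$ and $|\lambda/\mu| = |\lambda/\nu| + |\nu/\mu|$, every sign cancels and we obtain the sign-free finite expansion
\begin{equation*}
G_{\lambda/\!\!/\mu}(\mathbf{x}, \mathbf{y}) = \sum_{\nu} G_{\lambda/\!\!/\nu}(\mathbf{x})\, G_{\nu/\!\!/\mu}(\mathbf{y}).
\end{equation*}

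Next I would check that Definition~\ref{uu} extends to skew shapes, namely that
\begin{equation*}
\varphi(\tilde G_{\lambda/\!\!/\mu}) = \sum_{\nu} \tilde G_{\lambda/\!\!/\nu}(\varphi_1)\, \tilde G_{\nu/\!\!/\mu}(\varphi_2).
\end{equation*}
This is formal: expand $\tilde G_{\lambda/\!\!/\mu} = \sum_{\eta} d^{\lambda}_{\mu\eta}\, \tilde G_{\eta}$ via \eqref{pprod}, apply Definition~\ref{uu} to each $\tilde G_{\eta}$, and reassemble using the identity \eqref{br} applied to $\tilde G_{\eta}(\mathbf{x}, \mathbf{y})$. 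Repeating the substitution $\mathbf{x} \to -\mathbf{x}$, $\mathbf{y} \to -\mathbf{y}$ and multiplying by $(-1)^{|\lambda/\mu|}$ then yields the corresponding identity for $G$:
\begin{equation*}
\varphi(G_{\lambda/\!\!/\mu}) = \sum_{\nu} G_{\lambda/\!\!/\nu}(\varphi_1)\, G_{\nu/\!\!/\mu}(\varphi_2).
\end{equation*}

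Finally, each factor on the right-hand side is nonnegative by the hypothesis that $\varphi_1, \varphi_2$ are $\overline{G}$-positive, and the sum is finite; hence $\varphi(G_{\lambda/\!\!/\mu}) \ge 0$ for all $\lambda, \mu$, which is what needs to be shown. The only step involving any bookkeeping is the extension of Definition~\ref{uu} to skew shapes, and this mirrors the argument used in the $G$-positive analogue earlier in the paper, resting on the same branching and expansion identities \eqref{pprod} and \eqref{br}.
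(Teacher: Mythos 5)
Your proposal is correct and follows essentially the same route as the paper, whose proof is the one-line observation that the branching formula \eqref{br} remains valid (sign-free) for the functions $G_{\lambda/\!\!/\mu}$; you simply spell out the sign cancellation $|\lambda/\mu|=|\lambda/\nu|+|\nu/\mu|$ and the compatibility of the union in Definition~\ref{uu} with skew shapes, both of which are the implicit content of the paper's argument.
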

\begin{proof}
Follows from the branching formula \eqref{br} which remains the same for the functions ${G}$.
\end{proof}

\begin{lemma}\label{gine}
Let $\varphi$ be a $\overline{G}$-positive homomorphism of $\Gamma$. Then 
\begin{equation*}
1 \ge {G}_{(1)}(\varphi) \ge {G}_{(2)}(\varphi) \ge \ldots \ge 0. 
\end{equation*}
\end{lemma}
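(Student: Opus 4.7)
My plan is to reduce the entire chain to a single family of explicit identities among skew symmetric Grothendieck polynomials, and then apply $\overline{G}$-positivity of $\varphi$. The outer bounds are handled immediately: $G_{(n)}(\varphi) \ge 0$ follows from $G_{(n)} = G_{(n)/\!\!/\varnothing}$ via $\overline{G}$-positivity, and $1 \ge G_{(1)}(\varphi)$ is obtained by applying $\varphi$ to the identity $G_{(1)/\!\!/(1)} = \prod_k (1-x_k) = 1 - G_{(1)}$ recorded just before the lemma.

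The heart of the argument is to establish, for every $n \ge 1$, the identity
$$G_{(n+1)/\!\!/(1)} = G_{(n)} - G_{(n+1)} \qquad \text{in } \Gamma.$$
Once this is in hand, applying $\varphi$ gives $G_{(n)}(\varphi) - G_{(n+1)}(\varphi) = \varphi(G_{(n+1)/\!\!/(1)}) \ge 0$, which closes the monotone chain. Via the definition $G_{\lambda/\!\!/\mu}(\mathbf{x}) = (-1)^{|\lambda/\mu|}\tilde G_{\lambda/\!\!/\mu}(-\mathbf{x})$ at the start of this section, this identity is equivalent to the sign-free statement
$$\tilde G_{(n+1)/\!\!/(1)} = \tilde G_{(n)} + \tilde G_{(n+1)}.$$

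I would prove the unsigned identity by direct inspection of set-valued tableaux. By the definition of $\lambda/\!\!/\mu$, the shape $(n+1)/\!\!/(1)$ is a single row of $n+1$ cells in which the first cell (the unique inner corner of $(1)$) is permitted to be the empty set, while the remaining $n$ cells, forming $(n+1)/(1)$, must each carry a nonempty set. Partitioning $SVT((n+1)/\!\!/(1))$ according to whether the first cell is empty produces two natural weight-preserving bijections: those with the first cell empty correspond to $SVT((n))$ (delete the empty cell and shift left), and those with the first cell nonempty correspond to $SVT((n+1))$. Summing the monomials $x^T$ over the two classes gives exactly $\tilde G_{(n)} + \tilde G_{(n+1)}$.

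The only real obstacle is guessing the correct skew shape; this is suggested by the single-variable calculation $\tilde G_{(n+1)/\!\!/(1)}(x) = (1+x)\,x^n = x^n + x^{n+1}$ coming from formula \eqref{single}. After that, the identity becomes a one-line bookkeeping of sets in a single row, echoing the same combinatorial observation that underlies Lemma~\ref{lhb}.
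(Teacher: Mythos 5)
Your proof is correct and follows essentially the same route as the paper, which also deduces the chain from $G_{(n)}(\varphi) - G_{(n+1)}(\varphi) = G_{(n+1)/\!\!/(1)}(\varphi) \ge 0$ together with $G_{(0)} = G_{\varnothing} = 1$ and $\overline{G}$-positivity on skew shapes. Your tableau bijection just supplies an explicit verification of the identity $G_{(n+1)/\!\!/(1)} = G_{(n)} - G_{(n+1)}$, which the paper asserts without proof, and your sign bookkeeping through $G_{\lambda/\!\!/\mu}(\mathbf{x}) = (-1)^{|\lambda/\mu|}\tilde G_{\lambda/\!\!/\mu}(-\mathbf{x})$ is accurate.
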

\begin{proof}
We have for all $n \ge 0$
$$
{G}_{(n)}(\varphi) - {G}_{(n+1)}(\varphi) = {G}_{(n+1)/\!\!/(1)}(\varphi) \ge 0. 
$$
Note also that ${G}_{(0)}(\varphi) = {G}_{\varnothing}(\varphi) = 1$.
\end{proof}

\begin{theorem}
Let $\varphi$ be a $\overline{G}$-positive homomorphism of $\Gamma$ and suppose ${G}_{(1)}(\varphi) \ne 1$. Then  
$$
\rho : h_n \longmapsto \frac{{G}_{(n)}(\varphi) - {G}_{(n+1)}(\varphi) }{1 - {G}_{(1)}(\varphi) }
$$
defines a Schur-positive specialization of $\Lambda$ with nonnegative parameters $\{\alpha_{n}\}$, $\{\beta_n \}$, $\gamma$ such that 
\begin{equation}\label{gcond}
{G}_{(1)}(\varphi) = 1 - e^{-\gamma}\prod_{n =1}^{\infty} \frac{1 - \alpha_n}{1 + \beta_n} \in [0,1)
\end{equation}
\end{theorem}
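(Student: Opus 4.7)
The approach is to adapt the proof of Theorem \ref{t1} via the substitution $\mathbf{x} \mapsto -\mathbf{x}$. Set $H'_0 := 1 - G_{(1)}$ and $H'_n := G_{(n)} - G_{(n+1)}$ for $n \ge 1$; the proof of Lemma \ref{gine} already shows $H'_n = G_{(n+1)/\!\!/(1)}$, so each $H'_n$ is $\overline{G}$-positive. Substituting $\mathbf{x} \mapsto -\mathbf{x}$ in Lemma \ref{lhb} (using $\tilde G_\lambda(-\mathbf{x}) = (-1)^{|\lambda|} G_\lambda$) yields the multiplicative identity $H'_n = H'_0 \cdot h_n$, equivalently
\[
\sum_{n \ge 0} H'_n\, u^n \;=\; \prod_{n \ge 1} \frac{1 - x_n}{1 - u x_n}.
\]
Define $F'_\mu := \det[H'_{\mu_i - i + j}]_{1 \le i, j \le \ell(\mu)} \in \Gamma$; the Jacobi--Trudi identity gives $F'_\mu = (H'_0)^{\ell(\mu)} s_\mu$ in $\hat\Lambda$, the direct analog of the intermediate step in the proof of Theorem \ref{fgplus}.

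The crux is to show $F'_\mu(\varphi) \ge 0$ for every $\overline{G}$-positive $\varphi$. Substituting $\mathbf{x} \mapsto -\mathbf{x}$ in the second identity of \eqref{gschur} yields the \emph{positive} Schur-to-$G$ expansion $s_\mu = \sum_{\nu \supset \mu} \mathrm{f}_{\nu/\mu}\, G_\nu$ in $\hat\Lambda$, so $F'_\mu = \sum_{\nu \supset \mu}\mathrm{f}_{\nu/\mu}(H'_0)^{\ell}G_\nu$ formally in $\hat\Lambda$. The truncations $B_N := \sum_{|\nu| \le N}\mathrm{f}_{\nu/\mu}(H'_0)^{\ell}G_\nu \in \Gamma$ satisfy $\varphi(B_N) = (H'_0(\varphi))^{\ell} \sum_{|\nu| \le N}\mathrm{f}_{\nu/\mu}\varphi(G_\nu) \ge 0$, nondecreasing in $N$ and bounded above by $(H'_0(\varphi))^\ell \cdot \prod_{i} \rho(h_{\mu_i})$ via the Schur-nonnegative majorization $s_\mu \le h_{\mu_1}\cdots h_{\mu_{\ell}}$ combined with the definition $\rho(h_n) := H'_n(\varphi)/H'_0(\varphi)$. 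Since $F'_\mu - B_N \in \Gamma$ has its $G$-basis expansion supported on $|\lambda| > N$ (because both agree on monomials of degree $\le N$ in $\hat\Lambda$, and $G_\lambda = s_\lambda + \text{higher-degree terms}$), the tail $\varphi(F'_\mu - B_N)$ vanishes in the limit, forcing $\varphi(F'_\mu) = \lim_N \varphi(B_N) \ge 0$. The main obstacle lies exactly here: the $G$-expansion of $F'_\mu - B_N$ carries signs (since $(H'_0)^\ell G_\nu$ itself does), so the tail bound cannot be obtained termwise in $\{G_\lambda\}$ and must instead be extracted from the absolute convergence of the nonnegative underlying series together with the $\hat\Lambda$-stabilization of low-degree monomials.

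With $F'_\mu(\varphi) \ge 0$ in hand, the Toeplitz matrix $[\rho(h_{i - j})]_{i, j \ge 0}$ has principal minors equal to $1$ by upper-triangularity and consecutive-column minors equal to $F'_\mu(\varphi)/H'_0(\varphi)^{\ell} \ge 0$, hence is totally nonnegative by the criterion of \cite[Thm.~3.2]{gp} invoked in the proof of Theorem \ref{toep}. Edrei--Thoma (Theorem \ref{ets}) then supplies the parametrization $\rho(H(z)) = e^{\gamma z}\prod(1 + \beta_n z)/(1 - \alpha_n z)$ with nonnegative $\{\alpha_n\}, \{\beta_n\}, \gamma$. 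To derive \eqref{gcond}, evaluate at $z = 1$: the telescoping identity $\sum_{n \le N}H'_n(\varphi) = 1 - G_{(N+1)}(\varphi)$ together with $G_{(n)}(\varphi) \to 0$ (obtained by applying the approximation argument of the previous paragraph with $\mu = \varnothing$) gives $\rho(H(1)) = 1/(1 - G_{(1)}(\varphi)) = e^{\gamma}\prod(1 + \beta_n)/(1 - \alpha_n)$, which rearranges to $1 - G_{(1)}(\varphi) = e^{-\gamma}\prod(1-\alpha_n)/(1+\beta_n)$.
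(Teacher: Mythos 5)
Your setup for the Toeplitz/Edrei--Thoma route is correct as far as it goes: indeed $H'_n=H'_0\cdot h_n$, $F'_\mu=(H'_0)^{\ell}s_\mu$, the entries $H'_n=G_{(n+1)/\!\!/(1)}$ are $\overline{G}$-positive, and, granted $F'_\mu(\varphi)\ge 0$, the Gasca--Pe\~na criterion and Theorem~\ref{ets} would finish as in Theorem~\ref{toep}. But the proof has a genuine gap exactly where you flag ``the main obstacle'': you never prove $F'_\mu(\varphi)\ge 0$. The analogue of Theorem~\ref{fgplus} truly fails in the alternating-sign basis: substituting $\mathbf{x}\mapsto-\mathbf{x}$ into $F_\mu=\sum_\nu \mathrm{d}_{\nu/\mu}\,\tilde G_\nu$ gives $F'_\mu=\sum_\nu(-1)^{|\nu/\mu|}\mathrm{d}_{\nu/\mu}\,G_\nu$, so $F'_\mu$ does not lie in the nonnegative cone of $\{G_\nu\}$ and no termwise argument is available. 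Your substitute --- that $\varphi(F'_\mu-B_N)\to 0$ because the $G$-support of $F'_\mu-B_N$ sits in degrees $>N$ --- is precisely the unproved analytic content: $\varphi$ is only a homomorphism of $\Gamma$, carries no continuity with respect to the degree filtration, and a finite $G$-combination supported in high degrees can have large coefficients of mixed sign; ``absolute convergence of the nonnegative underlying series'' is not an argument, since that series lives in $\hat\Lambda$, where $\varphi$ is not defined. (Your upper bound for $\varphi(B_N)$ by $\prod_i\rho(h_{\mu_i})$ has the same issue: it silently applies $\varphi$ term by term to an infinite $G$-expansion.) A smaller problem of the same kind appears at the end: $G_{(n)}(\varphi)\to 0$ does not follow from ``the approximation argument with $\mu=\varnothing$'' (which only gives $F'_\varnothing=1$); one needs, e.g., to evaluate $\rho(H(1))$ through $H(1)=\sum_{k\ge 0}(G_{(1)})^k$, or a geometric bound such as $G_{(n+1)}(\varphi)\le G_{(1)}(\varphi)\,G_{(n)}(\varphi)$.

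It is worth comparing with the paper's proof, which avoids the determinantal detour entirely, and for a structural reason: in the alternating-sign setting the Schur-to-$G$ transition is already positive, $s_\lambda=\sum_{\mu\supset\lambda}\mathrm{f}_{\mu/\lambda}\,G_\mu$ (this is \eqref{sog}, the $-\mathbf{x}$ image of \eqref{gschur}). One therefore defines $\rho(s_\lambda):=\sum_{\mu}\mathrm{f}_{\mu/\lambda}\,G_\mu(\varphi)$, which is manifestly nonnegative, and the only issue is convergence; this is handled by dominating $\mathrm{f}_{\mu/\lambda}$ by the nonnegative $G$-expansion coefficients of $(h_1)^n=\bigl((G_{(1)}-G_{(2)})/(1-G_{(1)})\bigr)^n$, whose value under $\varphi$ is finite precisely because $G_{(1)}(\varphi)<1$; the constraint \eqref{gcond} then comes from computing $\rho(H(1))$ in two ways. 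The $F_\mu$/total-nonnegativity machinery is what the paper needs for $\tilde G$-positivity (Theorems~\ref{t1}, \ref{fgplus}, \ref{toep}), where the Schur-to-$\tilde G$ transition has signs; importing it here is both unnecessary and, as written, not carried through at its key step.
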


\begin{proof}
Suppose $\varphi$ is a $\overline{G}$-positive homomorphism of $\Gamma$ with ${G}_{(1)}(\varphi) \ne 1$. By Lemma~\ref{gine} we get 
\begin{equation}\label{gdec}
1 > {G}_{(1)}(\varphi) \ge {G}_{(2)}(\varphi) \ge \ldots \ge 0
\end{equation}
Recall that we have the following infinite positive expansion of $s_{\lambda}$ in the `basis' $\{{G}_{\mu} \}$ which follows from \eqref{gschur} (see \cite{lenart}):
\begin{equation}\label{sog}
s_{\lambda} = \sum_{\mu \supset \lambda} \mathrm{f}_{\mu/\lambda}\, {G}_{\mu},
\end{equation}
where $\mathrm{f}_{\mu/\lambda}$ is the number of elegant tableaux of shape $\mu/\lambda$ (see Def.~\ref{eleg}). Therefore, if for all $\lambda$
$$
0 \le \sum_{\mu \supset \lambda} \mathrm{f}_{\mu/\lambda}\, {G}_{\mu}(\varphi) < \infty
$$
then we can apply the homomorphism $\varphi$ to define $\rho = \rho(\varphi) : \Lambda \to \mathbb{R}$ by letting
\begin{equation}\label{rofi}
\rho(s_{\lambda}) := \sum_{\mu \supset \lambda} \mathrm{f}_{\mu/\lambda}\, {G}_{\mu}(\varphi)
\end{equation}
which becomes a well-defined Schur-positive specialization. 
Let us consider the identity (follows by Lemma~\ref{lhb})
$$
h_{1} = \frac{{G}_{(1)} - {G}_{(2)}}{1 - {G}_{(1)}} = \sum_{\mu} \mathrm{f}_{\mu/(1)}\, {G}_{\mu}
$$
and the expansion
\begin{equation}\label{ham}
(h_{1})^n = \left(\frac{{G}_{(1)} - {G}_{(2)}}{1 - {G}_{(1)}} \right)^n = \sum_{\mu} a_{\mu}\, {G}_{\mu}, \quad a_{\mu} \ge 0.
\end{equation}
Since the function $(h_{1})^n - s_{\lambda}$ is Schur-positive, we must have $a_{\mu} \ge \mathrm{f}_{\mu/\lambda}$ for all $\mu$. Applying the homomorphism $\varphi$ to \eqref{ham} we obtain that the series
$$
\sum_{\mu} a_{\mu}\, {G}_{\mu}(\varphi) = \left(\frac{{G}_{(1)}(\varphi) - {G}_{(2)}(\varphi)}{1 - {G}_{(1)}(\varphi)} \right)^n
$$
converges since ${G}_{(1)}(\varphi) < 1$. Therefore,
$$
0 \le \sum_{\mu \supset \lambda} \mathrm{f}_{\mu/\lambda}\, {G}_{\mu}(\varphi) \le \sum_{\mu} a_{\mu}\, {G}_{\mu}(\varphi) < \infty
$$
as needed. So we have defined a Schur-positive specialization given by \eqref{rofi}. Let us check its values on the generators $h_n$:
\begin{equation}\label{hnn}
\rho(h_n) = \sum_{\mu \supset (n)} \mathrm{f}_{\mu/(n)}\, {G}_{\mu}(\varphi) = \frac{{G}_{(n)}(\varphi) - {G}_{(n+1)}(\varphi)}{1 - {G}_{1}(\varphi)}
\end{equation}
where the last identity makes sense as it is an identity for $\{G_{\mu}\}$ (see Lemma~\ref{lhb}) and ${G}_{1}(\varphi) < 1$. 

Let us now show that the series 
$$
\rho(H(1)) = 1 + \sum_{n = 1}^{\infty} \rho(h_n) 
$$
converges. By \eqref{gdec} there exists $\lim_{N \to \infty} G_{(N)}(\varphi) = A \in [0,1]$. Using this and \eqref{hnn} we have 
$$
\rho(H(1)) = \lim_{N \to \infty} \left(1 + \sum_{n = 1}^{N} \rho(h_n)\right) = \lim_{N \to \infty} \frac{1 - {G}_{(N+1)}(\varphi)}{1 - {G}_{1}(\varphi)} = \frac{1 - A}{1 - {G}_{1}(\varphi)}
$$
On the other hand, using the identity
$$
H(1) = \frac{1}{1 - {G}_{(1)}} = \sum_{k = 0}^{\infty} ({G}_{(1)})^k
$$
we obtain  
$$
\rho(H(1)) = \frac{1}{1 - {G}_{(1)}(\varphi)}.
$$
Furthermore, since $\rho$ is Schur-positive, it is parametrized by some nonnegative reals $\{\alpha_n\}$, $\{\beta_n \}$, $\gamma$, and we have
$$
1 \le \rho(H(1)) = \frac{1}{1 - {G}_{(1)}(\varphi)} = e^{\gamma} \prod_{n =1}^{\infty} \frac{1 + \beta_n}{1 - \alpha_n}  < \infty
$$ 
or 
$$
{G}_{(1)}(\varphi) = 1 - e^{-\gamma}\prod_{n =1}^{\infty} \frac{1 - \alpha_n}{1 + \beta_n} \in [0,1)
$$
as needed.
\end{proof}


\subsection{Structure of $\overline{G}$-positive specializations} 

\begin{lemma}\label{lll}
Let $\alpha, \beta, \gamma \in \mathbb{R}_{\ge 0}$ and $\alpha \le 1$. Then the Schur-positive generators of $\Lambda$ extend to $\overline{G}$-positive specializations $\hat\phi_{\alpha},$ $\hat\varepsilon_{\beta}$, $\hat\pi_{\gamma}$ of $\Gamma$. 
\end{lemma}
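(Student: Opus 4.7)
The plan is to mirror the proof of Lemma~\ref{sext}, handling the three generators separately. The structural reason that $\phi$ now requires the bounded parameter (rather than $\varepsilon$, as for $G$-positivity) is a duality under the sign-flip $\mathbf{x}\mapsto-\mathbf{x}$ between the $\tilde G$- and $G$-bases, which I exploit directly for $\hat\phi_\alpha$ and through $\omega$ for $\hat\varepsilon_\beta$.

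For $\hat\phi_\alpha$ with $\alpha\in[0,1]$, I recognize this as the single-variable substitution $x_1=\alpha$. The single-variable formula \eqref{single} together with ${G}_{\lambda/\!\!/\mu}(\mathbf{x})=(-1)^{|\lambda/\mu|}\tilde G_{\lambda/\!\!/\mu}(-\mathbf{x})$ gives
\[
{G}_{\lambda/\!\!/\mu}(\alpha) \;=\; (1-\alpha)^{a(\lambda/\!\!/\mu)}\,\alpha^{|\lambda/\mu|}
\]
when $\lambda/\mu$ is a horizontal strip (and $0$ otherwise), which is manifestly nonnegative precisely for $\alpha\in[0,1]$.

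For $\hat\varepsilon_\beta$ with $\beta\ge 0$, I first derive the companion involution identity
\[
\omega\bigl({G}_{\lambda/\!\!/\mu}\bigr)(\mathbf{x}) \;=\; {G}_{\lambda'/\!\!/\mu'}\!\bigl(\tfrac{\mathbf{x}}{1+\mathbf{x}}\bigr),
\]
which is analogous to the known $\omega(\tilde G_{\lambda/\!\!/\mu})(\mathbf{x})=\tilde G_{\lambda'/\!\!/\mu'}(\mathbf{x}/(1-\mathbf{x}))$. This follows by combining the latter with ${G}=(-1)^{|\cdot|}\tilde G(-\mathbf{x})$ and the commutativity of $\omega$ with the variable sign-flip on $\hat\Lambda$ (both act as $-1$ on each $p_n$ composed together). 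Since $\varepsilon_\beta=\phi_\beta\circ\omega$, applying Paragraph~2 to the conjugate skew shape yields
\[
{G}_{\lambda/\!\!/\mu}(\hat\varepsilon_\beta) \;=\; {G}_{\lambda'/\!\!/\mu'}\!\bigl(\tfrac{\beta}{1+\beta}\bigr) \;=\; \bigl(\tfrac{1}{1+\beta}\bigr)^{a(\lambda'/\!\!/\mu')}\bigl(\tfrac{\beta}{1+\beta}\bigr)^{|\lambda/\mu|}
\]
when $\lambda/\mu$ is a vertical strip (and $0$ otherwise), which is nonnegative for every $\beta\ge 0$ because $\beta/(1+\beta)\in[0,1)$. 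Note in particular that no upper bound on $\beta$ is required—the specialization is well-defined through this $\omega$-route rather than via the divergent series in $\{\tilde G_\mu\}$.

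For $\hat\pi_\gamma$ with $\gamma\ge 0$, I use the approximation $\hat\pi_\gamma=\lim_{n\to\infty}\hat\varepsilon_{\gamma/n}^{\cup n}$, reflecting the generating-function limit $(1+\gamma z/n)^n\to e^{\gamma z}$. Each finite union is $\overline{G}$-positive by Paragraph~3 combined with the union lemma. Pointwise convergence ${G}_{\lambda/\!\!/\mu}(\hat\varepsilon_{\gamma/n}^{\cup n})\to{G}_{\lambda/\!\!/\mu}(\hat\pi_\gamma)$ follows from the limit on generators $\rho_n(h_k)=\binom{n}{k}(\gamma/n)^k\to\gamma^k/k!$ together with the Schur expansion of ${G}_{\lambda/\!\!/\mu}$; convergence and termwise bound on the alternating sum come from the Grothendieck-positive majorant $(\tilde G_{(1)})^{|\lambda|}-\tilde G_\lambda\in\Gamma_+$ exactly as in the Plancherel case of Lemma~\ref{sext}. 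Passing to the limit preserves nonnegativity. The main obstacle is precisely this last step: controlling the alternating series for ${G}_{\lambda/\!\!/\mu}(\hat\pi_\gamma)$—which is not absolutely convergent a priori—so that exchange of limit and sum is justified; aside from this, the rest is direct calculation using the two involution identities.
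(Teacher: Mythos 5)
Your proposal follows the paper's proof almost verbatim: the single-variable formula for $\hat\phi_\alpha$, the involution identity $\omega({G}_{\lambda/\!\!/\mu}(\mathbf{x}))={G}_{\lambda'/\!\!/\mu'}(x_1/(1+x_1),\ldots)$ reducing $\hat\varepsilon_\beta$ to $\hat\phi_{\beta/(1+\beta)}$ (the paper cites this identity from [Yel17] rather than rederiving it, but your derivation from the $\tilde G$-version via the sign flip is fine), and a limit of finite unions for $\hat\pi_\gamma$ --- the paper uses $N$ copies of $\hat\phi_{\gamma/N}$ (valid once $\gamma/N\le 1$) where you use $n$ copies of $\hat\varepsilon_{\gamma/n}$, an immaterial difference. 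The one step you leave open, and call the main obstacle, is not a genuine one, and it is worth seeing how the paper closes it in one line: it writes ${G}_{\lambda/\!\!/\mu}(\hat\pi_{\gamma})=(-1)^{|\lambda/\mu|}\,\tilde G_{\lambda/\!\!/\mu}(\hat\pi_{-\gamma})$ and invokes the convergence already proved in Lemma~\ref{sext}. Concretely, since $s_{\mu}(-\mathbf{x})=(-1)^{|\mu|}s_{\mu'}(\mathbf{x})$ and $\pi_\gamma(s_{\mu'})=\pi_\gamma(s_{\mu})$ (because $f^{\mu'}=f^{\mu}$), the absolute values of the terms of your alternating Schur series for ${G}_{\lambda/\!\!/\mu}(\hat\pi_\gamma)$ are exactly the terms of the nonnegative series for $\tilde G_{\lambda/\!\!/\mu}(\hat\pi_\gamma)$, whose convergence is what the majorant $(\tilde G_{(1)})^{|\lambda|}-\tilde G_{\lambda}\in\Gamma_+$ (together with the finite expansion $\tilde G_{\lambda/\!\!/\mu}=\sum_\nu d^{\lambda}_{\mu\nu}\tilde G_{\nu}$) already gives in Lemma~\ref{sext}. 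So your series is absolutely convergent --- indeed an entire power series in $\gamma$ --- ``a priori,'' and with that domination in hand the passage to the limit of finite unions is the same routine step the paper performs; supplying this observation completes your argument and makes it coincide with the paper's.
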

\begin{proof}
First, using the single variable substitution \eqref{single} we have 
$${G}_{\lambda/\!\!/\mu}(\hat\phi_{\alpha}) = \alpha^{|\lambda/\mu|}(1 - \alpha)^{a(\lambda/\!\!/\mu)} \ge 0.$$  

Next, for the specialization $\hat\varepsilon_{\beta}$ we have
$${G}_{\lambda/\!\!/\mu}(\hat\varepsilon_{\beta}) = {G}_{\lambda'/\!\!/\mu'}(\hat\phi_{\beta'}) = \beta^{|\lambda'/\mu'|} / (1 + \beta)^{|\lambda'/\mu'| + a(\lambda'/\!\!/\mu')} \ge 0, \text{ where } \beta' = \beta/(1 + \beta).$$ 
Here we used the formula  (see \cite{dy})
$$\omega({G}_{\lambda/\!\!/\mu}(x_1, x_2, \ldots)) = {G}_{\lambda'/\!\!/\mu'}(x_1/(1 + x_1), x_2/(1 + x_2), \ldots).$$ 

For the Plancherel specialization $\hat\pi_{\gamma},$ let us first show that the power series ${G}_{\lambda/\!\!/\mu}(\hat\pi_{\gamma}) \in \mathbb{R}[[\gamma]]$ converges for all $\gamma \in \mathbb{R}$. From the definition of $G$ functions we have $${G}_{\lambda/\!\!/\mu}(\hat\pi_{\gamma}) = (-1)^{|\lambda/\mu|} \tilde {G}_{\lambda/\!\!/\mu}(\hat\pi_{- \gamma})$$ and since $\tilde {G}_{\lambda/\!\!/\mu}(\hat\pi_{- \gamma}) \in \mathbb{R}[[\gamma]]$ converges for all $\gamma \in \mathbb{R}$ (see proof of Lemma~\ref{sext}), the same holds for ${G}_{\lambda/\!\!/\mu}(\hat\pi_{\gamma})$. To see nonnegativity, notice that $\pi_{\gamma}$ can be realized as follows: $${G}_{\lambda/\!\!/\mu}(\hat\pi_{\gamma}) = \lim_{N \to \infty} {G}_{\lambda/\!\!/\mu}(\underbrace{\gamma/N, \ldots, \gamma/N}_{N \text{ times}}) = \lim_{N \to \infty} {G}_{\lambda/\!\!/\mu}(\underbrace{\hat\phi_{\gamma/N}, \ldots, \hat\phi_{\gamma/N}}_{N \text{ times}})$$
for $\gamma/N \le 1$ we have $\hat\phi_{\gamma/N}$ is $\overline{G}$-positive and so the union $(\hat\phi_{\gamma/N}, \ldots, \hat\phi_{\gamma/N})$ is $\overline{G}$-positive as well. Hence ${G}_{\lambda/\!\!/\mu}(\hat\pi_{\gamma}) \ge 0$ as desired.
\end{proof}

\begin{proposition}
Let $\rho$ be a Schur-positive specialization of $\Lambda$ with nonnegative real parameters 
$\{\alpha_n < 1\}$, $\{\beta_n \}$, $\gamma$ such that
\begin{equation}\label{egamma}
\delta := 1 - e^{-\gamma}\prod_{n =1}^{\infty} \frac{1 - \alpha_n}{1 + \beta_n} \in [0,1)
\end{equation}
Then it extends to a $\overline{G}$-positive homomorphism $\hat\rho$ of $\Gamma$ that is a union of Schur-positive generators $\hat\pi_{\gamma},$ $(\hat\phi_{\alpha_n}),$ $(\hat\varepsilon_{\beta_n})$. 
In particular, ${G}_{(1)}(\hat\rho) = \delta$
and the generating function for the elements $\{ {G}_{(n)}(\hat\rho)\}$ is the following
\begin{equation}\label{eeq3}
1 + (z - 1) \sum_{n = 1}^{\infty} {G}^{}_{(n)}(\hat\rho) z^{n - 1} = e^{\gamma (z - 1)} \prod_{n = 1}^{\infty} \frac{1 - \alpha_n}{1 - \alpha_n z} \prod_{n = 1}^{\infty} \frac{1 + \beta_n z }{1 + \beta_n}.
\end{equation}
\end{proposition}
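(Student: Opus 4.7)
The plan is to realize $\hat\rho$ as the monotone limit of its finite truncations and then assemble the generating function \eqref{eeq3} from a multiplicative factorization over unions.

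First, by Lemma~\ref{lll}, each of $\hat\phi_{\alpha_n}$, $\hat\varepsilon_{\beta_n}$, and $\hat\pi_\gamma$ is $\overline{G}$-positive. For each $N$, the finite union $\hat\rho_N := \hat\pi_\gamma \cup \hat\phi_{\alpha_1} \cup \cdots \cup \hat\phi_{\alpha_N} \cup \hat\varepsilon_{\beta_1} \cup \cdots \cup \hat\varepsilon_{\beta_N}$ is then $\overline{G}$-positive by the preceding lemma on unions of $\overline{G}$-positive specializations, and extends the Schur-positive truncation $\rho_N$ of $\rho$. I would then show $G_\lambda(\hat\rho_N)$ is monotone increasing in $N$: from the branching formula $G_\lambda(\hat\rho_{N+1}) = \sum_\nu G_{\lambda/\!\!/\nu}(\hat\rho_N)\, G_\nu(\text{new generator})$, the term $\nu = \varnothing$ recovers $G_\lambda(\hat\rho_N)$, while the remaining terms are nonnegative by $\overline{G}$-positivity of both factors. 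For a uniform upper bound, the Schur expansion \eqref{sog} with $\mathrm{f}_{\mu/\mu} = 1$ yields
$$G_\mu(\hat\rho_N) \le \sum_{\nu \supset \mu} \mathrm{f}_{\nu/\mu}\, G_\nu(\hat\rho_N) = \rho_N(s_\mu) \le \rho(s_\mu) < \infty,$$
where the last inequality follows from Schur-positivity of $\rho$ (adding positive generators only increases $s$-values). Hence $G_\mu(\hat\rho_N) \nearrow G_\mu(\hat\rho)$ to a finite nonnegative limit, and the resulting map $\hat\rho$ is a $\overline{G}$-positive homomorphism, since products $G_\mu \cdot G_\nu$ expand as finite sums in the $G$-basis, so multiplicativity passes to the limit. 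Moreover, $\hat\rho(s_\lambda) = \lim_N \rho_N(s_\lambda) = \rho(s_\lambda)$ by monotone convergence, confirming that $\hat\rho$ extends $\rho$.

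Second, to identify the generating function, I would establish multiplicativity of $F(z; \varphi) := 1 + (z-1)\sum_{n \ge 1} G_{(n)}(\varphi) z^{n-1}$ under unions. Using the branching formula for $(n)/\!\!/(k)$ together with the single-variable evaluations $G_{(n)/\!\!/(0)}(x) = x^n$ and $G_{(n)/\!\!/(k)}(x) = (1-x)\,x^{n-k}$ for $1 \le k \le n$ (both derived from \eqref{single} via $G_\lambda(x) = (-1)^{|\lambda|}\tilde G_\lambda(-x)$), a direct summation gives $F(z; \varphi_1 \cup \varphi_2) = F(z; \varphi_1)\,F(z; \varphi_2)$. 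Computing $F$ on each generator: $F(z; \hat\phi_\alpha) = (1-\alpha)/(1-\alpha z)$ from $G_{(n)}(\hat\phi_\alpha) = \alpha^n$; $F(z; \hat\varepsilon_\beta) = (1+\beta z)/(1+\beta)$ from $G_{(1)}(\hat\varepsilon_\beta) = \beta/(1+\beta)$ and $G_{(n)}(\hat\varepsilon_\beta) = 0$ for $n \ge 2$ (the latter follows from $G_{(n)}(\hat\varepsilon_\beta) = G_{(1^n)}(\hat\phi_{\beta/(1+\beta)})$ in the proof of Lemma~\ref{lll} and the fact that $(1^n)$ is not a horizontal strip for $n \ge 2$); and $F(z; \hat\pi_\gamma) = e^{\gamma(z-1)}$, obtained by realizing $\hat\pi_\gamma$ as the limit of $N$ copies of $\hat\phi_{\gamma/N}$ and passing $\left((1-\gamma/N)/(1-(\gamma/N)z)\right)^N \to e^{\gamma(z-1)}$.

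Finally, combining multiplicativity with convergence of the infinite products (guaranteed by $\sum_n(\alpha_n + \beta_n) < \infty$) yields the generating function \eqref{eeq3} for $\hat\rho$. Evaluating at $z = 0$ gives $1 - G_{(1)}(\hat\rho) = e^{-\gamma}\prod_n (1-\alpha_n)/(1+\beta_n) = 1 - \delta$, so $G_{(1)}(\hat\rho) = \delta$ as claimed. The main obstacle is the first step, specifically justifying the uniform bound $G_\mu(\hat\rho_N) \le \rho(s_\mu)$; this rests on the fact that $\hat\rho_N$ extends $\rho_N$ (inherited from extensions of individual Schur-positive generators via the compatibility lemma for unions) together with the nonnegativity of the elegant tableau expansion \eqref{sog}, which turns the Schur-positivity of $\rho$ into termwise control of the $G$-basis values.
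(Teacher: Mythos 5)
Your proposal is correct and follows essentially the same route as the paper: factor $\rho$ into Schur-positive generators via Theorem~\ref{etfact}, extend each to a $\overline{G}$-positive specialization by Lemma~\ref{lll}, take their union, and read off ${G}_{(1)}(\hat\rho)=\delta$ and \eqref{eeq3} from the generating function for $\{G_{(n)}\}$, i.e.\ the sign-twisted form of Lemma~\ref{lhb}. Your finite-truncation/monotone-limit argument with the bound via \eqref{sog}, and the per-generator computation of the series $F(z;\cdot)$, just make explicit the convergence and multiplicativity details that the paper handles by citing the union lemma and Lemma~\ref{lhb}.
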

\begin{proof}
By the factorization form in Theorem~\ref{etfact}, $\rho$ as a union of Schur-positive generators $\pi_{\gamma}$, $(\phi_{\alpha_n})$, $(\varepsilon_{\beta_n})$ (see subsec.~\ref{sgen}). Then these Schur-positive generators extend to $\overline{G}$-positive specializations by Lemma~\ref{lll}. 
Define $\varphi$ as the union of positive generators $\pi_{\gamma}, (\hat\phi_{\alpha_n}), (\varepsilon_{\beta_n})$. Then the formulas \eqref{egamma}, \eqref{eeq3} follow by Schur expansions of $\{G_{{n}}\}$, see Lemma~\ref{lhb}. 
\end{proof}

\subsection{The normalized case ${G}_{(1)}(\varphi) = 1$} Recall that $G_{(1)} = 1 - \prod_{n}(1 - x_n)$ and hence a normalized homomorphism of $\Gamma$ maps $\prod_{n}(1 - x_n)$ to $0$.

\begin{lemma}\label{buchl}
We have ${G}_{\lambda/\!\!/\mu}(1, \mathbf{x}) = {G}_{\widetilde{\lambda}/\!\!/\mu}(\mathbf{x})$, where $\widetilde{\lambda} = (\lambda_2, \lambda_3, \ldots)$, i.e. the first row in $\lambda$ is removed.
\end{lemma}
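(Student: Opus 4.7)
My plan is to derive the identity as a consequence of the branching formula plus the single-variable formula from Proposition~\ref{prop1}, turning the lemma into a statement about which intermediate partition survives.

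First, I would observe that the branching formula transfers verbatim from $\tilde G$ to $G$. From $G_{\lambda/\!\!/\mu}(\mathbf{x}) = (-1)^{|\lambda/\mu|}\tilde G_{\lambda/\!\!/\mu}(-\mathbf{x})$ and \eqref{br}, a direct substitution gives
\[
G_{\lambda/\!\!/\mu}(\mathbf{x},\mathbf{y}) = \sum_{\nu} G_{\lambda/\!\!/\nu}(\mathbf{x})\, G_{\nu/\!\!/\mu}(\mathbf{y}),
\]
and I specialize $\mathbf{x}$ to the single variable $x=1$, so the left-hand side becomes $G_{\lambda/\!\!/\mu}(1,\mathbf{y})$.

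Next I evaluate $G_{\lambda/\!\!/\nu}(1)$ using \eqref{single}. Writing $G_{\lambda/\!\!/\nu}(x) = (1-x)^{a(\lambda/\!\!/\nu)} x^{|\lambda/\nu|}$ when $\lambda/\nu$ is a horizontal strip (and $0$ otherwise), I set $x=1$ to get
\[
G_{\lambda/\!\!/\nu}(1) = \begin{cases} 1, & \text{if $\lambda/\nu$ is a horizontal strip and } a(\lambda/\!\!/\nu)=0, \\ 0, & \text{otherwise.}\end{cases}
\]

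The heart of the proof is then to show that the only $\nu$ with $\mu\subset\nu\subset\lambda$ satisfying both conditions is $\nu = \widetilde{\lambda}$. I would first verify existence: $\lambda/\widetilde{\lambda}$ is a horizontal strip because the columns containing a box of $\lambda/\widetilde{\lambda}$ in row $i$ lie in $(\lambda_{i+1},\lambda_i]$, and these intervals are pairwise disjoint over $i$ (any overlap forces a contradiction in the chain $\lambda_i\ge\lambda_{i+1}$). For $a(\lambda/\!\!/\widetilde{\lambda})=0$, any inner corner of $\widetilde{\lambda}$ sits at $(i,\lambda_{i+1})$ with $\lambda_{i+1}>\lambda_{i+2}$, and the box $(i+1,\lambda_{i+1})$ then lies in $\lambda/\widetilde{\lambda}$, supplying the needed column.

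For uniqueness I argue row by row. If $\nu_1>\lambda_2$, then $(1,\nu_1)$ is an inner corner of $\nu$, yet no row $r\ge 2$ of $\lambda/\nu$ reaches column $\nu_1$ (since $\lambda_r\le\lambda_2<\nu_1$), contradicting $a=0$. If $\nu_1<\lambda_2$, then both $(1,\nu_1+1)$ and $(2,\nu_1+1)$ lie in $\lambda/\nu$, violating the horizontal strip condition. Hence $\nu_1=\lambda_2$. The identical argument, applied to row $i$ (using $\nu_{i-1}=\lambda_i$ from the inductive step to rule out $r<i$), forces $\nu_i=\lambda_{i+1}$ for every $i$, so $\nu=\widetilde{\lambda}$. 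Putting everything together collapses the branching sum to the single term $G_{\widetilde{\lambda}/\!\!/\mu}(\mathbf{y})$.

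The main obstacle is really Step~3, the case analysis establishing uniqueness; everything else is a bookkeeping consequence of \eqref{br} and \eqref{single}. Edge cases such as $\mu\not\subset\widetilde{\lambda}$ require no extra work since both sides vanish ($G_{\widetilde{\lambda}/\!\!/\mu}=0$ by convention, and the branching sum has no contributing $\nu$).
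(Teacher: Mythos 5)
Your proof is correct and takes essentially the same route as the paper: specialize the branching formula \eqref{br} (which carries over verbatim to the $G$-functions) at a single variable $x=1$ and use \eqref{single} to see that the only surviving intermediate shape is $\nu=\widetilde{\lambda}$. The paper dismisses this last combinatorial identification as ``not hard to see''; your case analysis simply supplies those details, and it is sound.
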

\begin{proof}
By the branching formula \eqref{br} we have ${G}_{\lambda/\!\!/\mu}(1, \mathbf{x}) = \sum_{\nu} {G}_{\lambda/\!\!/\nu}(1)\, {G}_{\nu/\!\!/\mu}(\mathbf{x})$. From the single variable formula \eqref{single} it is not hard to see that we have for $\nu \ne \varnothing$
$$
{G}_{\lambda/\!\!/\nu}(1) = 
\begin{cases}
	1, & \text{ if } \nu = \widetilde\lambda,\\
	0, & \text{ otherwise}
\end{cases}
$$
which implies the needed.
\end{proof}

\begin{theorem}
Let $\varphi : \Gamma \to \mathbb{R}$ be a homomorphism and $\varphi' = (1, \varphi)$ (where $1 = \hat\phi_{1}$). Then: 
\begin{itemize}
\item[(i)] $\varphi'$ is a normalized homomorphism of $\Gamma$. 

\item[(ii)] $\varphi$ is $\overline{G}$-positive if and only if $\varphi'$ is $\overline{G}$-positive.
\end{itemize}
\end{theorem}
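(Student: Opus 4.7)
The plan is to reduce both parts of the theorem to Lemma~\ref{buchl} (the ``absorbing row'' identity $G_{\lambda/\!\!/\mu}(1,\mathbf{x}) = G_{\widetilde\lambda/\!\!/\mu}(\mathbf{x})$) combined with the already-established $\overline{G}$-positivity of Schur-positive generators (Lemma~\ref{lll}) and the fact that unions of $\overline{G}$-positive homomorphisms are $\overline{G}$-positive.

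For (i), I would compute $\varphi'(G_{(1)})$ directly. By the definition of the union and Lemma~\ref{buchl} applied with $\lambda=(1)$, $\mu=\varnothing$,
$$G_{(1)}(\varphi') \;=\; G_{(1)}(1,\varphi) \;=\; G_{\widetilde{(1)}}(\varphi) \;=\; G_{\varnothing}(\varphi) \;=\; 1,$$
so $\varphi'$ is normalized.

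For the forward direction of (ii), Lemma~\ref{lll} taken at $\alpha=1$ gives that $1 = \hat\phi_{1}$ is a $\overline{G}$-positive specialization of $\Gamma$. Since unions of $\overline{G}$-positive homomorphisms are $\overline{G}$-positive (the union lemma proved at the top of this section), the union $\varphi' = (1,\varphi) = (\hat\phi_1,\varphi)$ is $\overline{G}$-positive whenever $\varphi$ is.

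The converse is the main substantive step. Given $\nu\supset\mu$, I want to realize the value $G_{\nu/\!\!/\mu}(\varphi)$ as a $G$-value of $\varphi'$. This forces me to find a partition $\lambda\supset\mu$ with $\widetilde\lambda = \nu$; the natural choice is to prepend a row of length $\nu_1$, that is $\lambda = (\nu_1,\nu_1,\nu_2,\nu_3,\ldots)$. One then checks $\lambda\supset\mu$: $\lambda_1 = \nu_1\ge\mu_1$ and $\lambda_{i+1} = \nu_i \ge \mu_i \ge \mu_{i+1}$ for $i\ge 1$. Lemma~\ref{buchl} now delivers
$$G_{\nu/\!\!/\mu}(\varphi) \;=\; G_{\widetilde\lambda/\!\!/\mu}(\varphi) \;=\; G_{\lambda/\!\!/\mu}(1,\varphi) \;=\; G_{\lambda/\!\!/\mu}(\varphi') \;\ge\; 0,$$
which gives $\overline{G}$-positivity of $\varphi$. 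The only mild obstacle is ensuring that this choice of $\lambda$ works uniformly in $\mu$; the inequality chain above handles this with no case analysis, and the degenerate case $\nu = \varnothing$ is tautological.
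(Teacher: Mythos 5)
Your proposal is correct and follows essentially the same route as the paper: the forward direction of (ii) via the union of the $\overline{G}$-positive specialization $\hat\phi_1$ with $\varphi$, and the converse via Lemma~\ref{buchl}, where your explicit choice $\lambda=(\nu_1,\nu_1,\nu_2,\ldots)$ just spells out the step the paper leaves implicit. For (i) you invoke Lemma~\ref{buchl} while the paper computes directly from $G_{(1)}=1-\prod_n(1-x_n)$, but these are interchangeable one-line verifications.
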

\begin{proof}
(i) $\varphi'$ is normalized since $G_{(1)}(1, \varphi') = 1 - (1 - 1) G_{(1)}(\varphi) = 1.$

(ii) If $\varphi$ is $\overline{G}$-positive, then $\varphi'$ is also $\overline{G}$-positive since it is a union of $\overline{G}$-positive specializations $\mathbf{x} \mapsto (1, 0,0, \ldots)$ and $\varphi$. 

Conversely, suppose $\varphi'$ is $\overline{G}$-positive.  
By Lemma~\ref{buchl} we obtain $0 \le {G}_{\lambda/\!\!/\mu}(\varphi') = {G}_{\lambda/\!\!/\mu}(1, \varphi) = {G}_{\widetilde{\lambda}/\!\!/\mu}(\varphi)$ implying that $\varphi$ is $\overline{G}$-positive.
\end{proof}

\begin{remark}
Lemma~\ref{buchl} is a slight extension of Buch's observation \cite{buch} for $\mu = \varnothing$.
\end{remark}

\begin{remark}
Using these properties we also obtain the following $\overline{G}$-positive specializations:
\begin{itemize}
\item[(i)] $\mathbf{x} \mapsto 1^N$ for which ${G}_{\lambda}(1^N) = 1$
if $\ell(\lambda) \le N$ and $0$ otherwise;
\item[(ii)] by letting $N \to \infty$, we get ${G}_{\lambda} \mapsto 1$; in other words, the linear map $\varphi : \Gamma \to \mathbb{R}$ given by $\varphi : {G}_{\lambda} \mapsto 1$ for all $\lambda$ is a well-defined $\overline{G}$-positive homomorphism of $\Gamma$.
\end{itemize}
This implies certain identities for dual families given in \cite{dy4} from combinatorial perspective.
\end{remark}

\section{Dual Grothendieck-positive specializations}\label{dgs}
\begin{definition}
A {\it reverse plane partition} (RPP) of shape $\lambda/\mu$ is a filling of its boxes with positive integers that weakly increase both in rows from left to right and in columns from top to bottom.
Let $RPP(\lambda/\mu)$ be the set of RPP of shape $\lambda/\mu$. For $T \in RPP(\lambda/\mu)$, define the monomial $x^T = \prod_{i \ge 1} x_i^{c_i},$ where $c_i$ is the number of columns that contain $i$. 
Recall also that $c(\lambda/\mu)$ denotes the number of columns of $\lambda/\mu$.
\end{definition}

\begin{definition}[\cite{lp}]
The {\it dual symmetric Grothendieck polynomials} $g^{}_{\lambda/\mu}$ are  defined as follows:
$$
g^{}_{\lambda/\mu} = g^{}_{\lambda/\mu}(x_1, x_2, \ldots) := \sum_{T \in RPP(\lambda/\mu)} 
x^T  
$$
\end{definition}

Note that $g^{}_{\lambda} = s_{\lambda} + \{\text{lower degree elements}\}$ since the largest weight monomials from $\mathrm{RPP}$ correspond to SSYT of shape $\lambda$, and hence $\{g^{}_{\lambda}\}$ is a basis of $\Lambda$. The basis $\{ g^{}_{\lambda}\}$ is in fact {\it dual} to $\{ {G}^{}_{\lambda}\}$ via the Hall inner product for which Schur functions form an orthonormal basis. 

Similarly as for $G_{\lambda}$, there is an involutive automorphism $\hat\tau : \Lambda \to \Lambda$ given on generators by 
$$
\hat\tau : h_{n} \longmapsto \sum_{i = 1}^{n} 
\binom{n - 1}{i-1} e_i 
$$
and for which \cite{dy2}
$$
\hat\tau(g_{\lambda/\mu}) = g_{\lambda'/\mu'}.
$$

\begin{proposition}[\cite{dy2}]
The following branching formula holds
\begin{equation}\label{brg}
	g^{}_{\lambda/\mu}(\mathbf{x}, \mathbf{y}) = \sum_{\mu \subset \nu \subset \lambda} g^{}_{\lambda/\nu}(\mathbf{x}) g^{}_{\nu/\mu}(\mathbf{y})
\end{equation}
For a single variable $x$ we have
\begin{equation}
	g^{}_{\lambda/\mu}(x) = 
		\begin{cases}
			x^{c(\lambda/\mu)}, & \text{ if } \mu \subset \lambda;\\
			0, & \text{ otherwise.}
		\end{cases}
\end{equation}
\end{proposition}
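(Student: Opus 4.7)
The plan is to prove both formulas by a weight-preserving bijection on reverse plane partitions, with the one technical point being the column-based weighting of $g$.

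For the single-variable formula, I would note that $RPP(\lambda/\mu)$ is empty unless $\mu\subset\lambda$; when $\mu\subset\lambda$, the only RPP in one variable assigns the value $1$ to every cell, and then $1$ appears in exactly $c(\lambda/\mu)$ nonempty columns, producing the monomial $x^{c(\lambda/\mu)}$.

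For the branching formula, I would adopt the convention that $\mathbf{y}$-letters are strictly smaller than $\mathbf{x}$-letters. Given $T\in RPP(\lambda/\mu)$ in the variables $(\mathbf{x},\mathbf{y})$, let $\nu/\mu$ denote the subset of cells of $\lambda/\mu$ carrying $\mathbf{y}$-entries. The weak row and column increase, together with $\mathbf{y}<\mathbf{x}$, forces this subset to be downward-closed in the cell poset, so $\nu$ is a partition with $\mu\subset\nu\subset\lambda$. The restrictions $T|_{\nu/\mu}$ and $T|_{\lambda/\nu}$ are themselves RPPs in the $\mathbf{y}$ and $\mathbf{x}$ variables respectively, and the assignment $T\mapsto(T|_{\lambda/\nu},T|_{\nu/\mu})$ is a bijection $RPP(\lambda/\mu)\leftrightarrow\bigsqcup_{\nu}RPP(\lambda/\nu)\times RPP(\nu/\mu)$; the inverse glues two RPPs along the $\nu$-boundary, which is valid precisely because $\mathbf{y}<\mathbf{x}$.

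The essential step is the check that this bijection preserves weight. Because $g$ is weighted by columns (not cells), I must verify that for each letter $y_j$ the columns of $\lambda/\mu$ containing $y_j$ in $T$ correspond exactly to the columns of $\nu/\mu$ containing $y_j$ in $T|_{\nu/\mu}$, and analogously for each $x_i$ with columns of $\lambda/\nu$. This holds because every cell of $T$ carrying a $\mathbf{y}$-value lies in $\nu/\mu$, so the set of column indices witnessing $y_j$ is the same whether computed inside $\lambda/\mu$ or inside $\nu/\mu$; any column of $\lambda/\mu$ straddled by the $\nu$-boundary contributes its upper portion as a column of $\nu/\mu$ and its lower portion as a column of $\lambda/\nu$, so each letter's column count is captured on precisely one side. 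Consequently $\mathbf{x}^{T}\mathbf{y}^{T}=\mathbf{x}^{T|_{\lambda/\nu}}\cdot\mathbf{y}^{T|_{\nu/\mu}}$, and summing over $T$ and grouping by $\nu$ gives the stated branching formula.

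The main obstacle is exactly this column bookkeeping, which is what distinguishes the $g$-branching from the trivially cell-additive Schur branching; once it is dealt with, the bijection and the single-variable formula give both assertions immediately.
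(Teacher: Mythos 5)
Your argument is correct, and it is worth noting that the paper itself offers no proof of this proposition: it is imported from the reference [Yel19] (\cite{dy2}), so there is nothing internal to compare against. Your direct bijective proof handles exactly the right issues. The single-variable case is immediate as you say (only the all-$1$ filling survives, contributing $x^{c(\lambda/\mu)}$, and $RPP(\lambda/\mu)=\varnothing$ unless $\mu\subset\lambda$). For the branching formula, the decomposition of an RPP into its $\mathbf{y}$-part $\nu/\mu$ and $\mathbf{x}$-part $\lambda/\nu$, the gluing inverse, and above all the column-weight bookkeeping (each letter's witnessing columns are computed entirely on one side of the $\nu$-boundary, so a straddled column splits cleanly into a $\nu/\mu$-column and a $\lambda/\nu$-column) are precisely the points that distinguish this from the cell-additive Schur branching, and you verify them correctly. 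The one step you should make explicit is your ordering convention: declaring the $\mathbf{y}$-letters smaller than the $\mathbf{x}$-letters only computes $g_{\lambda/\mu}(\mathbf{x},\mathbf{y})$ if the RPP generating function is independent of how the combined alphabet is totally ordered, i.e.\ it uses the symmetry of $g_{\lambda/\mu}$ (equivalently, with the standard order $\mathbf{x}<\mathbf{y}$ your bijection yields $\sum_{\nu} g_{\nu/\mu}(\mathbf{x})\,g_{\lambda/\nu}(\mathbf{y})$, and one invokes symmetry to swap the roles and obtain the stated form). Since symmetry of $g_{\lambda/\mu}$ is established in [LP07] and is part of the ambient setup here, this is a citation to add rather than a gap.
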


\begin{definition}[Dual Grothendieck-positive specializations]
A homomorphism $\rho : \Lambda \to \mathbb{R}$ is called {\it $g$-positive} if $\rho(g^{}_{\lambda/\mu}) \ge 0$ for all $\lambda, \mu$.
\end{definition}

We describe a class of $g$-positive specializations as follows:
\begin{proposition}\label{ggg}
Let $\rho : \Lambda \to \mathbb{R}$ be a specialization given by 
\begin{equation}\label{eqq1}
\rho (H(z)) = 1+ \sum_{n = 1}^{\infty} \rho(h_n)\, z^n = e^{\gamma z + \delta z/(1 - z)} \prod_{n = 1}^{\infty} \frac{1}{1 - \alpha_n z} \prod_{n = 1}^{\infty} \left(1 + \frac{\beta_n z}{1 -  z} \right)
\end{equation}
for nonnegative reals  
$\{\alpha_n\}$,  $\{\beta_n\}$,  
$\gamma, \delta$ 
such that $\sum_{n} (\alpha_n + \beta_n) < \infty$. Then $\rho$ is $g$-positive.
\end{proposition}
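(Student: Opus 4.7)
The plan is to decompose $\rho$ into a union of basic $g$-positive specializations, mirroring the structure of Theorem~\ref{etfact}. Two preliminaries are needed: an analogue of Lemma~\ref{fact}, so that factorizations $\rho(H(z)) = \rho_1(H(z))\cdot \rho_2(H(z))$ correspond to unions $\rho = \rho_1 \cup \rho_2$ (immediate using power sums); and the fact that unions of $g$-positive specializations remain $g$-positive, which follows from the branching formula~\eqref{brg}, since $g_{\lambda/\mu}(\rho_1\cup\rho_2) = \sum_\nu g_{\lambda/\nu}(\rho_1)\,g_{\nu/\mu}(\rho_2)$ is a sum of products of nonnegative terms. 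Hence it suffices to realize each factor in the parametrization as the generating function $\rho'(H(z))$ of a basic $g$-positive specialization $\rho'$.

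I would identify four such basic generators. The factor $(1-\alpha z)^{-1}$ corresponds to the single-variable specialization $\phi_\alpha$, whose $g$-positivity is immediate from the single-variable formula $g_{\lambda/\mu}(\alpha) = \alpha^{c(\lambda/\mu)}[\mu\subset\lambda]\geq 0$. The factor $e^{\gamma z}$ corresponds to the Plancherel specialization $\pi_\gamma = \lim_{N\to\infty} \phi_{\gamma/N}^{\cup N}$, whose $g$-positivity passes to the limit (verifying convergence as in Lemma~\ref{sext}). The factors $1+\beta z/(1-z)$ and $e^{\delta z/(1-z)}$ are handled by the key observation that composing with the involution $\hat\tau$ produces the required ``dual'' generators. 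Indeed, using $\hat\tau(h_n)=\sum_i\binom{n-1}{i-1}e_i$, a geometric-series calculation yields
$$\hat\tau(H(z)) \;=\; 1 + \sum_{i\geq 1} e_i\,\bigl(z/(1-z)\bigr)^i \;=\; E\bigl(z/(1-z)\bigr),$$
and applying $\phi_\beta$ gives $\phi_\beta(E(w)) = 1+\beta w$, hence $(\phi_\beta\circ\hat\tau)(H(z)) = 1+\beta z/(1-z)$. Thus the specialization $\psi_\beta := \phi_\beta \circ \hat\tau$ realizes the factor $1+\beta z/(1-z)$, and similarly $\tilde\pi_\delta := \pi_\delta \circ \hat\tau$ realizes $e^{\delta z/(1-z)}$ (or equivalently as $\lim_N\psi_{\delta/N}^{\cup N}$).

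The main technical observation, which makes everything work, is that $\hat\tau$ transports $g$-positivity through the identity $\hat\tau(g_{\lambda/\mu}) = g_{\lambda'/\mu'}$:
$$\psi_\beta(g_{\lambda/\mu}) \;=\; \phi_\beta\bigl(g_{\lambda'/\mu'}\bigr) \;=\; \beta^{c(\lambda'/\mu')}[\mu\subset\lambda] \;=\; \beta^{r(\lambda/\mu)}[\mu\subset\lambda] \;\geq\; 0,$$
and likewise $\tilde\pi_\delta(g_{\lambda/\mu}) = \pi_\delta(g_{\lambda'/\mu'})\geq 0$. Assembling the four basic generators as a union with parameters $\{\alpha_n\}, \{\beta_n\}, \gamma, \delta$ reconstructs the given $\rho(H(z))$, and the $g$-positivity of $\rho$ follows. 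The main obstacle I expect is purely a bookkeeping matter: verifying convergence of the infinite union and of the Plancherel/dual-Plancherel limits under the hypothesis $\sum(\alpha_n+\beta_n)<\infty$, which proceeds along the same lines as in the Schur-positive case and in Lemma~\ref{sext}.
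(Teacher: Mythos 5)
Your proposal is correct and follows essentially the same route as the paper's proof: decompose $\rho$ as a union of the four basic generators $\phi_{\alpha_n}$, $\psi_{\beta_n}=\phi_{\beta_n}\circ\hat\tau$, $\pi_\gamma$, $\tilde\pi_\delta=\pi_\delta\circ\hat\tau$, matching the factors of \eqref{eqq1}, and use the branching formula \eqref{brg} to see that unions preserve $g$-positivity, with $\hat\tau(g_{\lambda/\mu})=g_{\lambda'/\mu'}$ transporting positivity to the dual generators. The only minor deviation is your justification that $\pi_\gamma$ is $g$-positive via the limit of $\phi_{\gamma/N}^{\cup N}$ (which works, since $g_{\lambda/\mu}\in\Lambda$ is monomial-positive and of bounded degree), whereas the paper simply invokes the Schur-positivity of $g_{\lambda/\mu}$ (\cite{gal}, \cite{lp}) to get $\pi_\gamma(g_{\lambda/\mu})\ge 0$.
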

\begin{proof}
First, note that $g_{(n)} = h_n$. 
By the branching formula \eqref{brg}, we obtain that union of  $g$-positive specializations is also $g$-positive. 
Let $\alpha,\beta,\gamma, \delta \ge 0$. The following specializations are all $g$-positive:
\begin{itemize}
\item[(a)] $\phi_{\alpha} : (x_1, x_2, \ldots) \mapsto (\alpha, 0, 0, \ldots)$. 
Here $\phi_{\alpha}(g_{\lambda/\mu}) = \alpha^{c(\lambda/\mu)} \ge 0$ 
and we have 
$$\phi_\alpha(h_{n}) = \alpha^n, \qquad \phi_{\alpha}(H(z)) = \frac{1}{1 - \alpha z}$$
\item[(b)] $\psi_{\beta} := \phi_{\beta} \circ \hat\tau$. Here 
$\psi_{\beta}(g_{\lambda/\mu}) = \beta^{c(\lambda'/\mu')} \ge 0$ 
and we have 
$$\psi_\beta(h_{n}) = \beta, \qquad \psi_{\beta}(H(z)) = 1 + \frac{\beta z}{1 - z}$$
\item[(c)] The {Plancherel specialization} $\pi_{\gamma}$: $p_1 \mapsto \gamma$ and $p_{k} \mapsto 0$ for $k \ge 2$. Since the polynomials $g_{\lambda/\mu}$ are Schur-positive (see \cite{gal}, also \cite{lp} for straight shapes), we have $\pi_{\gamma}(g_{\lambda/\mu}) \ge 0$ and 
here  $$\pi_{\gamma} (H(z)) = e^{\gamma z}.$$
\item[(d)] The {\it dual} Plancherel specialization $\tilde \pi_{\delta} := \pi_{\delta} \circ \hat\tau$. Here $\tilde \pi_{\delta}(g_{\lambda/\mu}) = \pi_{\delta}(g_{\lambda'/\mu'}) \ge 0$ and we have $$\tilde \pi_{\delta}(H(z)) = e^{\delta z/(1 - z)}.$$
\end{itemize}
Now we can define the specialization $\rho$ as the union of specializations $\pi_{\gamma}$, $\tilde\pi_{\delta}$, $(\phi_{\alpha_n})$, $(\psi_{\beta_n})$ which gives $g$-positivity. 
\end{proof}

We conjecture that the converse is also true.
\begin{conjecture}
Every $g$-positive specialization is characterized by \eqref{eqq1}.
\end{conjecture}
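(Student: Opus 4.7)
The plan is to adapt the Vershik--Kerov ring/boundary framework that the author develops for $G$-positivity (the ``ring theorem'' for $\widetilde{\mathbb{Y}}$) to the dual $g$-basis, and then close out the classification via a suitable generalization of the Edrei--Thoma factorization.

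First, I would transplant the ring-theorem machinery to $\Lambda$ with the $g$-basis. Define the positive cone $\Lambda_+^g := \bigoplus_\lambda \mathbb{R}_{\ge 0}\, g_\lambda \subset \Lambda$, which is closed under multiplication by the Lam--Pylyavskyy Littlewood--Richardson rule for the dual basis (so that $g_\mu \cdot g_\nu$ expands positively in $\{g_\lambda\}$). Introduce the filtered graph $\mathbb{Y}^g$ whose vertices are partitions and whose arcs encode the Pieri rule for $g_{(1)} = h_1$ acting on $\{g_\lambda\}$. By rerunning the proof of the ring theorem verbatim --- using that $\Lambda_+^g$ is multiplication-closed and that $(g_{(1)})^{|\lambda|} - g_\lambda \in \Lambda_+^g$ (a consequence of the Pieri rule) --- one obtains that the extreme points of the convex set of normalized $g$-positive specializations of $\Lambda$ coincide with the multiplicative $g$-positive homomorphisms. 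Choquet's theorem then reduces the conjecture to classifying these multiplicative $g$-positive homomorphisms.

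Second, I would characterize those multiplicative homomorphisms $\rho : \Lambda \to \mathbb{R}$ with $\rho(g_\lambda) \ge 0$ for all $\lambda$. The natural route is to translate $g$-positivity into a modified total-positivity condition on $\rho(H(z))$, via a Jacobi--Trudi-type determinantal formula for $g_\lambda$ in terms of a suitable deformation of the complete homogeneous generators. The target is a four-parameter factorization analogous to Edrei--Thoma, producing the factors $(1-\alpha_n z)^{-1}$, $1 + \beta_n z/(1-z)$, $e^{\gamma z}$, $e^{\delta z/(1-z)}$, with $\sum(\alpha_n+\beta_n)<\infty$. A promising change of variable is $w = z/(1-z)$: under this substitution the factor $e^{\delta z/(1-z)}$ becomes $e^{\delta w}$ and $1 + \beta_n z/(1-z)$ becomes $1 + \beta_n w$, raising hope that after appropriate normalization the problem reduces to classical total positivity of an associated Toeplitz matrix so that the classical Edrei--Thoma theorem can be invoked.

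The main obstacle is precisely this second step. Unlike the $G$-positive case, where Theorem~\ref{fgplus} provides an explicit Grothendieck-positive lifting of the Schur Jacobi--Trudi minors and thereby reduces $G$-positivity to Schur-positivity of an auxiliary Toeplitz matrix, no analogous ``$g$-positive determinantal lifting'' is presently available that would directly yield the required modified total positivity. The extra $\delta$-parameter (the dual Plancherel $\tilde\pi_\delta$, tied to column growth) and the non-Schur-positive generators $\psi_\beta$ have no counterpart in classical Aissen--Schoenberg--Whitney theory, so a genuinely new positivity criterion is needed. Moreover, justifying the summability $\sum(\alpha_n+\beta_n) < \infty$ --- and convergence of the infinite products in \eqref{eqq1} at $z = 1$, where the generating function has a potential essential singularity --- appears to require new asymptotic estimates on $\rho(g_\lambda)$ for sequences $\lambda$ with growing rows and columns, presumably in the spirit of Vershik--Kerov's asymptotic interpretation of the Edrei--Thoma parameters via normalized row and column lengths. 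These difficulties are presumably what lead the author to leave this only as a conjecture.
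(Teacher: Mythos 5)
There is a genuine gap: what you have written is a research plan, not a proof, and you say so yourself. The statement is an open conjecture in the paper --- the author proves only the easy direction (Proposition~\ref{ggg}, that every specialization of the form \eqref{eqq1} is $g$-positive) and offers no argument for the converse --- so the entire content of the conjecture is precisely the step you defer: showing that a $g$-positive homomorphism $\rho:\Lambda\to\mathbb{R}$ forces the factorization \eqref{eqq1}. Your second stage openly acknowledges that no analogue of Theorem~\ref{fgplus} (a positive determinantal lifting reducing the problem to classical total positivity) is available, that the substitution $w=z/(1-z)$ is only a ``hope,'' and that the summability and convergence issues at $z=1$ would need new estimates. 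Identifying where the difficulty lies is not the same as resolving it, so no part of the converse is actually established.

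Moreover, your first stage does not buy what you claim. In the paper a $g$-positive specialization is \emph{by definition} a ring homomorphism of $\Lambda$ (with $\rho(g_{\lambda/\mu})\ge 0$ for all skew shapes, not only straight ones), so the Vershik--Kerov ring-theorem machinery --- whose purpose is to identify the extreme points of a convex set of merely \emph{linear} harmonic functionals with the multiplicative ones --- does not reduce the problem at all: you would simply be led back to the task of classifying multiplicative $g$-positive maps, which is the conjecture itself. (The cone computations you invoke, e.g.\ positivity of the structure constants of $\{g_\lambda\}$ and $(g_{(1)})^{|\lambda|}-g_\lambda\in\Lambda_+^g$, are plausible and could be justified via the dual Littlewood--Richardson rule and a top-degree comparison, but they are scaffolding for a reduction that is not needed here.) As it stands, the proposal should be regarded as a discussion of possible strategies for the open problem, not a proof of the statement.
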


\begin{remark}
Equivalently, $g$-positive specializations $\rho$ in Prop.~\ref{ggg} are given by 
\begin{align*}
&\rho : p_1 \longmapsto \gamma + \delta + \sum_{n} (\alpha_n + \beta_n)\\
&\rho : p_k \longmapsto  \delta k + \sum_{n} \left(\alpha_n^k + \sum_{\ell = 1}^{k} (-1)^{\ell} \binom{k - 1}{\ell - 1} \beta_n^{\ell}\right) \qquad k \ge 2.
\end{align*}
\end{remark}
\begin{remark}
The set of $g$-positive specializations is {\it larger} than the set of Schur-positive specializations. Since the functions $g_{\lambda/\mu}$ are Schur-positive, every Schur-positive specialization is $g$-positive as well. However the converse is not true. For example, take the $g$-positive specialization $\rho = \psi_{\beta} : h_n \mapsto \beta \in  (0,1)$ for all $n \ge 1$. Then $\rho(s_{(1^2)}) = \rho(e_2) = \rho(h_{1}^{2} - h_2) = \beta^2 - \beta < 0$.
\end{remark}

\section{Two analogues of the Plancherel measure on partitions}
\subsection{Corner growth model}
Define a sequence of random partitions (or more concretely, their corresponding Young diagrams) $(\lambda^{(1)}, \lambda^{(2)}, \ldots)$ such that $\lambda^{(1)} = (1)$ and $\lambda^{(n+1)}$ is obtained from $\lambda^{(n)}$ by adding a box in one of its {\it outer corners} with equal probability, i.e. with probability $1/\#\{\text{outer corners}\}$. This Markov process is known as the {\it corner growth model} which can be viewed as a TASEP (see e.g. \cite{romik}). 
Let 
$$
p_n(\lambda) := \mathrm{Pr}(\lambda^{(n)} = \lambda). 
$$
As $n \to \infty$, the sequence $\{ \lambda^{(n)}\}$ rescaled by $1/\sqrt{n}$ has the following parabolic limit shape
\footnote{ Formally, for any $\epsilon \in (0,1)$
$$
\mathrm{Pr}\left((1 - \epsilon)\omega \subset [\lambda^{(n)}] \subset (1 + \epsilon)\omega \right) \to 1 \text{ as } n \to \infty,
$$
where $\omega = \{(x,y) : x,y \ge 0, \sqrt{x} + \sqrt{y} \le 6^{1/4} \}$ and $[\lambda^{(n)}]$ is the region under graph of partitions $\lambda^{(n)}$ rescaled by $1/\sqrt{n}$ in both directions and drawn in French notation (upside down English).
For more background on the corner growth model, its analysis, and connections with RSK, see \cite{romik}.}
$$
\sqrt{x} + \sqrt{y} = 6^{1/4}.
$$
This result was first proved by Rost \cite{rost}. 
The model was further analyzed by Johansson \cite{joh1} (cf. \cite{joh2}) making connections with RSK and {\it longest increasing subsequences} in generalized permutations.

Interestingly, this process is related to dual Grothendieck polynomials as follows. 
Let $\rho : \Lambda \to \mathbb{R}$ be a normalized $g$-positive specialization, i.e. $\rho(g_{(1)}) = 1$. 
Then it is not difficult to show that $$\mu_{\rho,n}(\lambda) := p_n(\lambda)\, \rho(g_{\lambda})$$ is a probability measure on the set of partitions $\lambda \vdash n$. 

In particular, take the specialization $\rho(g_{\lambda}) = g_{\lambda}(1) = 1$, then as $n \to \infty$, partitions with respect to the measure $\mu_{\rho,n}(\lambda) = p_n(\lambda)$ have the above parabolic limit shape. 
Now we pose the same question for the Plancherel specialization $\rho = \pi$: $p_1 \mapsto 1$ and $p_k \mapsto 0$ for $k \ge 2$. What is limit shape of partitions with respect to the measure
$\mu_{\rho,n}(\lambda) = p_n(\lambda)\, \pi(g_{\lambda})$?
\begin{remark}
The probabilities $p_n(\lambda)$ serve here as analogues of dimensions $f^{\lambda}$ for the classical Schur case with the Plancherel measure $f^{\lambda}\, \pi(s_{\lambda}) = (f^{\lambda})^2 /{n!}$.
\end{remark}
\begin{remark}
We address some further connections between the corner growth model and dual Grothendieck polynomials in \cite{dy3}.
\end{remark}
\subsection{Plancherel-Hecke measure} 
Let us introduce two more types of tableaux: 

An {\it increasing tableau} is a filling of a Young diagram with positive integers strictly increasing both in rows and columns. Denote by $d^{\lambda}(n)$ the number of increasing tableaux of shape $\lambda$ filled with numbers from the set $\{1, \ldots, n \}$.  

A {\it standard set-valued tableau} (SSVT) is a filling of a Young diagram with sets of positive integers so that entries strictly increase in both  rows and columns. Denote by $e^{\lambda}(m)$ the number of SSVT filled with numbers $\{1, \ldots, m \}$.

Let $\varphi : \Gamma \to \mathbb{R}$ be a $G$-positive specialization of $\Gamma$. Then it is not difficult to show that 
$$
M_{\varphi, n}(\lambda) := \frac{d^{\lambda}(n)\, \tilde G_{\lambda}(\varphi)}{\Delta^n}, \qquad \Delta := 1 + \tilde G_{(1)}(\varphi)
$$
is a probability measure on the set of partitions $\lambda  \subset \delta_n := (n, n-1, \ldots, 1)$.

In particular, take the 
Plancherel specialization $\varphi = \hat\pi_\gamma$. Then 
we can derive that 
$$
\tilde G_{\lambda}(\hat\pi_\gamma) = \sum_{m} \frac{\gamma^m}{m!} \, e^{\lambda}(m)
$$
and the measure $M_{\varphi, n}(\lambda)$ specializes to the {\it Plancherel-Hecke measure} $\mu_{m,n}(\lambda)$ studied by Thomas and Yong in \cite{thomasyong}. 
Namely, we have 
$$\mu_{m,n}(\lambda) = \frac{d^{\lambda}(n)\, e^{\lambda}(m)}{n^m}$$ is a probability measure on the set of partitions $\lambda \subset \delta_n$ and $|\lambda| \le m$. This result was obtained from a $K$-theoretic extension of RSK, the {\it Hecke insertion algorithm} \cite{bksty, thomasyong2}. 
The measure is also naturally related to {longest increasing subsequences} of words of length $m$ in the alphabet $\{1, \ldots, n \}$. A special case of the conjecture made in \cite{thomasyong} is that when $n = \Theta(m^{\alpha})$ for $\alpha > 1/2$, the partitions with respect to this measure have the same limit shape as for the Plancherel measure 
obtained by Vershik and Kerov \cite{vkp}, and Logan and Shepp \cite{ls}.  

\section*{Acknowledgements}
I am grateful to Askar Dzhumadil'daev, Igor Pak, Leonid Petrov, and Pavlo Pylyavskyy for many helpful conversations. I am also grateful to the referee for helpful remarks.


\end{document}